\newcommand{\cbraces}[1]{\left( #1 \right)}
\newcommand{\braces}[1]{\left\{ #1 \right\}}
\definecolor{bgcolor}{rgb}{0.8,1,1}
\definecolor{bgcolor2}{rgb}{0.8,1,0.8}
\newcommand{\myred}[1]{{\color{red}#1}}
\newcommand{\myblue}[1]{{\color{blue}#1}}
\crefname{assumption}{assumption}{assumptions}
\newtheorem{theorem}{Theorem}
\newtheorem{lemma}{Lemma}
\newtheorem{remark}{Remark}
\newtheorem{assumption}{Assumption}
\newcommand{\R}{\mathbb{R}}
\def\<#1,#2>{\langle #1,#2\rangle}
\newcommand{\norm}[1]{\|#1\|}
\newcommand{\sqn}[1]{\norm{#1}^2}
\newcommand{\cO}{\mathcal{O}}
\newcommand{\eqdef}{\coloneqq}
\DeclareMathOperator*{\argmin}{arg\,min}
\title{Optimal Gradient Sliding and its Application to Distributed Optimization Under Similarity}
\author{%
  Dmitry Kovalev\\
  KAUST\\
  \texttt{dakovalev1@gmail.com}
  \And
  Aleksandr Beznosikov\\
  MIPT\\
  \texttt{anbeznosikov@gmail.com}
  \And
  Ekaterina Borodich\\
  MIPT\\
  \texttt{borodich.ed@phystech.edu}
  \And
  Alexander Gasnikov\\
  MIPT\\
  \texttt{gasnikov@yandex.ru}
  \And
  Gesualdo Scutari\\
  Purdue University \\
  \texttt{gscutari@purdue.edu}
}
\begin{document}

\maketitle

\begin{abstract}
We study structured convex  optimization problems, with additive objective   $r:=p + q$, where $r$ is ($\mu$-strongly) convex, $q$ is $L_q$-smooth and convex, and $p$ is $L_p$-smooth,   possibly nonconvex. For such a class of problems, we proposed   an inexact accelerated gradient sliding   method  that 
can skip the gradient computation for one of these   components while still achieving optimal   complexity of gradient calls of $p$ and $q$, that is, 
 $\mathcal{O}(\sqrt{L_p/\mu})$   and $\mathcal{O}(\sqrt{L_q/\mu})$, respectively.   This result is much sharper than the classic black-box  complexity $\mathcal{O}(\sqrt{(L_p+L_q)/\mu})$,   especially when  the difference between $L_q$ and $L_q$ is large. We then apply the proposed method to solve distributed optimization problems over master-worker architectures, under agents' function similarity, due to statistical data similarity or
otherwise. The distributed algorithm achieves for the first time lower complexity bounds on {\it both} communication and local  gradient calls, with the former having being a long-standing open problem. Finally the method is extended to distributed saddle-problems (under function similarity)   by means of solving a class of variational inequalities, achieving lower communication and computation   complexity bounds.
\end{abstract}

\section{Introduction}
We consider structured convex programming in the form   \cite{beck2009fast,duchi2010composite,nesterov2013gradient}:
\begin{equation}
\label{eq:main}
	\min_{x \in \R^d}  r(x) \eqdef q(x) + p(x),
\end{equation}
where  $r$ is assumed to be  convex and decomposed as the sum of a smooth, possibly nonconvex  function $p$ and a smooth convex function $q$. First order information of $p$ and $q$ is accessible separately. We are interested in scenarios where the cost of evaluating   the gradient of the two functions is not even, but computing, say  $\nabla p$, is much more resource demanding than $\nabla q$. The motivating application for this scenario is   distributed optimization  over master-worker systems, as discussed next.

Consider the following distributed optimization problem over a network of $m$ agents:
\begin{equation}\label{eq:main2}
	\min_{x\in\R^{d}} r(x) = \frac{1}{n}\sum_{i=1}^n f_i(x),
\end{equation}
where $f_i$ is the loss function of agent $i$, assumed to be convex, which is not known to other agents. Agents are embedded in a star-topology, with agent $1$ being the master node, without loss of generality--this is the typical federated learning setup \cite{kairouz2021advances}.  An instance of (\ref{eq:main2}) of particular interest is the empirical risk minimization (ERM) whereby the goal is to minimize the average loss over some dataset, distributed across the nodes of the network, with $f_i$ being the empirical risk of agent $i$, i.e.,   $f_i(x)=\frac{1}{m}\sum_{j=1}^m \ell\big(x;z_i^{j}\big)$, 
  where   $z_i^{(1)},\ldots, z_i^{(m)}$ is the set of $m$ samples owned by agent $i$, and   $\ell(x;z_i^{j})$  measures the mismatch between the   parameter $x$ and the sample $z_i^{j}$.
  
Several solutions methods have been proposed to solve (\ref{eq:main2}); the prototype approach consists in interleaving local computations at the workers sides (nodes $i=1,\ldots n$) with communications to/from the master node ($i=1$), 
which maintains and updates the authoritative copy of the optimization variables, producing eventually the final solution estimate.  
Since the cost of communications is often  the bottleneck in distributed computing  (e.g., \cite{Bekkerman_book11,Lian17}), a lot of research has been devoted to designing distributed algorithms that are \emph{communication efficient}. Acceleration (in the sense of Nesterov) has been extensively investigated as a procedure to reduce the communication burden.  For $L$-smooth and $\mu$-strongly convex functions $r$ in (\ref{eq:main2}), linear convergence    is certified by employing first-order methods, with  computation (gradient evaluations) and communication complexities  proportional to  $\sqrt{\kappa}$   ($\kappa\triangleq L/\mu$ is the condition number of $r$). For ill-conditioned functions ($\kappa$ very large), the polynomial dependence on $\kappa$  may be   unsatisfactory.  This is, e.g., the typical setting of many ERM problems wherein the optimal regularization parameter   for test predictive performance is very small.   
   
   Further improvements on the communication complexity can be obtained exploiting the extra structure typical in ERM problems, also known as   {\it function similarity} (see, e.g.,  \cite{arjevani2015communication, pmlr-v37-zhangb15, stich2018local, woodworth2020local}): 
  $\|\nabla^2 f_i(x) - \nabla^2 f_j\|\leq  {\delta}$,
 for all $x$ in a proper domain of interest and all   $i\neq j=1,\ldots, n$, where $\delta>0$ measures the degree of similarity between the  Hessian matrices of  the  local  losses. When data are i.i.d. among agents,    $f_i$'s   reflect  statistical similarities in local data, resulting in  $\delta=\tilde{O}(1/\sqrt{m})$ with high-probability   ($\tilde{O}$ hides log-factors and dependence on  $d$). In this scenario, in general,  $1+\delta/\mu\ll\kappa$ \cite{arjevani2015communication}. This motivated a surge of studies aiming at exploiting function similarity coupled with acceleration to boost communication efficiency  (see Sec.~\ref{sec:related_works} for an overview of relevant works):   linear convergence is certified with a  number of communication steps (for nonquadratic losses) scaling   with $\widetilde{\mathcal{O}}( \sqrt{\delta/\mu})$, where $\widetilde{\mathcal{O}}$ hides log-factors. This matches   lower (communication)  complexity bounds \citep{arjevani2015communication} {\it only up to log-factors}. Furthermore, these methods   are not computationally optimal, yielding  local gradients calls larger than lower complexity bounds $\mathcal{O}(\sqrt{\kappa})$. In fact,   Table~\ref{tab:comparison0} shows that, to the date, there exists no distributed algorithm   achieving the best of the two worlds, that is, optimal (lower bound)  communication complexity {\it and}   local gradient (oracle) complexity.  
 
 This paper fills this gap. 
 Our starting point is the reformulation of (\ref{eq:main2}) in the  equivalent form    \vspace{-0.1cm} \begin{equation}\label{eq:pq}
	\min_{x\in\R^{d}}  r(x) = \underbrace{f_1(x)}_{:=q(x)} +  \underbrace{\frac{1}{n}\sum_{i=1}^n [f_i(x) - f_1(x)]}_{:=p(x)},
\end{equation}
 which  exploits function similarity at the agents' side via preconditioning.  Problem (\ref{eq:pq}) is an instance of (\ref{eq:main}):  all $f_i$ (thus $q$) are convex but  $p$ is   nonconvex. Also, evaluating $\nabla q$ and $\nabla p$ has  different costs; the former involves only local computations at the master node while the latter requires communications from/to master and workers nodes. At high-level the idea is then clear:   one would like to design a distributed algorithm for (\ref{eq:pq}) [or more generally for \eqref{eq:main}] that skips      gradient computations of $\nabla p$ (saving thus communications) without slowing down the overall optimal rate of convergence. 
 
 This naturally suggests the use of {\it gradient-sliding} techniques 
  \cite{gorbunov2019optimal, sadiev2021decentralized, borodich2021decentralized}, yielding algorithms that 
   skip from time to time   computation of the gradient of one function in the summand objective. However, existing gradient-sliding algorithms are not applicable to   \eqref{eq:main} [and thus (\ref{eq:pq})]  because  they all require $p$ and $q$ to be {\it convex}.  This calls for new designs, accounting for the nonconvexity of $p$. \vspace{-0.2cm}

\subsection{Main contributions}\vspace{-0.1cm}
Our contribution is threefold: 

$\bullet$ \textbf{A new gradient-sliding algorithm for \eqref{eq:main}:}  We propose     a new Accelerated ExtraGradient sliding method that skips the computation of  $\nabla p$   from time to time. 
The method builds on an inexact  acceleration of a  proximal envelop (outer-loop) coupled with a suitable termination criterion and inner-loop algorithm to approximately solve the proximal subproblem. When applied to  \eqref{eq:main}, with   $r$ being  ($\mu$-strongly) convex,  $q$ being $L_q$-smooth and convex, and  $p$ being  $L_p$-smooth (possibly nonconvex), the proposed  algorithm achieves   optimal complexity of  gradient calls of $q$ and $p$, that is, 
\renewcommand{\arraystretch}{2}
\begin{table*}[!h]
    \centering    
    \label{tab:1}   
    \scriptsize
    \begin{tabular}{|c|c|c|}
    \hline
    $r$& $\nabla q$ & $\nabla p$\\\hline 
    strongly convex & $\mathcal{O}\left( \sqrt{\frac{L_q}{\mu}} \log \frac{1}{\varepsilon} \right)$ & $\mathcal{O}\left( \sqrt{\frac{L_p}{\mu}} \log \frac{1}{\varepsilon} \right)$\\
    \hline 
    convex & $\mathcal{O}\left( \sqrt{\frac{L_q}{\varepsilon}} \|x^0 - x^* \| \right)$ & $\mathcal{O}\left( \sqrt{\frac{L_p}{\varepsilon}} \|x^0 - x^* \| \right)$ 
    \\\hline 
    \end{tabular}
\vspace{-0.5cm}
\end{table*}

Notice that the above complexity bounds are sharper than the complexity bound obtained by the  Nesterov's optimal first-order method for smooth (strongly) convex optimization applied to \eqref{eq:main}. For instance for strongly convex $r$, that would yield    $\nabla q$, $\nabla p$ complexity scaling as $\mathcal{O}(\sqrt{(L_q+L_p)/\mu})$, which is less favorable than our   separate complexity bounds above. To the best of our knowledge, this is the first time that such bounds are achieved for nonconvex $p$.

$\bullet$ \textbf{Optimal complexity bounds for \eqref{eq:main2}  under function similarity: } We   customize the proposed accelerated gradient-sliding algorithm to the distributed optimization problem \eqref{eq:main2} under $\delta$-function similarity. 
As showed in Table~\ref{tab:comparison0} for strongly convex $r$  (see   Sec.~\ref{sec:similar} for the case of weakly convex $r$),  the new  distributed algorithm achieves lower complexity   bounds on {\it both} the number of communications   \cite{arjevani2015communication} and   on the number of   gradient computations (without   logarithmic factors!) \cite{nesterov2018lectures}. Achieving optimal communication complexity (for nonquadratic losses)  was  a long-standing   open problem. 

$\bullet$ \textbf{Gradient sliding for variational inequalities:}  We extend the proposed  gradient-sliding machinery  to solve distributed saddle-points under similarity by means of solving a class of strongly-monotone Variational Inequalities (VI). We improve  existing complexity bounds for such problems \cite{beznosikov2021distributed} achieving for the first time both optimal communication complexity and  gradient oracle complexity--see Table~\ref{tab:comparison0}.\vspace{-0.2cm}


\renewcommand{\arraystretch}{2}
\begin{table*}[h]
    \centering
    \small
	\caption{Existing convergence results for  distributed (saddle point)  optimization   under $\delta$-similarity.}
    \label{tab:comparison0}   
    \scriptsize 
\resizebox{\linewidth}{!}{
  \begin{threeparttable}
    \begin{tabular}{|c|c|c|c|c|c|c|}
    \cline{3-7}
    \multicolumn{2}{c|}{}
     & \textbf{Reference} & \textbf{Communication complexity} & \textbf{Local gradient complexity} & \textbf{Order}  & \textbf{Limitations} \\
    \hline
    \multirow{13}{*}{\rotatebox[origin=c]{90}{\textbf{Minimization \quad\quad}}} & \multirow{11}{*}{\rotatebox[origin=c]{90}{\textbf{Upper}\quad\quad}}
    & \texttt{DANE} \cite{pmlr-v32-shamir14} & $\mathcal{O} \left( \myred{\frac{\delta^2}{\mu^2}} \log \frac{1}{\varepsilon} \right)$ & \myred{---} \tnote{{\color{blue}(2)}} & 1st & \myred{quadratic} 
    \\ \cline{3-7}
    && \texttt{DiSCO} \cite{pmlr-v37-zhangb15} & $\mathcal{O} \left( \sqrt{\frac{\delta}{\mu}} (\log\frac{1}{\varepsilon} + \myred{C^2 \Delta F_0})\myred{\log\frac{L}{\mu}}\right)$  & $\mathcal{O} \left( \sqrt{\frac{\myblue{\delta}}{\mu}} (\log\frac{1}{\varepsilon} + \myred{C^2 \Delta F_0})\myred{\log\frac{L}{\mu}}\right)$ & \myred{2nd}   & \myred{$C$ - self-concordant} \tnote{{\color{blue}(3)}}
    \\ \cline{3-7}
    && \texttt{AIDE} \cite{reddi2016aide} & $\mathcal{O} \left( \sqrt{\frac{\delta}{\mu}} \log \frac{1}{\varepsilon} \myred{\log \frac{L}{\delta}}\right)$ & $\mathcal{O} \left( \sqrt{\frac{\delta}{\mu}} \myred{\sqrt{\frac{L}{\mu}}} \log \frac{1}{\varepsilon} \myred{\log \frac{L}{\delta}}\right)$ \tnote{{\color{blue}(4)}} & 1st  & \myred{quadratic}
    \\ \cline{3-7}
    && \texttt{DANE-LS} \cite{yuan2019convergence} & $\mathcal{O} \left( \myred{\frac{\delta}{\mu}} \log \frac{1}{\varepsilon} \right)$ & $\mathcal{O} \left( \sqrt{\frac{L}{\mu}} \myred{\frac{\delta^{3/2}}{\mu^{3/2}}} \log \frac{1}{\varepsilon} \right)$ \tnote{{\color{blue}(5)}} & 1st/\myred{2nd}  & \myred{quadratic} \tnote{{\color{blue}(6)}}
    \\ \cline{3-7}
    && \texttt{DANE-HB} \cite{yuan2019convergence} & $\mathcal{O} \left( \sqrt{\frac{\delta}{\mu}} \log \frac{1}{\varepsilon} \right)$ & $\mathcal{O} \left( \sqrt{\frac{L}{\mu}} \myred{\frac{\delta}{\mu}} \log \frac{1}{\varepsilon} \right)$ \tnote{{\color{blue}(5)}} & 1st/\myred{2nd}  & \myred{quadratic} \tnote{{\color{blue}(6)}}
    \\ \cline{3-7}
    && \texttt{SONATA} \cite{sun2019distributed} & $\mathcal{O} \left( \myred{\frac{\delta}{\mu}} \log \frac{1}{\varepsilon} \right)$ & \myred{---} \tnote{{\color{blue}(2)}} & 1st & decentralized
    \\ \cline{3-7}
    && \texttt{SPAG} \cite{hendrikx2020statistically} & $\mathcal{O} \left( \sqrt{\myred{\frac{L}{\mu}}}\log \frac{1}{\varepsilon} \right)$ \tnote{{\color{blue}(1)}} &  \myred{---} \tnote{{\color{blue}(2)}} & 1st  & \myred{$M$ - Lipshitz hessian}
    \\ \cline{3-7}
    && \texttt{DiRegINA} \cite{daneshmand2021newton} & $\mathcal{O} \left( \myred{\frac{\delta}{\mu}} \log\frac{1}{\varepsilon} + \myred{\sqrt{\frac{M \delta R_0}{\mu}}}\right)$  & \myred{---} \tnote{{\color{blue}(2)}} & \myred{2nd} & \myred{$M$ -Lipshitz hessian}
    \\ \cline{3-7}
    && \texttt{ACN} \cite{agafonov2021accelerated} & $\mathcal{O} \left( \sqrt{\frac{\delta}{\mu}} \log\frac{1}{\varepsilon} + \myred{\sqrt[3]{\frac{M \delta R_0}{\mu}}}\right)$ & \myred{---} \tnote{{\color{blue}(2)}} &\myred{2nd} & \myred{$M$ -Lipshitz hessian}
    \\ \cline{3-7}
    && \texttt{Acc SONATA} \cite{tian2021acceleration} & $\mathcal{O} \left( \sqrt{\frac{\delta}{\mu}} \log \frac{1}{\varepsilon} \myred{\log \frac{\delta}{\mu}}\right)$ & \myred{---} \tnote{{\color{blue}(2)}} & 1st & decentralized
    \\ \cline{3-7}
    && \cellcolor{bgcolor2}{This paper}  & \cellcolor{bgcolor2}{$\mathcal{O} \left( \sqrt{\frac{\delta}{\mu}} \log \frac{1}{\varepsilon} \right)$} & \cellcolor{bgcolor2}{$\mathcal{O} \left( \sqrt{\frac{L}{\mu}} \log \frac{1}{\varepsilon} \right)$} & \cellcolor{bgcolor2}{1st} & \cellcolor{bgcolor2}{}
    \\ \cline{2-7}
    & \multirow{2}{*}{\rotatebox[origin=c]{90}{\textbf{Lower}}} 
    & \cite{arjevani2015communication} & $\mathcal{O} \left( \sqrt{\frac{\delta}{\mu}} \log \frac{1}{\varepsilon} \right)$& --- &&
    \\ \cline{3-7} 
    && \cite{nesterov2018lectures} & --- & $\mathcal{O} \left( \sqrt{\frac{L}{\mu}} \log \frac{1}{\varepsilon} \right)$ && non-distributed
    \\\hline\hline 
    \multirow{4}{*}{\rotatebox[origin=c]{90}{\textbf{Saddles}\quad}} & \multirow{2}{*}{\rotatebox[origin=c]{90}{\textbf{Upper}}} 
    & \texttt{SMMDSA} \cite{beznosikov2021distributed} & $\mathcal{O} \left( \frac{\delta}{\mu} \log \frac{1}{\varepsilon} \right)$ & $\mathcal{O} \left( \frac{L}{\mu} \log \frac{1}{\varepsilon} \myred{\log \frac{L}{\mu}} \right)$ & 1st &
    \\ \cline{3-7}
    && \cellcolor{bgcolor2}{ This paper}  & \cellcolor{bgcolor2}{$\mathcal{O} \left( \frac{\delta}{\mu} \log \frac{1}{\varepsilon} \right)$} & \cellcolor{bgcolor2}{$\mathcal{O} \left( \frac{L}{\mu} \log \frac{1}{\varepsilon} \right)$} & \cellcolor{bgcolor2}{1st} & \cellcolor{bgcolor2}{}
    \\ \cline{2-7}
    & \multirow{2}{*}{\rotatebox[origin=c]{90}{\textbf{Lower}}} 
    & \cite{beznosikov2021distributed} & $\mathcal{O} \left( \frac{\delta}{\mu} \log \frac{1}{\varepsilon} \right)$& --- &&
    \\ \cline{3-7}
    && \cite{ouyang2021lower} & - & $\mathcal{O} \left( \frac{L}{\mu} \log \frac{1}{\varepsilon} \right)$ && non-distributed
    \\\hline 
    \end{tabular}   
    \begin{tablenotes}
    {\small
    \item [] \tnote{{\color{blue}(1)}} This is the worst-case complexity, as   pointed out in the paper;   the convergence of the method might be better, based upon  an additional sequence $G_t$  \cite{hendrikx2020statistically};
    \tnote{{\color{blue}(2)}} proximal local computations (exact solution of local subproblems);
    \tnote{{\color{blue}(3)}} from Lipschitzness of the Hessian and strong convexity follows self-concordance;
    \tnote{{\color{blue}(4)}} gradient  complexity not provided, we derived  it using \cite{nesterov2018lectures};
    \tnote{{\color{blue}(5)}}  gradient complexity not provided, we derived  it using \cite{nesterov2020primal};
    \tnote{{\color{blue}(6)}} gradient complexity holds for nonquadratic functions;
    \item [] {\em Notation:} $\delta=$ similarity parameter, $L$=smoothness constant of $f_i$, $\mu=$ strong convexity constant of   $r$, $\varepsilon=$accuracy of the solution, $R_0 := \|x^0 - x^* \|$, $\Delta F_0 :+ r(x^0) - r(x^*)$.     
    }
\end{tablenotes}    
    \end{threeparttable}
    }\vspace{-0.4cm}
\end{table*}

\subsection{Related works}\label{sec:related_works}\vspace{-0.2cm}
\textbf{Gradient-Sliding:} Since the seminal paper \cite{lan2016gradient}, the idea of gradient-sliding    for structured convex optimization such as \eqref{eq:main} has received significant attention   as tool to skip gradient computations  of one function in the summand; examples and generalization include  first-order accelerated methods   \cite{lan2016gradient,lan2016accelerated,lan2016conditional},
zero-order (derivative-free) schemes \cite{dvinskikh2020accelerated,ivanova2020oracle,beznosikov2020derivative,stepanov2021one},   
high-order methods \cite{kamzolov2020optimal,gasnikov2021accelerated,ahookhosh2021high,grapiglia2022adaptive}, and  slidings for  saddle point problems and variational inequalities \cite{alkousa2020accelerated,lan2021mirror,tominin2021accelerated,beznosikov2021distributed}.
Albeit applicable to  more general classes of optimization problems  than \eqref{eq:main} (e.g., allowing either $p$ and $q$ to be nonsmooth), none of the existing methods provide guarantees when $p$ is nonconvex. 
The proposed algorithm fills this gap. Furthermore, it achieves optimal lower complexity bounds on the calls of $\nabla p$ and $\nabla q$. 

\textbf{Distributed optimization under function similarity:} The literature of distributed optimization is vast;   given the focus of this work,   we   comment  next solution  methods   exploiting function similarity  via proper preconditioning--Table~\ref{tab:comparison0} summarizes  complexity results of existing distributed methods solving either  minimization problems or saddle-point formulations, and is commented next.

The seminal paper \cite{arjevani2015communication} established lower communication complexity bounds for  (\ref{eq:main2}) under $\delta$-similarity: $\varepsilon$-optimality cannot be achieve in less than  $\Omega(\sqrt{{\delta}/{\mu}} \log {1}/{\varepsilon})$ communication rounds. Since then, 
  a lot of effort has been devoted to design     distributed schemes 
  aiming at   achieving   optimal communication complexity. The authors in \cite{pmlr-v32-shamir14} proposed  \texttt{DANE},    a   mirror-descent based algorithm     whereby   workers   perform a local data preconditioning via a suitably chosen Bregman divergence, and the master averages the solutions of the workers. For {\it quadratic} losses, \texttt{DANE} achieves communication complexity $\widetilde{\mathcal{O}}((\delta/\mu)^2\log1/\varepsilon)$; this was later improved to   ${\mathcal{O}}((\delta/\mu)\log1/\varepsilon)$    for nonquadratic losses   in    \cite{sun2019distributed}, where   the \texttt{SONATA} algorithm was proposed (also implementable over mesh-networks).  
 
 Improvements were achieved employing   acceleration;  efforts include:  \texttt{DiSCO} \cite{pmlr-v37-zhangb15}, an inexact
damped Newton method coupled with a preconditioned conjugate gradient (to compute the Newton direction), which   achieves  communication complexity  $\widetilde{\mathcal{O}}(\sqrt{\delta/\mu})\log1/\varepsilon)$ for self-concordant losses (see Table \ref{tab:comparison0} for the log-factors hidden in the $\widetilde{\mathcal{O}}$); \texttt{AIDE} \cite{reddi2016aide}, which uses the Catalyst framework  \cite{lin2018catalyst}, matching the rate of  \texttt{DiSCO} for quadratic losses; \texttt{DANE-HB} \cite{yuan2019convergence}, a variant of  \texttt{DANE} equipped with Heavy Ball momentum and matching for quadratic functions the communication complexity of \texttt{DiSCO} and \texttt{AIDE}; and \texttt{SPAG} \cite{hendrikx2020statistically}, a preconditioned direct accelerated method,    achieving for nonquadradic losses {\it asymptotically} the convergence rate   ${\mathcal{O}}((1-1/\sqrt{\beta/\mu})^k)$   ($k$ is the iteration index)--the worst-case rate is still ${\mathcal{O}}(\sqrt{L/\mu})\log 1/\varepsilon)$.

Finally, higher order methods employing preconditioning have been studied in   \cite{daneshmand2021newton,agafonov2021accelerated,tian2021acceleration}: \cite{daneshmand2021newton} proposed \texttt{DiRegINA},  a decentralization of  the cubic regularization of the Newton method,  where workers build Newton direction  sampling local   Hessians;  \cite{agafonov2021accelerated}  introduced \texttt{ACN}, an   inexact accelerated cubic-regularized Newton's method, with improved complexity with respect to   \cite{daneshmand2021newton}; and    \cite{tian2021acceleration} extended the Catalyst framework \cite{lin2018catalyst} to the distributed setting (including mesh networks), proposing  
\texttt{Acc SONATA}--the communication complexity of these methods is reported in Table \ref{tab:comparison0}.  

In summary, the above tour on  the relevant literature shows that none of the existing methods can match  lower communication complexity bounds  for (non quadratic) optimization problems (\ref{eq:main2}) under function similarity (all complexity bounds contain log-factors). The proposed distributed method achieves lower communication and computation complexity bounds.

 \vspace{-0.2cm}

\section{Optimal Gradient Sliding for Minimization Problems} \label{sec:slid_min}\vspace{-0.1cm}

We study the  minimization problem \eqref{eq:main},  under the following blanket assumptions. 
   
\begin{assumption}\label{ass:mu}
    $r(x)\colon \R^d \rightarrow \R$ is $\mu$-strongly convex on $\mathbb{R}^d$. 
\end{assumption}

\begin{assumption}\label{ass:L_q}
    $q(x)\colon \R^d \rightarrow \R$ is convex and $L_q$-smooth on $\mathbb{R}^d$. 
\end{assumption}

\begin{assumption}\label{ass:L_p}
    $p(x)\colon \R^d \rightarrow \R$ is $L_p$-smooth on $\R^d $.
\end{assumption}


The proposed   Accelerated ExtraGradient sliding is formally introduced in Algorithm \ref{ae:alg}. Convergence of the outer loop is established in Theorem \ref{ae:thm}, while Theorem \ref{aux:thm} determines complexity of solving the inner loop up to a suitable termination. Finally Theorem \ref{th3} provides the overall complexity merging inner and outer loop results. The proof of all the theorems   can be found in   Appendix \ref{sec:slid_min_strcon}. 

\begin{algorithm}[H]
	\caption{Accelerated Extragradient}
	\label{ae:alg}
	\begin{algorithmic}[1]
		\State {\bf Input:} $x^0=x_f^0 \in \R^d$
		\State {\bf Parameters:} $\tau \in (0,1)$, $\eta,\theta,\alpha > 0, K \in \{1,2,\ldots\}$
		\For{$k=0,1,2,\ldots, K-1$}
			\State $x_g^k = \tau x^k + (1-\tau) x_f^k$\label{ae:line:1}
			\State $x_f^{k+1} \approx \argmin_{x \in \R^d}\left[ A_\theta^k(x) \eqdef p(x_g^k) + \<\nabla p(x_g^k),x - x_g^k> + \frac{1}{2\theta}\sqn{x - x_g^k} + q(x)\right]$\label{ae:line:2}
			\State $x^{k+1} = x^k + \eta\alpha (x_f^{k+1}  - x^k)- \eta \nabla r(x_f^{k+1})$\label{ae:line:3}
		\EndFor
		\State {\bf Output:} $x^K$
	\end{algorithmic}
\end{algorithm}

\begin{theorem}\label{ae:thm}
	 Consider  \Cref{ae:alg} for Problem \ref{eq:main} under Assumption \ref{ass:mu}-\ref{ass:L_p}, with the following tuning: 
    $$ \textstyle
    \tau = \min\left\{1,\frac{\sqrt{\mu}}{2\sqrt{L_p}}\right\}, \quad \theta = \frac{1}{2L_p}, \quad  \eta = \min \left\{\frac{1}{2\mu},\frac{1}{2\sqrt{\mu L_p}}\right\}, \quad \alpha = \mu;
    $$  and let $x_f^{k+1}$   in \cref{ae:line:2} satisfy    
	\begin{equation}\label{aux:grad}
	\textstyle	\sqn{\nabla A_\theta^k(x_f^{k+1})} \leq  \frac{L_p^2}{3}\sqn{x_g^k- \argmin_{x \in \R^d} A_\theta^k(x)}.
	\end{equation}
    Then,  for any 
	\begin{equation}\label{eq:K}
	\textstyle	K\geq 2\max \left\{1, \sqrt{\frac{L_p}{\mu}}\right\}\log \frac{\sqn{x^0 - x^*}
		+\frac{2\eta}{\tau}\left[R(x^0) - R(x^*)\right]}{\varepsilon},
	\end{equation}
	we have the following estimate for the distance to the solution $x^*$:
	\begin{equation}\label{eq:accuracy}
	\textstyle	\sqn{x^K - x^*} \leq \varepsilon.
	\end{equation}
\end{theorem}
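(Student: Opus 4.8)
The plan is to establish a one-step Lyapunov contraction for the outer loop and then iterate it. The natural potential is
$$
\Psi^k \eqdef \sqn{x^k - x^*} + \frac{2\eta}{\tau}\left[ R(x_f^k) - R(x^*) \right],
$$
and the goal is to show $\Psi^{k+1} \le (1 - \rho)\Psi^k$ with $\rho \asymp \min\{1,\sqrt{\mu/L_p}\}$, which upon unrolling $K$ times and using the stated lower bound on $K$ yields $\sqn{x^K - x^*} \le \Psi^K \le \varepsilon$. So the whole theorem reduces to the per-iteration inequality.

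**First I would** treat \cref{ae:line:2} as producing an \emph{inexact} proximal-gradient step: if $\hat{x}^{k+1} \eqdef \argmin_x A_\theta^k(x)$ is the exact minimizer, then $\nabla A_\theta^k(\hat{x}^{k+1}) = 0$, and the termination criterion \eqref{aux:grad} together with $L_p$-smoothness of $A_\theta^k$ (note $A_\theta^k$ has smoothness at most $\tfrac{1}{\theta} + L_q = 2L_p + L_q$ from the quadratic term plus $q$; one should be careful here — actually the relevant bound uses strong convexity $\tfrac{1}{\theta} - L_p = L_p$ of the surrogate modulo the nonconvex part, since $p$ is only $L_p$-smooth, the map $x\mapsto \langle\nabla p(x_g^k),x\rangle + \tfrac{1}{2\theta}\sqn{x-x_g^k}$ is $(\tfrac1\theta)$-strongly convex and adding convex $q$ keeps it $(\tfrac1\theta=2L_p)$-strongly convex) lets me bound $\sqn{x_f^{k+1} - \hat{x}^{k+1}}$ by a controlled multiple of $\sqn{x_g^k - \hat{x}^{k+1}}$, and hence also relate $x_f^{k+1}$ to $x_g^k$. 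The key structural fact is that $\hat{x}^{k+1}$ is exactly a gradient step on the convex majorant of $r$ at $x_g^k$, so by the standard "sufficient decrease / three-point" lemma for proximal-gradient on an $L_p$-smooth (possibly nonconvex) $p$ plus convex $q$, one gets $R(\hat{x}^{k+1}) \le R(x_g^k) - c\, L_p \sqn{\hat{x}^{k+1} - x_g^k}$ for some absolute constant, plus the usual convexity-exploiting inequality $R(\hat{x}^{k+1}) \le R(z) + \langle \text{(grad map)}, x_g^k - z\rangle - \tfrac{\mu}{2}\sqn{z - x_g^k} + \cdots$ valid for all $z$ because $r = p+q$ is $\mu$-strongly convex even though $p$ alone is not.

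**Then I would** combine that with the analysis of the extrapolation step \cref{ae:line:3}, $x^{k+1} = x^k + \eta\alpha(x_f^{k+1} - x^k) - \eta\nabla r(x_f^{k+1})$, which is a Nesterov/similar-triangles-type update: expand $\sqn{x^{k+1} - x^*}$, use $\alpha = \mu$ and the definition $x_g^k = \tau x^k + (1-\tau)x_f^k$ to convert the cross terms into a telescoping combination of $\Psi^k$-ingredients, invoking $\mu$-strong convexity of $r$ at the point $x_f^{k+1}$ to produce the $R(x_f^{k+1}) - R(x^*)$ and $\sqn{x^{k+1}-x^*}$ terms with the right signs, and using the inexactness bound from the previous paragraph to absorb the error terms $\propto \sqn{x_f^{k+1} - \hat x^{k+1}}$ into the negative $-cL_p\sqn{\hat x^{k+1}-x_g^k}$ slack (this is why the constant $\tfrac{L_p^2}{3}$ in \eqref{aux:grad} appears — it is tuned so the error is a small enough fraction of that slack). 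The parameter choices $\theta = \tfrac{1}{2L_p}$, $\tau = \min\{1,\tfrac{\sqrt\mu}{2\sqrt{L_p}}\}$, $\eta = \min\{\tfrac1{2\mu},\tfrac1{2\sqrt{\mu L_p}}\}$ are exactly what make the coefficients line up so that the residual is $(1-\rho)\Psi^k$ with $\rho = \eta\mu = \tfrac12\min\{1,\sqrt{\mu/L_p}\}$; then $\Psi^K \le (1-\rho)^K \Psi^0 \le e^{-\rho K}\Psi^0 \le \varepsilon$ precisely when $K \ge \rho^{-1}\log(\Psi^0/\varepsilon) = 2\max\{1,\sqrt{L_p/\mu}\}\log(\Psi^0/\varepsilon)$, and since $x^0 = x_f^0$ we have $\Psi^0 = \sqn{x^0-x^*} + \tfrac{2\eta}{\tau}[R(x^0)-R(x^*)]$, matching \eqref{eq:K}. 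Finally $\sqn{x^K - x^*} \le \Psi^K \le \varepsilon$ gives \eqref{eq:accuracy}.

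**The main obstacle** will be handling the nonconvexity of $p$ cleanly throughout: every place where a classical accelerated-gradient proof would invoke convexity of the smooth part must instead route through either (i) the $\mu$-strong convexity of the \emph{sum} $r$, or (ii) the strong convexity of the surrogate $A_\theta^k$ (which holds because $\tfrac1\theta = 2L_p > L_p$ dominates the nonconvex curvature of $p$), or (iii) a descent-lemma argument that only needs $L_p$-smoothness. Making sure the inexactness error $\sqn{x_f^{k+1}-\hat x^{k+1}}$, controlled only in the weak sense of \eqref{aux:grad}, is genuinely dominated by the available negative slack — rather than by a term that could have the wrong sign because $p$ is nonconvex — is the delicate bookkeeping step, and is where the specific constant $\tfrac13$ and the factor $2$ in $\theta,\tau,\eta$ are consumed. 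I would isolate this as a lemma: "for the exact prox step $\hat x^{k+1}$, for all $z$, $\frac{2\eta}{\tau}R(\hat x^{k+1}) + \sqn{x^{k+1}(\hat\cdot) - x^*} \le \cdots$", prove it first, and then perturb to the inexact $x_f^{k+1}$ at the end.
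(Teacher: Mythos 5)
Your plan follows essentially the same route as the paper's proof: the same Lyapunov function $\sqn{x^k-x^*}+\tfrac{2\eta}{\tau}[r(x_f^k)-r(x^*)]$ contracted at rate $\rho=\eta\mu=\tfrac12\min\{1,\sqrt{\mu/L_p}\}$, with nonconvexity of $p$ handled exactly as in the paper via $\mu$-strong convexity of the sum $r$, the $\tfrac1\theta=2L_p$-strong convexity of the linearized surrogate $A_\theta^k$ (converting the stopping criterion \eqref{aux:grad} into a bound on $\sqn{x_f^{k+1}-\argmin A_\theta^k}$), and $L_p$-Lipschitzness of $\nabla p$ to absorb the inexactness into the negative quadratic slack. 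The only difference is organizational (you would prove the one-step inequality for the exact prox point and then perturb, whereas the paper's Lemma works directly with the inexact $x_f^{k+1}$), which does not change the substance of the argument.
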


\subsection{Solving the auxiliary subproblems} \label{sec:auxprob}
	At each iteration of \Cref{ae:alg},  one needs to  solve the   subproblem:
\begin{equation}\label{aux:main}
	\textstyle \min_{x \in \R^d} A_\theta^k(x) \eqdef p(x_g^k) + \<\nabla p(x_g^k),x - x_g^k> + \frac{1}{2\theta}\sqn{x - x_g^k} + q(x).
\end{equation}

According to \Cref{ae:thm}, (\ref{aux:main}) need not be solved to arbitrary precision; inexact solutions    $x_f^{k+1}$   satisfying  condition~\eqref{aux:grad} suffice.  
Condition \eqref{aux:grad} means that gradient norm $\norm{\nabla A_\theta^k(x_f^{k+1})}$ should be sufficiently small. Notice that     $A_\theta^k(x)$ in (\ref{aux:main}) is $(2L_p + L_q)$-smooth and convex.

Problem \ref{aux:main} can be solved up to the termination \eqref{aux:grad} using any of the algorithms in  \cite{kim2021optimizing,kim2018generalizing,nesterov2020primal}. We obtain the following complexity.   
\begin{theorem}[\cite{nesterov2020primal} Remark 1]\label{aux:thm}
	There exists a certain algorithm such that, when applied  to   problem~\eqref{aux:main} with the starting point $x_g^k$, returns $x_f^{k+1}$  satisfying 
	\begin{equation}
	\textstyle	\norm{\nabla A_\theta^k(x_f^{k+1})} \leq \dfrac{D^2\cdot\max\{L_p,L_q\}\norm{x_g^k- \argmin_{x \in \R^d} A_\theta^k(x)}}{T^2},
	\end{equation}
	where $D>0$ is some universal  constant (independent of $L_p, L_q$, $T$ etc) and   $T$ is the number of calls of $\nabla q$  by the algorithm.
\end{theorem}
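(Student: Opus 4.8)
The plan is to reduce \eqref{aux:main} to a single clean subroutine: given an $L$-smooth convex $F\colon\R^d\to\R$ and a starting point $x_0$, compute, using $T$ gradient evaluations, a point $\hat x$ with $\norm{\nabla F(\hat x)}$ controlled by $L\norm{x_0 - x^\star}/T^2$, where $x^\star \eqdef \argmin_{x\in\R^d}F(x)$. First I would split the objective as $A_\theta^k = \ell_k + q$, where
\[
\ell_k(x) \eqdef p(x_g^k) + \<\nabla p(x_g^k), x - x_g^k> + \tfrac{1}{2\theta}\sqn{x - x_g^k}
\]
is a quadratic whose gradient $\nabla\ell_k(x) = \nabla p(x_g^k) + \tfrac{1}{\theta}(x - x_g^k)$ is obtained from elementary vector operations once the vector $\nabla p(x_g^k)$ has been supplied by the outer loop, and hence requires no call of $\nabla q$. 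Thus $A_\theta^k$ is convex and $L$-smooth with $L = \tfrac{1}{\theta} + L_q = 2L_p + L_q \le 3\max\{L_p,L_q\}$ (as already noted after \eqref{aux:main}), and each gradient evaluation of $A_\theta^k$ costs exactly one call of $\nabla q$. So it suffices to realize the subroutine above on $F = A_\theta^k$, $x_0 = x_g^k$.

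To realize it I would use a two-phase accelerated scheme. Phase~1: run an optimal method for function-value decrease (e.g.\ the optimized gradient method of \cite{kim2018generalizing}) for $\lfloor T/2\rfloor$ steps from $x_g^k$, producing a point $z$ with $A_\theta^k(z) - A_\theta^k(x^\star) \le c_1 L\,\sqn{x_g^k - x^\star}/T^2$. Phase~2: run, from $z$, an optimal method for gradient-norm decrease (the OGM-G scheme of \cite{kim2021optimizing}) for the remaining $\lceil T/2\rceil$ steps, which guarantees $\sqn{\nabla A_\theta^k(x_f^{k+1})} \le c_2 L\,(A_\theta^k(z) - A_\theta^k(x^\star))/T^2$. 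Composing the two estimates gives $\sqn{\nabla A_\theta^k(x_f^{k+1})} \le c_1 c_2 L^2\,\sqn{x_g^k - x^\star}/T^4$; taking square roots and bounding $L \le 3\max\{L_p,L_q\}$ yields the claim with $D^2 \eqdef 3\sqrt{c_1 c_2}$, a universal constant. The total number of $\nabla q$ calls across both phases is $T$. (For the finitely many small values of $T$ where the two-phase split is degenerate, the bound holds trivially after enlarging $D$, since $\norm{\nabla A_\theta^k(x_g^k)} = \norm{\nabla A_\theta^k(x_g^k) - \nabla A_\theta^k(x^\star)} \le L\norm{x_g^k - x^\star}$ by smoothness.) Alternatively one can invoke the single-pass guarantee of \cite[Remark~1]{nesterov2020primal} directly.

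The only step that is not routine is Phase~2. A single run of a standard accelerated method from $x_g^k$ gives only $A_\theta^k(x_T) - A_\theta^k(x^\star) = O\!\left(L\sqn{x_g^k - x^\star}/T^2\right)$, which via $L$-smoothness yields $\norm{\nabla A_\theta^k(x_T)} = O\!\left(L\norm{x_g^k - x^\star}/T\right)$ --- the wrong power of $T$; achieving the accelerated $1/T^2$ rate \emph{on the gradient norm} is exactly the contribution of the gradient-norm-optimized method, which I would cite rather than re-derive. A minor point worth flagging is that strong convexity of $A_\theta^k$ (which is $2L_p$-strongly convex) is not exploited: the $1/T^2$ bound is the worst-case guarantee over the smooth convex class, and this is precisely the form in which it is later fed into the overall complexity analysis of \Cref{th3}.
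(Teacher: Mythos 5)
Your proposal is correct. Note, though, that the paper does not prove this statement at all: it imports it wholesale as Remark~1 of \cite{nesterov2020primal}, merely remarking beforehand that any of \cite{kim2021optimizing,kim2018generalizing,nesterov2020primal} can serve as the inner solver. What you do differently is give an actual construction by composing exactly those cited ingredients: a function-value-optimal method (\cite{kim2018generalizing}) for the first half of the budget, followed by the gradient-norm-optimal OGM-G (\cite{kim2021optimizing}) for the second half, which is indeed the standard way to obtain the $\norm{\nabla A_\theta^k(x_f^{k+1})} = O\bigl(L\norm{x_g^k - x^\star}/T^2\bigr)$ guarantee. Your accounting of the oracle is also the right one and matches how the paper uses $T$ in Section~\ref{sec:compl_min}: since $\nabla p(x_g^k)$ is supplied by the outer loop, each gradient of $A_\theta^k$ costs exactly one call of $\nabla q$, and the smoothness constant $2L_p+L_q\le 3\max\{L_p,L_q\}$ is absorbed into $D$. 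Two small bookkeeping remarks: splitting the budget as $T/2+T/2$ introduces an extra factor of $4$ after taking square roots (so $D^2$ is $12\sqrt{c_1c_2}$ rather than $3\sqrt{c_1c_2}$), which is harmless since $D$ is only claimed to be universal; and your fallback for degenerately small $T$ via $\norm{\nabla A_\theta^k(x_g^k)}\le L\norm{x_g^k-x^\star}$ is a sensible patch that the cited single-pass result makes unnecessary. In short, your route is self-contained modulo the two cited black-box guarantees, whereas the paper's is a pure citation; both yield the same statement, and yours has the added value of making explicit why $T$ counts only $\nabla q$ calls.
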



\subsection{Overall complexity of the optimal gradient-sliding} \label{sec:compl_min}
  \Cref{aux:thm} suggests that, to satisfy condition  \eqref{aux:grad} in \Cref{ae:thm}, it is sufficient to   choose  the number $T$ of  iterations   of the inner algorithm  as
\begin{equation}\label{eq:T}
	\textstyle T = \sqrt[4]{3}D\max\left\{1, \sqrt{\frac{L_q}{L_p}}\right\}.
\end{equation}
 
 We can now determine the overal complexity of Algorithm 1.  At each iteration of \Cref{ae:alg} we call $\nabla p$ twice (at $x^k_g$ -- line \ref{ae:line:2}
 and at $x^{k+1}_f$ -- line \ref{ae:line:3}), and $\nabla q$ is computed $T + 1$ times ($T$ times in the auxiliary problem -- line \ref{ae:line:2}, and at $x^{k+1}_{f}$ -- line \ref{ae:line:3}).
Hence, to find an $\varepsilon$-solution of  problem~\eqref{eq:main}, i.e., to find $x^K \in \R^d$ that satisfies \eqref{eq:accuracy}, \Cref{ae:alg} requires $K$   iterations as given in \eqref{eq:K}, 
\begin{equation*}
	\textstyle 2 \times K = \cO\left(\max\left\{1, \sqrt\frac{L_p}{\mu}\right\}\log\frac{1}{\epsilon}\right) ~~ \text{calls of $\nabla p(x)$,   and}
\end{equation*}
\begin{equation*}
	\textstyle (T+1) \times  K = \cO\left(\max\left\{1, \sqrt\frac{L_q}{L_p},\sqrt\frac{L_p}{\mu}, \sqrt\frac{L_q}{\mu}\right\}\log\frac{1}{\epsilon}\right) ~~\text{calls of $\nabla q(x)$.}
\end{equation*}

Putting everything together we obtain the following final convergence result.  
\begin{theorem} \label{th3}
Consider Problem~\eqref{eq:main} under   \Cref{ass:mu,ass:L_p,ass:L_q},  with $\mu \leq L_p \leq L_q$, without loss of generality. Then, to reach an $\varepsilon$-solution,  \Cref{ae:alg} requires
	\begin{equation*}
	\textstyle	\cO\left(\sqrt{\frac{L_q}{\mu}}\log\frac{1}{\epsilon}\right) ~~ \text{calls of}\,\, \nabla q(x) ~~{\text{and} }\quad 
		\cO\left(\sqrt{\frac{L_p}{\mu}}\log\frac{1}{\epsilon}\right) ~~ \text{calls of}\,\, \nabla p(x).
	\end{equation*}
\end{theorem}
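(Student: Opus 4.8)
The plan is to obtain \Cref{th3} purely by bookkeeping: combine the outer-loop iteration count of \Cref{ae:thm}, the inner-loop oracle count of \Cref{aux:thm}, and the per-iteration gradient tally of \Cref{ae:alg}, and then simplify the resulting maxima using the ordering $\mu\le L_p\le L_q$. No fresh convergence analysis is needed, since all the heavy lifting has already been done in \Cref{ae:thm} (outer loop) and \Cref{aux:thm} (inner loop).

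First I would pin down the number of outer iterations. Instantiating \Cref{ae:thm} with its prescribed tuning $\tau,\theta,\eta,\alpha$ and using $\mu\le L_p$, choosing $K$ equal to the right-hand side of \eqref{eq:K} gives $K=\cO\bigl(\sqrt{L_p/\mu}\,\log\tfrac1\varepsilon\bigr)$ outer iterations, which by \eqref{eq:accuracy} already guarantees $\sqn{x^K-x^*}\le\varepsilon$; the quantities inside the logarithm only affect the hidden constant.

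Next I would bound the cost of a single outer iteration, which is where the one point of real care — the main "obstacle" — lies: the number $T$ of inner iterations needed to meet the stopping rule \eqref{aux:grad} must be a \emph{universal constant}, independent of both the outer index $k$ and the target accuracy $\varepsilon$. This is true because \eqref{aux:grad} asks for $\norm{\nabla A_\theta^k(x_f^{k+1})}\le\tfrac{L_p}{\sqrt3}\,\norm{x_g^k-\argmin_x A_\theta^k(x)}$, while \Cref{aux:thm}, run from the natural warm start $x_g^k$ on the $(2L_p+L_q)$-smooth convex function $A_\theta^k$ and using $\nabla q$ as its sole oracle (the vector $\nabla p(x_g^k)$ entering $A_\theta^k$ is evaluated once and then frozen, so each inner step costs exactly one $\nabla q$), returns $\norm{\nabla A_\theta^k(x_f^{k+1})}\le D^2\max\{L_p,L_q\}\,\norm{x_g^k-\argmin_x A_\theta^k(x)}/T^2$ with $D$ universal — both bounds are proportional to the \emph{same} distance $\norm{x_g^k-\argmin_x A_\theta^k(x)}$, so the choice \eqref{eq:T}, $T=\sqrt[4]{3}\,D\max\{1,\sqrt{L_q/L_p}\}$, makes \eqref{aux:grad} hold for every $k$ with no $\varepsilon$- or $k$-dependence. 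One outer iteration then costs exactly two calls of $\nabla p$ (at $x_g^k$ in line \ref{ae:line:2}, and at $x_f^{k+1}$ through $\nabla r=\nabla p+\nabla q$ in line \ref{ae:line:3}) and $T+1$ calls of $\nabla q$ ($T$ inside the inner solver of line \ref{ae:line:2}, one more at $x_f^{k+1}$ in line \ref{ae:line:3}); correctly attributing which gradients are recomputed versus reused is the other place to be careful.

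Finally I would multiply and simplify. The total $\nabla p$ budget is $2K=\cO(\sqrt{L_p/\mu}\,\log\tfrac1\varepsilon)$, already the claimed bound. The total $\nabla q$ budget is $(T+1)K=\cO\bigl(\max\{1,\sqrt{L_q/L_p}\}\cdot\max\{1,\sqrt{L_p/\mu}\}\,\log\tfrac1\varepsilon\bigr)$; under $\mu\le L_p\le L_q$ this product collapses to $\sqrt{L_q/L_p}\cdot\sqrt{L_p/\mu}=\sqrt{L_q/\mu}$, giving $\cO(\sqrt{L_q/\mu}\,\log\tfrac1\varepsilon)$. I would close by remarking that dropping the ordering assumption merely replaces the $\nabla q$ bound by the symmetric $\cO\bigl(\max\{1,\sqrt{L_q/L_p},\sqrt{L_p/\mu},\sqrt{L_q/\mu}\}\log\tfrac1\varepsilon\bigr)$, so the ``without loss of generality'' is genuine.
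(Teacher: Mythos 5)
Your proposal is correct and follows essentially the same route as the paper: invoke \Cref{ae:thm} for the $K=\cO(\sqrt{L_p/\mu}\log\tfrac1\varepsilon)$ outer iterations, use \Cref{aux:thm} to show the constant inner budget $T=\sqrt[4]{3}\,D\max\{1,\sqrt{L_q/L_p}\}$ suffices for \eqref{aux:grad} (both sides scaling with the same distance $\norm{x_g^k-\argmin_x A_\theta^k(x)}$), count $2$ calls of $\nabla p$ and $T+1$ calls of $\nabla q$ per outer iteration, and simplify via $\sqrt{L_q/L_p}\cdot\sqrt{L_p/\mu}=\sqrt{L_q/\mu}$ under $\mu\le L_p\le L_q$. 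This matches the paper's argument in Section~\ref{sec:compl_min}, including the per-iteration gradient attribution.
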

This matches optimal complexity for the individual gradient calls. 

We conclude this section providing convergence of variant of the proposed algorithm suitable for    weakly convex $r$ in ~\eqref{eq:main}  (Assumption \ref{ass:mu} with $\mu = 0$). The algorithm is described in Appendix  \ref{sec:slid_min_conv}. Here we   only provide the final convergence result, the analogous  of Theorem \ref{th3}.
\begin{theorem} \label{th4}
Consider Problem~\eqref{eq:main} under Assumptions \ref{ass:mu} (with $\mu = 0$)-
\ref{ass:L_p}, with   $L_p \leq L_q$. Then, to find an $\varepsilon$-solution of  ~\eqref{eq:main} (in objective value), \Cref{ae_conv:alg} in Appendix~\ref{sec:slid_min_conv}  requires
	\begin{equation*}
	\textstyle	\cO\left(\sqrt{\frac{L_q}{\varepsilon}} \|x^0 - x^* \|\right) ~~ \text{calls of $\nabla q(x)$ ~~and}\quad 
		\cO\left(\sqrt{\frac{L_p}{\varepsilon}} \|x^0 - x^* \|\right)  ~~ \text{calls of $\nabla p(x)$}.
	\end{equation*}
	  
\end{theorem}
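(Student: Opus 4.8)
The plan is to prove \Cref{th4} by the same two-layer scheme used for the strongly convex case: first an outer-loop convergence estimate for the weakly convex variant \Cref{ae_conv:alg}, then the inner-loop complexity of \Cref{aux:thm}, and finally a bookkeeping of gradient calls. The weakly convex variant is the same accelerated extragradient iteration as \Cref{ae:alg}, except that the constant step sizes are replaced by an iteration-dependent schedule of the classical Nesterov type, tuned so that the accumulated weights $A_k$ (built from the products of the step-size parameters) grow quadratically, $A_k = \Theta(k^2/L_p)$; the linearization parameter is kept at the value $\theta\equiv 1/(2L_p)$ that makes $1/(2\theta)=L_p$ dominate the majorization constant $L_p/2$ of $p$.

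First I would establish the outer-loop statement: under \Cref{ass:mu} with $\mu=0$, \Cref{ass:L_q}, \Cref{ass:L_p}, and the inexactness criterion $\sqn{\nabla A_\theta^k(x_f^{k+1})}\le \tfrac{L_p^2}{3}\sqn{x_g^k-\argmin_{x}A_\theta^k(x)}$ (identical to \eqref{aux:grad}), after $K$ outer iterations one has $r(x_f^K)-r(x^*)\le \tfrac{c\,L_p\sqn{x^0-x^*}}{K^2}$ for an absolute constant $c$, so that $K=\cO(\sqrt{L_p/\varepsilon}\,\norm{x^0-x^*})$ iterations give an $\varepsilon$-solution in objective value. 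The engine is the observation that, since $1/(2\theta)=L_p\ge L_p/2$, the model $A_\theta^k$ is simultaneously convex, $(2L_p+L_q)$-smooth, $2L_p$-strongly convex, \emph{and} a global upper bound of $r$, i.e.\ $A_\theta^k(x)\ge r(x)$ for all $x$, regardless of the nonconvexity of $p$. Hence the exact prox-step $\bar x^{k+1}:=\argmin_x A_\theta^k(x)$ behaves like an $L_p$-gradient step for $r$, and a standard estimate-sequence / potential-function argument with potential $A_k\big(r(x_f^k)-r(x^*)\big)+\tfrac12\sqn{x^k-x^*}$ yields the $1/K^2$ rate with constant proportional to $L_p$. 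The inexact iterate is tied to $\bar x^{k+1}$ by $\norm{x_f^{k+1}-\bar x^{k+1}}\le\tfrac{1}{2L_p}\norm{\nabla A_\theta^k(x_f^{k+1})}\le\tfrac{1}{2\sqrt3}\norm{x_g^k-\bar x^{k+1}}$ (using $2L_p$-strong convexity of $A_\theta^k$ and \eqref{aux:grad}), so the perturbation is a controlled fraction of a quantity already appearing in the descent inequality and does not spoil the rate.

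The inner-loop cost is then inherited from \Cref{aux:thm}: each subproblem \eqref{aux:main} is convex and $(2L_p+L_q)$-smooth, so the algorithm of \Cref{aux:thm} reaches the termination \eqref{aux:grad} in $T=\cO(\max\{1,\sqrt{L_q/L_p}\})$ calls of $\nabla q$, exactly as in \eqref{eq:T}; and each outer iteration queries $\nabla p$ twice (at $x_g^k$ to form the model and at $x_f^{k+1}$ inside $\nabla r$) and $\nabla q$ a further $T+1$ times. Multiplying by $K=\cO(\sqrt{L_p/\varepsilon}\,\norm{x^0-x^*})$ gives $2K=\cO(\sqrt{L_p/\varepsilon}\,\norm{x^0-x^*})$ calls of $\nabla p$ and $(T+1)K=\cO(\max\{1,\sqrt{L_q/L_p}\}\sqrt{L_p/\varepsilon}\,\norm{x^0-x^*})=\cO(\sqrt{L_q/\varepsilon}\,\norm{x^0-x^*})$ calls of $\nabla q$, where the last equality uses $L_p\le L_q$. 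This is exactly the claim.

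The main obstacle is the outer-loop step: obtaining the $1/K^2$ rate with the \emph{correct} constant $L_p$ (rather than $2L_p+L_q$) while simultaneously absorbing the nonconvexity of $p$ through the upper-model property and propagating the inexactness \eqref{aux:grad} through the accelerated potential without losing a constant factor. Concretely, the time-varying parameters must be chosen so that the error terms $\sqn{x_f^{k+1}-\bar x^{k+1}}$ telescope against the negative terms produced by the descent lemma; this is where the precise constant $L_p^2/3$ in \eqref{aux:grad} and the Nesterov schedule must be matched. A softer alternative that serves as a sanity check is to apply \Cref{th3} to the regularized objective $r(x)+\tfrac{\varepsilon}{2\sqn{x^0-x^*}}\sqn{x-x^0}$, which is $\tfrac{\varepsilon}{\sqn{x^0-x^*}}$-strongly convex and reproduces the stated $\nabla p$ and $\nabla q$ complexities up to a logarithmic factor; the direct analysis of \Cref{ae_conv:alg} is needed only to remove that log.
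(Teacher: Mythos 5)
Your overall route is the same as the paper's: \Cref{ae_conv:alg} is exactly Algorithm~\ref{ae:alg} with the Nesterov schedule $\tau_k=\tfrac{2}{k+1}$, $\eta_k=\tfrac{1}{2\tau_k L_p}$, $\theta=\tfrac{1}{2L_p}$ (and, a minor point you gloss over, with the momentum term $\alpha(x_f^{k+1}-x^k)$ dropped from line~\ref{ae:line:3}); the paper proves precisely your claimed outer-loop estimate, $r(x_f^{k})-r(x^*)\le \tfrac{4L_p}{(k+1)^2}\sqn{x^0-x^*}$, via the potential $\Psi_k=\sqn{x^k-x^*}+\tfrac{1}{\tau_k^2L_p}\left[r(x_f^k)-r(x^*)\right]$, which is your $A_k\bigl(r(x_f^k)-r(x^*)\bigr)+\tfrac12\sqn{x^k-x^*}$ up to scaling; and the inner-loop cost from \Cref{aux:thm} and the per-iteration bookkeeping ($2$ calls of $\nabla p$, $T+1$ calls of $\nabla q$, with $T$ as in \eqref{eq:T}) are identical to Section~\ref{sec:compl_min}. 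The regularization fallback you mention is also a legitimate sanity check, with the log factor you note.

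The soft spot is exactly the step you defer: your stated ``engine'' for the $1/K^2$ rate --- that $A_\theta^k$ upper-bounds $r$, so the prox step behaves like an $L_p$-gradient step and ``a standard estimate-sequence argument'' applies --- is not how the rate can be obtained here, and as stated it would fail. The standard accelerated proximal-gradient/estimate-sequence argument needs the linearized part to be a \emph{lower} bound at the comparison points (convexity of $p$): one bounds the model value $A_\theta^k(u)$ at points $u$ built from $x^*$ and $x_f^k$ by $r(u)$ plus controllable terms, and with nonconvex $p$ one only gets $A_\theta^k(u)\le r(u)+\tfrac{3L_p}{2}\sqn{u-x_g^k}$, an extra positive error at the comparison point that the standard potential has no slot to absorb; the upper-model property $A_\theta^k\ge r$ does not fix this and in fact is never used in the paper. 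What the paper's Lemma (inequality \eqref{ae:eq:1} with $\mu=0$) uses instead is convexity of the \emph{full} $r$ evaluated at the inexact prox output $x_f^{k+1}$, combined with the identity $\theta^{-1}(x_f^{k+1}-x_g^k)+\nabla r(x_f^{k+1})=\nabla A_\theta^k(x_f^{k+1})+\nabla p(x_f^{k+1})-\nabla p(x_g^k)$, $L_p$-Lipschitzness of $\nabla p$, and $\tfrac1\theta$-strong convexity of $A_\theta^k$; this is only possible because line~\ref{ae_conv:line:3} updates $x^{k+1}$ with $\nabla r(x_f^{k+1})$ --- the extragradient feature you correctly count in the oracle bookkeeping but never exploit in your analysis sketch. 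Relatedly, the inexactness is not absorbed through your distance bound $\norm{x_f^{k+1}-\bar x^{k+1}}\le\tfrac{1}{2\sqrt3}\norm{x_g^k-\bar x^{k+1}}$ (correct, but not what is needed); the paper keeps criterion \eqref{aux:grad} in squared-gradient form and cancels it against the $-\tfrac{1}{4\theta}\sqn{x_g^k-\argmin_x A_\theta^k(x)}$ term generated inside \eqref{ae:eq:1}. So the skeleton and the final complexity accounting are right, but the central lemma --- which is the actual content of \Cref{th4} --- still needs the \eqref{ae:eq:1}-type argument rather than the ``standard'' one you invoke.
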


\section{Application to Distributed Optimization Under Similarity}\label{sec:similar}

In this section, we apply the proposed algorithm   the the distributed optimization problem \eqref{eq:main2}, under the following assumptions.

\begin{assumption}\label{ass:L}
	Each $f_i(x) \colon \R^d \rightarrow \R $ is convex and $L$-smooth.
\end{assumption}
\begin{assumption}\label{ass:mu2}
	$r(x) \colon \R^d \rightarrow \R$ is $\mu$-strongly convex.
\end{assumption}
\begin{assumption}\label{ass:delta}
	$f_1(x)\ldots, f_n(x)$ are $\delta$-related: $ \|\nabla^2 f_i(x) - \nabla^2 f(x)\| \leq  \delta$, for all $i$ and   $x \in \R^d $, and some $\delta>0$. 
  
\end{assumption}

We leverage now Algorithm 1 to solve \eqref{eq:main2}, using the  equivalent reformulation   \eqref{eq:pq}.  The algorithm applied to the distributed system can be described as follows.  The server computes $x^k_g$ and sends it to all the workers (line \ref{ae:line:1}). Workers  compute $\nabla f_i(x^k_g)$ and send it to the server. After collecting   all $\nabla f_i(x^k_g)$, the server   builds  $\nabla p(x^k_g) = \nabla f(x^k_g) - \nabla f_1 (x^k_g)$,  and then solves (inexactly) the local problem $A^k_{\theta}$ (line \ref{ae:line:2}). The inexact solution $x^{k+1}_f$ is then   broadcast    to the workers, which update their own  receives $\nabla f_i(x^{k+1}_f)$ and send back to the server, which can then evaluate $\nabla r(x^{k+1}_f)$ (line \ref{ae:line:3}).

 Using  Assumptions \ref{ass:L} and \ref{ass:mu2}, we infer   that $r$ is $\mu$-strongly convex; and  $q = f_1$ is $L_q$-smooth and convex, with $L_q = L$. It follows from   Assumption \ref{ass:delta} that  $\|\nabla^2 p \| \leq \delta$. Therefore,   $p$ has $L_p$-Lipschitz gradient, with $L_p = \delta$. This shows that we can leverage  Theorems \ref{th3} and \ref{th4} to establish convergence for   strongly convex and weakly convex $r$, as given next.    
  
\begin{theorem}
	Let \Cref{ass:L,ass:mu2,ass:delta} be satisfied with $\mu \leq \delta \leq L$.
	Then, to find $\varepsilon$-solution of the distributed optimization problem~\eqref{eq:main2} \Cref{ae:alg} requires 
	$$\textstyle \cO\left(\sqrt{\frac{\delta}{\mu}}\log\frac{1}{\epsilon} \right) ~\text{communication rounds and }~ \cO\left(\sqrt{\frac{L}{\mu}}\log\frac{1}{\epsilon} \right) ~\text{local gradient computations.}$$
\end{theorem}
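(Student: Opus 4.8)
The plan is to instantiate the general complexity result for Problem~\eqref{eq:main}, namely \Cref{th3}, using the correspondence established in the paragraph preceding the theorem: the reformulation~\eqref{eq:pq} with $q = f_1$ and $p = \frac1n\sum_{i=1}^n [f_i - f_1]$. The first step is to verify that \Cref{ass:mu}--\Cref{ass:L_p} hold for this $(p,q)$ under \Cref{ass:L,ass:mu2,ass:delta}. Specifically: $r$ is $\mu$-strongly convex by \Cref{ass:mu2} (giving \Cref{ass:mu}); $q = f_1$ is convex and $L$-smooth by \Cref{ass:L}, so \Cref{ass:L_q} holds with $L_q = L$; and for $p$, the key identity is $\nabla^2 p(x) = \frac1n\sum_{i=1}^n [\nabla^2 f_i(x) - \nabla^2 f_1(x)]$, whose operator norm is bounded by $\frac1n\sum_{i=1}^n \|\nabla^2 f_i(x) - \nabla^2 f_1(x)\| \le \delta$ using \Cref{ass:delta} (the statement there is phrased with $\nabla^2 f$, the average Hessian, but the triangle inequality through the average gives the pairwise bound up to the same constant). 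Hence $p$ is $L_p$-smooth with $L_p = \delta$, and it is in general nonconvex — which is exactly the regime \Cref{th3} was designed for.

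Next I would check the ordering hypothesis of \Cref{th3}, which requires $\mu \le L_p \le L_q$; here this reads $\mu \le \delta \le L$, which is precisely the assumption made in the theorem statement. With all hypotheses in place, \Cref{th3} applies verbatim and yields that \Cref{ae:alg} reaches an $\varepsilon$-solution using $\mathcal{O}(\sqrt{L_q/\mu}\log\frac1\varepsilon) = \mathcal{O}(\sqrt{L/\mu}\log\frac1\varepsilon)$ calls of $\nabla q$ and $\mathcal{O}(\sqrt{L_p/\mu}\log\frac1\varepsilon) = \mathcal{O}(\sqrt{\delta/\mu}\log\frac1\varepsilon)$ calls of $\nabla p$.

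The final step is the translation of oracle calls into the distributed cost model described just before the theorem. A call to $\nabla q = \nabla f_1$ is a purely local computation at the master and incurs no communication. A call to $\nabla p$ requires the server to broadcast the current point to all workers and collect their local gradients $\nabla f_i$ back — that is, exactly one communication round, and simultaneously one local gradient computation at each worker. Therefore the number of communication rounds equals (up to a constant) the number of $\nabla p$ calls, $\mathcal{O}(\sqrt{\delta/\mu}\log\frac1\varepsilon)$; and the number of local gradient computations at a worker is dominated by the larger of the $\nabla p$-call count and the $\nabla q$-call count, which under $\delta \le L$ is $\mathcal{O}(\sqrt{L/\mu}\log\frac1\varepsilon)$. (One should note the $T$ inner iterations per outer step only invoke $\nabla q$, hence stay local and do not inflate the communication count — this is the whole point of the sliding construction.)

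The proof is essentially a bookkeeping argument, so I do not anticipate a genuine obstacle; the only point requiring minor care is the Hessian-similarity bookkeeping in the first step — making sure that the $\delta$-relation stated via the average Hessian $\nabla^2 f$ indeed controls $\|\nabla^2 p\|$ with the same constant $\delta$ (it does, since $\nabla^2 p = \nabla^2 f - \nabla^2 f_1$ is itself a difference of a Hessian and the average Hessian, which is the exact quantity \Cref{ass:delta} bounds). Everything else is a direct substitution into \Cref{th3} followed by reading off the communication/computation correspondence.
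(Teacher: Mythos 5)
Your proposal is correct and follows essentially the same route as the paper: identify $q=f_1$ ($L_q=L$), bound $\|\nabla^2 p\|\le\delta$ via \Cref{ass:delta} so $L_p=\delta$, invoke \Cref{th3}, and translate $\nabla p$ calls into communication rounds and $\nabla q$ calls into local gradient computations. The only minor slip is the intermediate claim that the triangle inequality gives the pairwise Hessian bound "with the same constant" (it gives $2\delta$), but your closing observation that $\nabla^2 p=\nabla^2 f-\nabla^2 f_1$ is exactly the quantity bounded by \Cref{ass:delta} is the clean argument the paper uses, so the constant $\delta$ stands.
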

\begin{theorem}
	Let Assumptions \ref{ass:L}, \ref{ass:mu2} (with $\mu = 0$), \ref{ass:delta} be satisfied and $\delta \leq L$.
	Then, to find $\varepsilon$-solution of the distributed optimization problem~\eqref{eq:main2} \Cref{ae:alg} requires 
	$$\textstyle \cO\left(\sqrt{\frac{\delta}{\varepsilon}}\|x^0 - x^* \| \right) ~\text{communication rounds and }~ \cO\left(\sqrt{\frac{L}{\varepsilon}}\|x^0 - x^* \| \right) ~\text{local computations.}$$
\end{theorem}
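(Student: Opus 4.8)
The plan is to obtain this statement as an immediate specialization of \Cref{th4}, in exact analogy with how the strongly convex distributed result is derived from \Cref{th3}. First I would pass to the reformulation \eqref{eq:pq} of problem \eqref{eq:main2}, identifying $q := f_1$ and $p := \tfrac1n\sum_{i=1}^n[f_i - f_1]$, so that \eqref{eq:main2} becomes an instance of the structured problem \eqref{eq:main}. Then I would verify the hypotheses of \Cref{th4}: (i) $r$ is convex, which is \Cref{ass:mu2} with $\mu = 0$; (ii) $q = f_1$ is convex and $L_q$-smooth with $L_q = L$, directly from \Cref{ass:L}; and (iii) $p$ is $L_p$-smooth with $L_p = \delta$, since $\nabla^2 p = \nabla^2 f - \nabla^2 f_1$ gives $\|\nabla^2 p(x)\| \le \delta$ for all $x$ by \Cref{ass:delta} (this is already recorded in the text preceding the statement). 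The ordering requirement $L_p \le L_q$ of \Cref{th4} is precisely the assumed $\delta \le L$; note that $p$ need not be convex, and \Cref{th4} does not demand it.

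With the hypotheses in place, \Cref{th4} applies verbatim to \eqref{eq:pq}: the weakly convex variant \Cref{ae_conv:alg} of the accelerated extragradient sliding finds a point with $\varepsilon$-accurate objective value using $\cO(\sqrt{L_q/\varepsilon}\,\|x^0-x^*\|) = \cO(\sqrt{L/\varepsilon}\,\|x^0-x^*\|)$ evaluations of $\nabla q$ and $\cO(\sqrt{L_p/\varepsilon}\,\|x^0-x^*\|) = \cO(\sqrt{\delta/\varepsilon}\,\|x^0-x^*\|)$ evaluations of $\nabla p$. The remaining step is purely the translation into the master--worker cost model described in \Cref{sec:similar}: an evaluation of $\nabla p(x) = \nabla f(x) - \nabla f_1(x)$ is implemented by the server broadcasting $x$, the workers returning their $\nabla f_i(x)$, and the server aggregating --- i.e.\ exactly one communication round --- whereas every evaluation of $\nabla q = \nabla f_1$, including those performed inside the inner loop that solves \eqref{aux:main} to the tolerance \eqref{aux:grad}, is a local computation at the server. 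Substituting these identifications into the two oracle complexities yields $\cO(\sqrt{\delta/\varepsilon}\,\|x^0-x^*\|)$ communication rounds and $\cO(\sqrt{L/\varepsilon}\,\|x^0-x^*\|)$ local gradient computations.

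The only step that is not pure bookkeeping is checking that the communication/computation accounting is tight, i.e.\ that no extra communication is hidden in the inner iterations. Concretely one must confirm that the inner solver for \eqref{aux:main} uses only $\nabla q$ together with the single vector $\nabla p(x_g^k)$ obtained at the start of the outer iteration --- which holds because $\nabla A_\theta^k(x) = \nabla p(x_g^k) + \tfrac1\theta(x - x_g^k) + \nabla q(x)$ --- and that the weakly convex variant \Cref{ae_conv:alg}, like \Cref{ae:alg} (cf.\ \Cref{sec:compl_min}), evaluates $\nabla p$ only a constant number of times per outer iteration while calling $\nabla q$ a number of inner steps $T = \cO(\max\{1,\sqrt{L_q/L_p}\})$ as in \eqref{eq:T}. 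Granting this, the $\nabla p$ and $\nabla q$ counts are of the form $\cO(K)$ and $\cO((T+1)K)$ with $K = \cO(\sqrt{L_p/\varepsilon}\,\|x^0-x^*\|)$, which reduce to the two displayed complexities, and the claim follows.
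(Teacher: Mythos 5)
Your proposal is correct and matches the paper's own argument: the paper proves this result exactly by casting \eqref{eq:main2} as \eqref{eq:pq} with $L_q = L$ and $L_p = \delta$ (via $\|\nabla^2 p\| \le \delta$ from \Cref{ass:delta}), invoking \Cref{th4}, and identifying $\nabla p$ evaluations with communication rounds and $\nabla q$ evaluations with local computations. Your extra check that the inner solver for \eqref{aux:main} needs only the cached $\nabla p(x_g^k)$ and hence incurs no hidden communication is a sound piece of bookkeeping that the paper leaves implicit.
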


Such estimates are optimal from both communications \cite{arjevani2015communication} and local computations point of views \cite{nesterov2018lectures}. It is important to remark  that  Algorithm \ref{ae:alg} solves the local subproblem $A_\theta$ with some precision, while most of existing  works (see Table \ref{tab:comparison0})  assume that   local problems are solved with infinite precision (column Local gradient complexity), which is not   practical.
Note also that the subproblems in line \ref{ae:line:2} of Algorithm \ref{ae:alg} do not necessarily have to be solved by a deterministic algorithm as in Theorem \ref{aux:thm}. Stochastic methods can also be used, as long as they guarantee that condition \eqref{aux:grad} is met.\vspace{-0.2cm}

\section{Optimal Gradient Sliding for VIs}\label{sec:slid_VI}

In this section we consider the composite variational inequality   \cite{VIbook2003,Heinz} in the form:
\begin{equation} \label{eq:mainVI}
	\textstyle \text{Find } x^* \in \R^d ~:~ R(x^*)= 0 \text{ with } R(x) \eqdef Q(x) + P(x),
\end{equation}
where $Q(x), P(x) \colon \R^d \rightarrow \R^d$. Variation inequalities are a unified umbrella for a variety of problems--two examples follow.  

\textbf{Example 1 [Minimization].} Consider   problem \eqref{eq:main}, choose $Q(x) = \nabla q(x)$ and $P(x) = \nabla p(x)$. Then the solution of the variational inequality \eqref{eq:mainVI} means that we need to find the point $x^*$ where the operator $R(x^*) = \nabla r(x^*)$. For the convex function $r$, this is equivalent to finding the minimum.

\textbf{Example 2 [Saddle point problems].} Consider the convex-concave saddle point problem
\begin{align}
\label{eq:minmax}
\textstyle \min_{y \in \R^{d_y}} \min_{z \in \R^{d_z}} r(y,z):=q(y,z) + p (y, z).
\end{align}
If we take $Q(x) \eqdef Q(y,z) = [\nabla_y q(y,z), -\nabla_z q(y,z)]$ and $P(x) \eqdef P(y,z) = [\nabla_y p(y,z), -\nabla_z p(y,z)]$, then it can be proved that $x^* = (y^*, z^*)$ is a solution for \eqref{eq:mainVI} if and only if $x^* = (y^*, z^*)$ is a solution for \eqref{eq:minmax}, i.e.
$$
\textstyle r(y^*,z) \leq r(y^*,z^*) \leq r(y,z^*) \quad \text{for all } y \in \R^{d_y} \text{ and } z \in \R^{d_z}.
$$
While minimization problems are widely considered separately from variational inequalities, saddle point problems are often analysed under the VI lens.   In recent years the popularity of saddles has grown, this is due to the fact that they have both classical \cite{facchinei2007finite} and new ML \cite{goodfellow2014generative, Madry2017:adv} applications.

We study problem \eqref{eq:mainVI} under the following assumptions.  
\begin{assumption}\label{ass:muVI}
$R(x)$ is $\mu$-strongly monotone: $\langle R(x_1) - R(x_2), x_1 - x_2\rangle \geq \mu \|x_1 - x_2 \|^2$, for all $x_1, x_2 \in \R^d $.
\end{assumption}

\begin{assumption}\label{ass:L_qVI}
$Q(x)$ is monotone and $L_q$-Lipschitz: $\langle Q(x_1) - Q(x_2), x_1 - x_2\rangle \geq 0$ and $\| Q(x_1) - Q(x_2)\| \leq L_q\| x_1 - x_2\|$, for all $x_1, x_2 \in \R^d $.
\end{assumption}

\begin{assumption}\label{ass:L_pVI}
$P(x)$ is $L_p$-Lipschitz: $\| P(x_1) - P(x_2)\| \leq L_p\| x_1 - x_2\|$ for all $x_1, x_2 \in \R^d $.
\end{assumption}

For saddle point problems these assumptions are equivalent to (strong) convexity--(strong) concavity and Lipschitzness of gradients. 

\subsection{Sliding via Extragradient}

This algorithm is a non-accelerated version of Algorithm \ref{ae:alg}. A similar non-accelerated sliding is used in \cite{beznosikov2021distributed}. Our version however   has better theoretical and practical guarantees because of the effective stopping criterion \eqref{aux:gradVI}. Convergence of the outer loop is established in Theorem \ref{ae:thmVI}; Theorem \ref{aux:thmVI} establishes convergence of the inner loop up to the required termination; and finally Theorem \ref{th9} combine the two-loop complexity.  The proofs of the theorems can be found in Appendix~\ref{sec:slid_VI_strmon}. 

\begin{algorithm}[H]
	\caption{Extragradient Sliding for VIs}
	\label{ae:algVI}
	\begin{algorithmic}[1]
		\State {\bf Input:} $x^0 \in \R^d$
		\State {\bf Parameters:} $\eta,\theta,\alpha > 0, K \in \{1,2,\ldots\}$
		\For{$k=0,1,2,\ldots, K-1$}
			\State Find $u^k \approx \tilde u^k$ where $\tilde u^k$ is a solution for
			\vspace{-0.2cm}
            \begin{equation*}
            \text{Find } \tilde u^k \in \R^d ~:~ B_\theta^k(\tilde u^k)= 0 \text{ with } B_\theta^k(x) \eqdef P(x^k) + Q(x) + \frac{1}{\theta} (x - x^k)
            \end{equation*}
            \vspace{-0.5cm}
			\label{ae:line:2VI}
			\State $x^{k+1} = x^k + \eta\alpha (u^k  - x^k)- \eta R(u^k)$\label{ae:line:3VI}
		\EndFor
		\State {\bf Output:} $x^K$
	\end{algorithmic}
\end{algorithm}

\begin{theorem}\label{ae:thmVI}
Consider \Cref{ae:algVI} for Problem \eqref{eq:mainVI} under Assumptions \ref{ass:muVI}-\ref{ass:L_pVI}, with the following tuning:
$$\textstyle
    \theta = \frac{1}{2L_p}, \quad  \eta = \min \left\{\frac{1}{4\mu},\frac{1}{4 L_p}\right\}, \quad \alpha = 2\mu.
$$
 	Assume that   $u^k$ (\cref{ae:line:2VI})  satisfies 
	\begin{equation}\label{aux:gradVI}
	\textstyle	\sqn{B_\theta^k(u^k)} \leq  \frac{L_p^2}{3}\sqn{x^k- \tilde u^k}.
	\end{equation}
Then, for any 
	\begin{equation}\label{eq:KVI}
	\textstyle	K\geq 2\max \left\{1, \frac{L_p}{\mu}\right\}\log \frac{\| x^0 - x^*\|^2}{\varepsilon},
	\end{equation}
we have the following estimate for the distance to the solution $x^*$:
	\begin{equation}\label{eq:accuracyVI}
	\textstyle	\sqn{x^k - x^*} \leq \varepsilon.
	\end{equation}
\end{theorem}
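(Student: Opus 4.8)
The plan is to run a one-loop Lyapunov (potential-function) argument on the outer iteration, treating the inexact solve in line \ref{ae:line:2VI} as a controlled perturbation of the exact extragradient step. First I would set up the candidate potential $\Psi^k \eqdef \sqn{x^k - x^*}$ (no momentum term is needed since Algorithm \ref{ae:algVI} is the non-accelerated variant) and expand $\sqn{x^{k+1}-x^*}$ using line \ref{ae:line:3VI}:
\begin{equation*}
\textstyle \sqn{x^{k+1}-x^*} = \sqn{x^k-x^*} + 2\eta\alpha\<u^k-x^k, x^k-x^*> - 2\eta\<R(u^k), x^k-x^*> + \sqn{\eta\alpha(u^k-x^k)-\eta R(u^k)}.
\end{equation*}
The standard extragradient trick is to split $x^k - x^* = (x^k - u^k) + (u^k - x^*)$ inside the $\<R(u^k),\cdot>$ term and exploit $R(x^*)=0$ together with $\mu$-strong monotonicity (Assumption \ref{ass:muVI}) to get a negative term $-2\eta\mu\sqn{u^k-x^*}$, which is what will drive the contraction. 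The remaining cross terms involving $R(u^k)$ and $x^k-u^k$ must be tied back to the subproblem: by definition $B_\theta^k(\tilde u^k)=0$ means $Q(\tilde u^k) + P(x^k) + \tfrac1\theta(\tilde u^k - x^k)=0$, and the inexact point satisfies $\sqn{B_\theta^k(u^k)}\le \tfrac{L_p^2}{3}\sqn{x^k-\tilde u^k}$, so I would write $R(u^k) = Q(u^k)+P(u^k)$, add and subtract $Q(\tilde u^k)$, $P(x^k)$, and the residual $B_\theta^k(u^k)$, and bound each discrepancy: $\norm{P(u^k)-P(x^k)}\le L_p\norm{u^k-x^k}$, $\norm{Q(u^k)-Q(\tilde u^k)}\le L_q\norm{u^k-\tilde u^k}$, monotonicity of $Q$ for the sign-definite piece, and the stopping criterion for the $B_\theta^k(u^k)$ residual.

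Next I would control the gap $\norm{u^k - \tilde u^k}$ between the inexact and exact inner solutions. Since $B_\theta^k$ is $\tfrac1\theta$-strongly monotone (the $\tfrac1\theta(x-x^k)$ term dominates, as $Q$ is only monotone), one gets $\norm{u^k-\tilde u^k}\le \theta\norm{B_\theta^k(u^k)}\le \theta\tfrac{L_p}{\sqrt3}\norm{x^k-\tilde u^k}$, and with $\theta = \tfrac1{2L_p}$ this is $\le \tfrac1{2\sqrt3}\norm{x^k-\tilde u^k}$, hence also $\norm{u^k-x^k}\le (1+\tfrac1{2\sqrt3})\norm{x^k-\tilde u^k}$ and conversely $\norm{x^k-\tilde u^k}\le \tfrac{1}{1-1/(2\sqrt3)}\norm{u^k-x^k}$. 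These comparisons let me replace every occurrence of $\tilde u^k$ and of the residual by a constant times $\norm{u^k-x^k}^2$. Collecting terms, the recursion should take the shape
\begin{equation*}
\textstyle \sqn{x^{k+1}-x^*} \le (1 - 2\eta\mu + c_1\eta^2)\sqn{x^k-x^*} + \left(-\tfrac{c_2}{\theta} + c_3\eta + c_4\eta^2\right)\sqn{u^k-x^k},
\end{equation*}
and the role of the tuning $\eta = \min\{\tfrac1{4\mu},\tfrac1{4L_p}\}$, $\theta=\tfrac1{2L_p}$, $\alpha = 2\mu$ is exactly to make the $\sqn{u^k-x^k}$ coefficient $\le 0$ (absorbing the $L_p$-scale terms) and to make the $\sqn{x^k-x^*}$ coefficient $\le 1 - \eta\mu \le \max\{1-\tfrac14, 1-\tfrac{\mu}{4L_p}\}$. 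Unrolling this geometric contraction over $K$ steps and demanding $(1-\eta\mu)^K\sqn{x^0-x^*}\le\varepsilon$ gives $K \ge \tfrac1{\eta\mu}\log\tfrac{\sqn{x^0-x^*}}{\varepsilon}$, and substituting $\tfrac1{\eta\mu}= \max\{4, 4L_p/\mu\}$ yields the bound \eqref{eq:KVI} up to the stated constants, with \eqref{eq:accuracyVI} following immediately.

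The main obstacle I anticipate is the bookkeeping in the cross terms: getting the coefficient of $\sqn{u^k-x^k}$ to be nonpositive requires that the negative $-\tfrac{c_2}{\theta}\sqn{u^k-x^k}$ produced by the $\tfrac1\theta(x-x^k)$ part of $B_\theta^k$ genuinely dominates the positive contributions from the $L_q$-Lipschitz term on $Q$ (which is why $Q$'s monotonicity, not just Lipschitzness, must be used to kill the potentially large $L_q\sqn{\cdot}$ contribution), from the $L_p$-Lipschitz term on $P$, and from the Young's-inequality splitting of the residual term — all while only the modest slack $\tfrac{L_p^2}{3}$ in \eqref{aux:gradVI} is available. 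Choosing the Young's-inequality weights correctly so that everything closes with the given $\theta$ and $\eta$, rather than with artificially small step sizes, is the delicate part; the rest is routine expansion.
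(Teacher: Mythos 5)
Your outer skeleton (expand $\sqn{x^{k+1}-x^*}$ via line \ref{ae:line:3VI}, use $R(x^*)=0$ and $\mu$-strong monotonicity to produce $-2\eta\mu\sqn{u^k-x^*}$, convert that into a contraction of $\sqn{x^k-x^*}$ through the $\alpha$-term, and control $\norm{u^k-\tilde u^k}\le\theta\norm{B_\theta^k(u^k)}$ by the $\tfrac1\theta$-strong monotonicity of $B_\theta^k$ together with \eqref{aux:gradVI}) is exactly the paper's argument. The genuine gap is in how you treat the $Q$-part of $R(u^k)$ in the cross term $2\eta\langle R(u^k),\,u^k-x^k\rangle$: you propose to add and subtract $Q(\tilde u^k)$, bound $\norm{Q(u^k)-Q(\tilde u^k)}\le L_q\norm{u^k-\tilde u^k}$, and rely on monotonicity of $Q$ to ``kill'' the resulting $L_q$-sized contribution. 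That cannot work: after splitting $u^k-x^k=(u^k-\tilde u^k)+(\tilde u^k-x^k)$, the piece $2\eta\langle Q(u^k)-Q(\tilde u^k),\,u^k-\tilde u^k\rangle$ enters with a \emph{plus} sign, where monotonicity only gives the useless lower bound $\ge 0$; bounding it and its companion by Lipschitzness yields terms of order $\eta L_q\sqn{x^k-\tilde u^k}$ (since $\norm{u^k-\tilde u^k}\le\tfrac{1}{2\sqrt{3}}\norm{x^k-\tilde u^k}$), whereas the only negative quantities available under the stated tuning scale like $\eta L_p\sqn{x^k-\tilde u^k}$ (coming from $\tfrac1\theta=2L_p$) and $\eta\mu\sqn{u^k-x^*}$. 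For $L_q\gg L_p$ the coefficient of $\sqn{x^k-\tilde u^k}$ cannot be made nonpositive, so your recursion does not close with $\theta=\tfrac1{2L_p}$, $\eta=\min\{\tfrac1{4\mu},\tfrac1{4L_p}\}$ — and if it needed $L_q$, the outer iteration count (hence the $P$-call complexity) would degrade, defeating the purpose of sliding.

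The missing idea — and what the paper does — is to never let $Q$ appear with its Lipschitz constant in the outer loop: since $B_\theta^k(u^k)=P(x^k)+Q(u^k)+\tfrac1\theta(u^k-x^k)$, one has the exact identity $\tfrac1\theta(u^k-x^k)+R(u^k)=B_\theta^k(u^k)+P(u^k)-P(x^k)$, with $Q(u^k)$ cancelling. Then $2\langle u^k-x^k, R(u^k)\rangle=-\tfrac1\theta\sqn{u^k-x^k}-\theta\sqn{R(u^k)}+\theta\sqn{B_\theta^k(u^k)+P(u^k)-P(x^k)}$, and Young plus $L_p$-Lipschitzness of $P$, the choice $\theta=\tfrac1{2L_p}$, the bound $\sqn{u^k-\tilde u^k}\le\theta^2\sqn{B_\theta^k(u^k)}$, and \eqref{aux:gradVI} leave exactly the nonpositive slack $3\theta\bigl(\sqn{B_\theta^k(u^k)}-\tfrac{L_p^2}{3}\sqn{x^k-\tilde u^k}\bigr)$; the $-\theta\sqn{R(u^k)}$ term is what absorbs $\sqn{x^{k+1}-x^k}$, which is why $\eta\le\tfrac1{4L_p}$ is needed, and the $\alpha=2\mu$ three-point identity turns $-2\eta\mu\sqn{u^k-x^*}$ into the factor $1-2\eta\mu$ on $\sqn{x^k-x^*}$. (If you insist on your decomposition, note $Q(u^k)-Q(\tilde u^k)=B_\theta^k(u^k)-\tfrac1\theta(u^k-\tilde u^k)$, so it can be bounded by $2\norm{B_\theta^k(u^k)}$ without $L_q$ — but that is the paper's identity in disguise.) Finally, your anticipated contraction $1-2\eta\mu+c_1\eta^2$ on $\sqn{x^k-x^*}$ and the resulting $K\ge\tfrac1{\eta\mu}\log(\cdot)$ would only recover \eqref{eq:KVI} up to a factor of two; the exact constant in the theorem comes from the cancellations above.
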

The proof is given in Appendix \ref{sec:slid_VI_strmon}.

\subsection{Solving the auxiliary problem}

As for Algorithm \ref{ae:alg},   we need an auxiliary solver for the subproblem in line \ref{ae:line:2VI}, which ensures that \eqref{aux:gradVI} is met. One can observe that $B_\theta^k(x)$ is $(2L_p + L_q)$-Lipschitz and monotone. For this type of problem, the authors in \cite{yoon2021accelerated} proposes an approach that guarantees convergence on $\norm{B_\theta^k(u^k)}^2$.
\begin{theorem}[\cite{yoon2021accelerated} Corollary 2]\label{aux:thmVI}
	There exists a certain algorithm that, applied  to the subproblem~\eqref{aux:gradVI} with   starting point $x^k$, returns  $u^k$  satisfying   
	\begin{equation}
	\textstyle	\norm{B_\theta^k(u^k)}^2 \leq \frac{D^2\cdot\max\{L_p^2,L_q^2\}\norm{x^k- \tilde u^k}^2}{T^2},
	\end{equation}
	where $D>0$ is some universal numerical constant (independent of $L_p, L_q$, $T$ etc) and $T$ is the number of calls of the function $Q$.
\end{theorem}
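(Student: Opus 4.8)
The plan is to reduce the statement to a known accelerated method for solving monotone Lipschitz operator equations with a fast rate on the \emph{residual}, after a short structural check on $B_\theta^k$. First I would record two elementary facts about $B_\theta^k(x) = P(x^k) + Q(x) + \tfrac1\theta(x-x^k)$, keeping in mind that $x^k$ is frozen throughout the inner loop, so $P(x^k)$ is a fixed vector. (i) Monotonicity: for any $x_1,x_2$, $\langle B_\theta^k(x_1)-B_\theta^k(x_2),\,x_1-x_2\rangle = \langle Q(x_1)-Q(x_2),\,x_1-x_2\rangle + \tfrac1\theta\sqn{x_1-x_2}\ge 0$ by \Cref{ass:L_qVI} (in fact $B_\theta^k$ is $\tfrac1\theta$-strongly monotone, a bonus we will not need). (ii) Lipschitzness: $\norm{B_\theta^k(x_1)-B_\theta^k(x_2)}\le \norm{Q(x_1)-Q(x_2)} + \tfrac1\theta\norm{x_1-x_2}\le (L_q+\tfrac1\theta)\norm{x_1-x_2}$, and with $\theta=\tfrac1{2L_p}$ this is $(2L_p+L_q)$-Lipschitz, hence at most $3\max\{L_p,L_q\}$-Lipschitz.

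Next I would invoke the accelerated (anchored extragradient / ``Extra Anchored Gradient'') scheme of \cite{yoon2021accelerated}: given black-box access to a monotone, $\ell$-Lipschitz operator $F\colon\R^d\to\R^d$ and a starting point $z_0$, after $N$ iterations it outputs $z_N$ with $\sqn{F(z_N)}\le \tfrac{c\,\ell^2}{N^2}\sqn{z_0-z^\star}$ for an absolute constant $c$ and any zero $z^\star$ of $F$ — this is exactly their Corollary~2, and it is tight up to constants. Applying it with $F=B_\theta^k$, $z_0=x^k$, $z^\star=\tilde u^k$, and $\ell=2L_p+L_q\le 3\max\{L_p,L_q\}$ gives $\sqn{B_\theta^k(u^k)}\le \tfrac{9c\max\{L_p^2,L_q^2\}}{N^2}\sqn{x^k-\tilde u^k}$. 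Finally I would do the oracle bookkeeping: each iteration of the scheme evaluates $B_\theta^k$ a constant number of times, and every evaluation of $B_\theta^k$ costs exactly one call of $Q$ (the affine part $P(x^k)+\tfrac1\theta(\cdot-x^k)$ is free once $P(x^k)$ has been computed, a one-time cost at the start of the inner loop). Hence the number $T$ of $Q$-calls satisfies $T=\Theta(N)$; substituting $N=\Theta(T)$ and enlarging the absolute constant to some $D^2$ absorbing $9c$ and the per-iteration factor yields precisely the claimed bound $\sqn{B_\theta^k(u^k)}\le \tfrac{D^2\max\{L_p^2,L_q^2\}}{T^2}\sqn{x^k-\tilde u^k}$.

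The step carrying all the weight is the accelerated guarantee itself — the $\cO(1/N^2)$ decay of the \emph{squared operator norm of the last iterate} for a merely monotone Lipschitz operator — which is not elementary: it rests on the potential-function / performance-estimation analysis of the anchored extragradient method in \cite{yoon2021accelerated} together with the matching lower bound establishing optimality. On our side the only points needing care are (a) confirming that this guarantee holds in the unconstrained setting $F\colon\R^d\to\R^d$ with $F$ only monotone (strong monotonicity of $B_\theta^k$ is not required), and (b) an honest count of the per-iteration oracle cost so that the universal constant $D$ is genuinely independent of $L_p,L_q,T$. No further obstacle is anticipated, since the reduction is purely structural.
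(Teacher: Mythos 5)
Your proposal is correct and follows essentially the same route as the paper: the paper likewise observes that $B_\theta^k$ is monotone and $(2L_p+L_q)$-Lipschitz (since $P(x^k)$ is a frozen vector and the quadratic anchor adds $\tfrac1\theta=2L_p$) and then invokes Corollary~2 of \cite{yoon2021accelerated} for the anchored extragradient method, with the constant absorbed into $D$. Your additional bookkeeping of $Q$-calls per iteration is consistent with how the paper uses $T$ in Section~\ref{sec:compl_min}-style accounting, so there is nothing to correct.
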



\subsection{Complexity of the optimal gradient sliding}
    Leveraging Theorems \ref{ae:thmVI} and \ref{aux:thmVI} while following the same   reasoning as in Section \ref{sec:compl_min}, we obtain the following convergence (inner plus outer loops) for Algorithm \ref{ae:algVI}.   
\begin{theorem} \label{th9}
	Let \Cref{ass:muVI,ass:L_pVI,ass:L_qVI} be satisfied with $\mu \leq L_p \leq L_q$. Then, \Cref{ae:algVI} requires
	\begin{equation*}
	\textstyle	\cO\left({\frac{L_q}{\mu}}\log\frac{1}{\epsilon}\right) ~~ \text{calls of $Q(x)$ ~~and}\quad 
		\cO\left({\frac{L_q}{\mu}}\log\frac{1}{\epsilon}\right) ~~ \text{calls of $P(x)$}
	\end{equation*}
 to find an $\varepsilon$-solution of   problem~\eqref{eq:mainVI}.
\end{theorem}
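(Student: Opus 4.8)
The plan is to combine the outer-loop guarantee of \Cref{ae:thmVI} with the inner-loop guarantee of \Cref{aux:thmVI}, mirroring exactly the bookkeeping done for the minimization case in \Cref{sec:compl_min}. First I would invoke \Cref{ae:thmVI} with the prescribed tuning $\theta = 1/(2L_p)$, $\eta = \min\{1/(4\mu),1/(4L_p)\}$, $\alpha = 2\mu$: provided each inner solve meets the stopping criterion \eqref{aux:gradVI}, it guarantees $\sqn{x^K - x^*}\le\varepsilon$ as soon as the number of outer iterations is
\[
K \;=\; \cO\!\left(\max\Bigl\{1,\tfrac{L_p}{\mu}\Bigr\}\log\tfrac{\sqn{x^0-x^*}}{\varepsilon}\right) \;=\; \cO\!\left(\tfrac{L_p}{\mu}\log\tfrac{1}{\varepsilon}\right),
\]
where the last equality uses $\mu\le L_p$ and treats $\norm{x^0-x^*}$ as a constant.

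Next I would determine how many inner iterations $T$ suffice so that the inexact solution $u^k$ produced by the auxiliary solver of \Cref{aux:thmVI} satisfies \eqref{aux:gradVI}. Since the operator $B_\theta^k$ is $\tfrac1\theta$-strongly monotone (its $Q$-part is monotone and the term $\tfrac1\theta(x-x^k)$ supplies strong monotonicity and Lipschitzness), the solution $\tilde u^k$ is well defined; running the solver of \Cref{aux:thmVI} from the warm start $x^k$ gives $\sqn{B_\theta^k(u^k)} \le \tfrac{D^2\max\{L_p^2,L_q^2\}}{T^2}\sqn{x^k-\tilde u^k}$. Requiring the prefactor to be at most $L_p^2/3$ yields the sufficient choice
\[
T \;=\; \cO\!\left(\max\Bigl\{1,\tfrac{L_q}{L_p}\Bigr\}\right) \;=\; \cO\!\left(\tfrac{L_q}{L_p}\right),
\]
using $L_p\le L_q$. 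The key point making this step painless is that the right-hand side of the bound in \Cref{aux:thmVI} is exactly $\sqn{x^k-\tilde u^k}$, the same quantity appearing on the right-hand side of \eqref{aux:gradVI}, so no auxiliary estimates are needed.

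Then I would count the per-outer-iteration oracle cost of \Cref{ae:algVI} and assemble the totals. Forming $B_\theta^k$ in \cref{ae:line:2VI} costs one evaluation of $P$ at $x^k$, after which the inner solver performs $T$ evaluations of $Q$; evaluating $R(u^k)=Q(u^k)+P(u^k)$ in \cref{ae:line:3VI} costs one further evaluation of each of $P$ and $Q$. Hence each outer iteration uses $2$ calls of $P$ and $T+1$ calls of $Q$. Multiplying by $K$ and substituting $T$, and using $\mu\le L_p\le L_q$, gives $2K=\cO\!\left(\tfrac{L_p}{\mu}\log\tfrac{1}{\varepsilon}\right)=\cO\!\left(\tfrac{L_q}{\mu}\log\tfrac{1}{\varepsilon}\right)$ calls of $P$ and $(T+1)K=\cO\!\left(\tfrac{L_q}{L_p}\cdot\tfrac{L_p}{\mu}\log\tfrac{1}{\varepsilon}\right)=\cO\!\left(\tfrac{L_q}{\mu}\log\tfrac{1}{\varepsilon}\right)$ calls of $Q$, which is the claimed complexity (and in fact the $P$-count is the sharper $\cO(\tfrac{L_p}{\mu}\log\tfrac1\varepsilon)$).

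There is no deep analytical obstacle in this particular argument: it is pure bookkeeping layered on \Cref{ae:thmVI} and \Cref{aux:thmVI}, whose proofs (in \Cref{sec:slid_VI_strmon}) carry the actual work — the potential-function analysis of the non-accelerated extragradient outer loop under strong monotonicity, and the $\sqn{B_\theta^k(\cdot)}$-convergence rate of the auxiliary solver from \cite{yoon2021accelerated}. The only things that require a moment of care are (i) verifying that the universal constant $D$ can be absorbed into the $\cO(\cdot)$ choice of $T$ (equivalently, that \eqref{aux:gradVI} is achievable), and (ii) checking that warm-starting the inner solver at $x^k$ is exactly what makes the inner error bound scale with $\sqn{x^k-\tilde u^k}$, the form demanded by \eqref{aux:gradVI}.
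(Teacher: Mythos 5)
Your proposal is correct and follows essentially the same route as the paper, which proves Theorem \ref{th9} by combining the outer-loop bound of Theorem \ref{ae:thmVI} with the inner-solver guarantee of Theorem \ref{aux:thmVI} and repeating the oracle-call bookkeeping of Section \ref{sec:compl_min} (choosing $T=\cO(\max\{1,L_q/L_p\})$, with $2$ calls of $P$ and $T+1$ calls of $Q$ per outer iteration). Your observation that the $P$-count is in fact the sharper $\cO\bigl(\tfrac{L_p}{\mu}\log\tfrac1\varepsilon\bigr)$ is also consistent with the paper's Table~\ref{tab:comparison0} and Theorem on the saddle application.
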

We also consider the case of monotone VIs (Assumption \ref{ass:muVI} with $\mu = 0$). For this we modify Algorithm \ref{ae:algVI} as in   Appendix \ref{sec:slid_VI_mon} and obtain the following convergence result. 
\begin{theorem} \label{th10}
	Let Assumption \ref{ass:muVI} (with $\mu = 0$), \ref{ass:L_qVI}, \ref{ass:L_pVI} be satisfied and $L_p \leq L_q$. Then, \Cref{ae:algVIm},  described in  Appendix \ref{sec:slid_VI_mon}, requires
	\begin{equation*}
	\textstyle	\cO\left(\frac{L_q}{\varepsilon} \|x^0 - x^* \|^2\right) ~~ \text{calls of $Q(x)$ ~~and}\quad 
		\cO\left(\frac{L_p}{\varepsilon} \|x^0 - x^* \|^2\right)  ~~ \text{calls of $P(x)$}
	\end{equation*}
	 to find an $\varepsilon$-solution of the problem~\eqref{eq:mainVI}. Here $\varepsilon$-solution is measured by the value of the gap function.
\end{theorem}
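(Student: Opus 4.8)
The plan is to mirror the two-loop structure already used for the strongly monotone case (Theorems~\ref{ae:thmVI}, \ref{aux:thmVI}, \ref{th9}), adapting the outer-loop analysis to $\mu = 0$ and replacing the distance-to-solution certificate \eqref{eq:accuracyVI} by a bound on the gap function. First I would analyze the outer loop of the monotone variant of \Cref{ae:algVI} (Appendix~\ref{sec:slid_VI_mon}): with $\mu=0$ the tuning of Theorem~\ref{ae:thmVI} degenerates to $\alpha = 0$ and $\eta = \tfrac{1}{4L_p}$, so \cref{ae:line:3VI} becomes the inexact proximal step $x^{k+1} = x^k - \eta R(u^k)$, where $u^k$ solves $P(x^k) + Q(u^k) + \tfrac1\theta(u^k - x^k)=0$ up to the residual controlled by \eqref{aux:gradVI} with $\theta = \tfrac1{2L_p}$.

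Next, the per-iteration inequality. Using monotonicity of $R$ together with the three-point identity, I would derive, for every $x\in\R^d$,
$$\langle R(u^k), u^k - x\rangle \le \frac{1}{2\eta}\left(\sqn{x^k - x} - \sqn{x^{k+1} - x}\right) - c\,\sqn{x^k - u^k} + (\text{residual from }\|B_\theta^k(u^k)\|),$$
where the residual is absorbed by the $-c\,\sqn{x^k - u^k}$ slack thanks to \eqref{aux:gradVI} and the choice $\theta = \tfrac1{2L_p}$; this is precisely the mechanism already used to prove Theorem~\ref{ae:thmVI}, and setting $\mu=0$ only removes the strong-monotonicity contraction, not the descent. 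Summing over $k=0,\dots,K-1$, telescoping, dividing by $K$, and invoking monotonicity $\langle R(u^k)-R(x), u^k - x\rangle\ge 0$ to pass from $R(u^k)$ to $R(x)$, yields for the averaged iterate $\bar u^K \eqdef \tfrac1K\sum_{k=0}^{K-1}u^k$ (the output of the monotone variant)
$$\langle R(x),\, \bar u^K - x\rangle \le \frac{\sqn{x^0 - x}}{2\eta K}\qquad \forall x,$$
hence $\mathrm{Gap}(\bar u^K)\le\varepsilon$ as soon as $K \ge 2\sqn{x^0-x^*}/(\eta\varepsilon) = \Theta\!\big(L_p\,\sqn{x^0-x^*}/\varepsilon\big)$, where I use the standard restricted gap so that the supremum over $x$ is finite and reduces to the $\sqn{x^0-x^*}$ factor appearing in the statement. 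This is the monotone analogue of Theorem~\ref{ae:thmVI}.

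Then I would count the inner iterations exactly as in Section~\ref{sec:compl_min}: $B_\theta^k$ is monotone and $(2L_p+L_q)$-Lipschitz, so by Theorem~\ref{aux:thmVI} the criterion \eqref{aux:gradVI} is met after $T = \lceil \sqrt{3}\,D\max\{1, L_q/L_p\}\rceil = \cO(\max\{1, L_q/L_p\})$ calls of $Q$. Each outer iteration costs two evaluations of $P$ (at $x^k$ in \cref{ae:line:2VI} and at $u^k$ in \cref{ae:line:3VI}) and $T+1$ evaluations of $Q$ ($T$ inside the subsolver, one at $u^k$ in \cref{ae:line:3VI}). Multiplying by $K = \cO\!\big(L_p\,\sqn{x^0-x^*}/\varepsilon\big)$ gives $2K = \cO\!\big(L_p\,\sqn{x^0-x^*}/\varepsilon\big)$ calls of $P$ and $(T+1)K = \cO\!\big(\max\{L_p,L_q\}\,\sqn{x^0-x^*}/\varepsilon\big) = \cO\!\big(L_q\,\sqn{x^0-x^*}/\varepsilon\big)$ calls of $Q$, the last step using $L_p\le L_q$. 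This is exactly the claim.

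The main obstacle is the outer-loop estimate in the purely monotone regime: without strong monotonicity there is no contraction, so one must (i) produce a clean telescoping inequality whose residual terms are genuinely absorbed by the $\sqn{x^k-u^k}$ slack — delicate because the inexactness in \eqref{aux:gradVI} enters both through the linearization-point mismatch ($P(x^k)$ vs.\ $P(u^k)$) and through the proximal residual — and (ii) control the gap function on an unbounded domain, which forces the use of a restricted/localized gap (à la Nesterov) or an a priori bound on the iterates so that $\sup_x\sqn{x^0-x}$ after averaging is finite and collapses to the $\sqn{x^0-x^*}$ factor in the statement. Checking that the constants in $\eta$ and $\theta$ chosen for Theorem~\ref{ae:thmVI} still make the absorption work at $\mu=0$ (they should, since $\mu$ entered only through $\alpha$ and the $\tfrac1{4\mu}$ branch of $\eta$) is the last routine verification.
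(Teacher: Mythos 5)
Your proposal matches the paper's own proof essentially step for step: the outer loop is analyzed via the key inequality \eqref{ae:eq:1VI} with $\mu=0$ and tuning $\theta=\tfrac{1}{2L_p}$, $\eta=\tfrac{1}{4L_p}$, the inexactness residual is absorbed by the $\sqn{x^k-\tilde u^k}$ slack under criterion \eqref{aux:gradVI}, the inequality is telescoped and averaged, monotonicity transfers $R(u^k)$ to $R(x)$, and the gap is taken in restricted form over a compact set (as in the paper's remark citing Nesterov), giving $K=\cO(L_p\sqn{x^0-x^*}/\varepsilon)$ outer iterations; the inner counting via Theorem~\ref{aux:thmVI} with $T=\cO(\max\{1,L_q/L_p\})$ then yields exactly the stated complexities. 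No gaps; this is the paper's argument.
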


\subsection{Application to distributed saddle-point problem under similarity} \label{sec:slid_VI_sim}

We apply now Algorithm \ref{ae:algVI} to solve a distributed saddle-point problem under statistical similarity, as introduced in 
\cite{beznosikov2021distributed}:
\begin{equation}\label{eq:main2spp}
	\textstyle \min_{y\in\R^{d_y}} \max_{z\in\R^{d_z}} f(y,z) \eqdef \frac{1}{n}\sum_{i=1}^n f_i(y,z).
\end{equation}
\begin{assumption}\label{ass:Lspp}
	Each $f_i(y,z) \colon \R^{d_y} \times \R^{d_z} \rightarrow \R $ is convex-concave and $L$-smooth on $\R^{d_y} \times \R^{d_z}$. 
\end{assumption}
\begin{assumption}\label{ass:mu2spp}
	$f(z,y)$ is $\mu$-strongly convex (first argument)--$\mu$-strongly concave (second argument). 
\end{assumption}
\begin{assumption}\label{ass:deltaspp}
	$f_1(z,y)\ldots, f_n(y,z)$ are $\delta$-related: for all $i$ and for all $y \in \R^{d_y} $ and $z \in \R^{d_z}$,
	\begin{equation*}
	\textstyle
	    \|\nabla^2_{yy} f_i(y,z) - \nabla^2_{yy} f(y,z)\| \leq  \delta, \|\nabla^2_{yz} f_i(y,z) - \nabla^2_{yz} f(y,z)\| \leq  \delta, \|\nabla^2_{zz} f_i(y,z) - \nabla^2_{zz} f(y,z)\| \leq  \delta.
	\end{equation*}
\end{assumption}
Casting (\ref{eq:main2spp}) into the VI formulation (\ref{eq:mainVI}), by taking  $Q(x) = Q(y,z) = [\nabla_y f_1(y,z), -\nabla_z f_1(y,z)]$ and $P(x) = P(y,z) = [\nabla_y [f-f_1](y,z), -\nabla_z [f-f_1](y,z)]$, we have that   $Q$ is monotone and $L$-Lipschitz, $P$ is $\delta$-Lipschitz, $R$ is $\mu$-strongly monotone. Therefore, we can apply Theorems \ref{th9} and \ref{th10} and obtain the following convergence results for Algorithm \ref{ae:algVI} applied to (\ref{eq:main2spp}).
\begin{theorem}
	Let \Cref{ass:Lspp,ass:mu2spp,ass:deltaspp} be satisfied with $\mu \leq \delta \leq L$.
	Then, to find $\varepsilon$-solution of the distributed saddle  problem~\eqref{eq:main2spp}, \Cref{ae:algVI} requires 
	$$ \textstyle \cO\left(\frac{\delta}{\mu}\log\frac{1}{\epsilon} \right) ~\text{communication rounds and }~ \cO\left(\frac{L}{\mu}\log\frac{1}{\epsilon} \right) ~\text{local gradient computations.}$$
\end{theorem}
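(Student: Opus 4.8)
The plan is to reduce the distributed saddle-point problem \eqref{eq:main2spp} to a composite strongly-monotone VI of the form \eqref{eq:mainVI}, apply \Cref{th9} to \Cref{ae:algVI}, and then translate the resulting operator-oracle counts into communication rounds and local gradient computations by inspecting which evaluations require worker participation.

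First I would perform the reduction exactly as sketched in \Cref{sec:slid_VI_sim} (cf.\ Example 2): put $Q(y,z) = [\nabla_y f_1(y,z), -\nabla_z f_1(y,z)]$ and $P(y,z) = [\nabla_y(f-f_1)(y,z), -\nabla_z(f-f_1)(y,z)]$, so that $R = Q+P = [\nabla_y f, -\nabla_z f]$ and, by the convex--concave structure, the zeros of $R$ are precisely the saddle points of $f$. I then verify \Cref{ass:muVI,ass:L_qVI,ass:L_pVI} with the constants $\mu$ (as given), $L_q = L$, and $L_p = \Theta(\delta)$: strong monotonicity of $R$ is \Cref{ass:mu2spp}; monotonicity and $L$-Lipschitzness of $Q$ are \Cref{ass:Lspp} specialized to $f_1$. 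The only step needing a short calculation is $L_p$: the Jacobian of $P$ is the $2\times 2$ block operator with blocks $\nabla^2_{yy}(f-f_1)$, $\nabla^2_{yz}(f-f_1)$, $-\nabla^2_{zy}(f-f_1)$, $-\nabla^2_{zz}(f-f_1)$, each of spectral norm $\le\delta$ by \Cref{ass:deltaspp}; bounding the block-operator norm by a constant times the maximal block norm yields $L_p \le 2\delta$, the constant being absorbed into $\cO(\cdot)$.

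Next I would invoke \Cref{th9} (applicable since $\mu \le \delta \le L$ gives $\mu \le L_p \le L_q$) to get that \Cref{ae:algVI} terminates after $\cO\!\left((L_q/\mu)\log(1/\varepsilon)\right)$ calls of $Q$; and, tracking the count of $P$-evaluations a little more carefully than in the bare statement of \Cref{th9}, I note that the outer loop performs $K = \cO\!\left((L_p/\mu)\log(1/\varepsilon)\right)$ iterations by \eqref{eq:KVI}, with exactly two evaluations of $P$ per iteration (at $x^k$ inside $B_\theta^k$ in line \ref{ae:line:2VI}, and at $u^k$ inside $R(u^k)$ in line \ref{ae:line:3VI}), while the inner solver of \Cref{aux:thmVI} only ever evaluates $Q$, since $P(x^k)$ enters $B_\theta^k$ as a frozen constant. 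Hence $P$ is evaluated $2K = \cO\!\left((L_p/\mu)\log(1/\varepsilon)\right)$ times and $Q$ is evaluated $(T+1)K$ times with inner budget $T = \cO(\max\{1, L_q/L_p\})$ chosen exactly as in \Cref{sec:compl_min} so as to enforce \eqref{aux:gradVI}, giving $\cO\!\left((L_q/\mu)\log(1/\varepsilon)\right)$ $Q$-calls in total. Matching oracle calls to the distributed cost model as in \Cref{sec:similar}, an evaluation of $P$ costs the server a broadcast of the iterate, each worker a reply with $\nabla f_i$, and the server an assembly of $\nabla f - \nabla f_1$, i.e.\ one communication round, whereas an evaluation of $Q=[\nabla_y f_1,-\nabla_z f_1]$ is a purely local master computation and the whole inner loop is communication-free. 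Substituting $L_p=\delta$, $L_q=L$ yields $\cO\!\left((\delta/\mu)\log(1/\varepsilon)\right)$ communication rounds and $\cO\!\left((L/\mu)\log(1/\varepsilon)\right)$ local gradient computations.

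The main obstacle I anticipate is precisely this bookkeeping rather than any hard inequality: one must justify that the subproblem solver in line \ref{ae:line:2VI} touches only $Q$, so that communications scale with the outer count $K\propto\delta/\mu$ and not with the larger $L/\mu$, and confirm that the inner budget enforcing \eqref{aux:gradVI} is only $\cO(L_q/L_p)$ so that $(T+1)K$ collapses to $L_q/\mu$. The estimate $L_p=\Theta(\delta)$ from the block-wise Hessian similarity is routine, but it must be recorded explicitly since it is exactly the hypothesis that allows \Cref{th9} to be applied.
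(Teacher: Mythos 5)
Your proposal is correct and follows essentially the same route as the paper: cast \eqref{eq:main2spp} as the VI \eqref{eq:mainVI} with $Q$ built from $f_1$ and $P$ from $f-f_1$, verify \Cref{ass:muVI,ass:L_qVI,ass:L_pVI} with $L_q=L$ and $L_p=\cO(\delta)$, apply \Cref{th9}, and identify $P$-evaluations with communication rounds and $Q$-evaluations with local master computations. Your extra bookkeeping --- recounting the $P$-calls as $2K=\cO\left(\frac{L_p}{\mu}\log\frac{1}{\varepsilon}\right)$ directly from \eqref{eq:KVI} rather than using the $\cO\left(\frac{L_q}{\mu}\log\frac{1}{\varepsilon}\right)$ bound literally stated in \Cref{th9} --- is precisely the refinement the paper implicitly relies on to obtain the $\cO\left(\frac{\delta}{\mu}\log\frac{1}{\varepsilon}\right)$ communication count, so it is a clarification of the intended argument rather than a departure from it.
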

\begin{theorem}
	Let Assumptions \ref{ass:Lspp}, \ref{ass:mu2spp} (with $\mu = 0$), \ref{ass:deltaspp} be satisfied and $\delta \leq L$.
	Then, to find $\varepsilon$-solution of the distributed saddle  problem~\eqref{eq:main2spp}, \Cref{ae:algVI} requires 
	$$ \textstyle \cO\left(\frac{\delta}{\varepsilon}\|x^0 - x^* \|^2 \right) ~\text{communication rounds and }~ \cO\left(\frac{L}{\varepsilon}\|x^0 - x^* \|^2 \right) ~\text{local computations.}$$
\end{theorem}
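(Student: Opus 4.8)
The plan is to derive this result as a direct corollary of the monotone-VI complexity bound in \Cref{th10}, applied to the variational-inequality reformulation of \eqref{eq:main2spp} with operators $Q(y,z) = [\nabla_y f_1, -\nabla_z f_1]$ and $P(y,z) = [\nabla_y(f-f_1), -\nabla_z(f-f_1)]$ already spelled out above. Since \Cref{th10} is available, the entire task reduces to three verifications: (i) that $Q$ is monotone and $L$-Lipschitz, so that $L_q = L$; (ii) that $P$ is $\delta$-Lipschitz, so that $L_p = \delta$; and (iii) that one call of $Q$ corresponds to one local gradient computation while one call of $P$ corresponds to one communication round. With $\mu = 0$ and $\delta \le L$ (hence $L_p \le L_q$), \Cref{th10} then outputs $\cO(\frac{L_q}{\varepsilon}\sqn{x^0-x^*})$ calls of $Q$ and $\cO(\frac{L_p}{\varepsilon}\sqn{x^0-x^*})$ calls of $P$, which after substitution give exactly the stated local-computation and communication complexities.

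For step (i), I would note that $Q$ is precisely the saddle operator of $f_1$; its monotonicity is immediate from the convex-concavity of $f_1$ in \Cref{ass:Lspp}. For the Lipschitz constant, the Jacobian of $Q$ factors as $\Diag{\mI,-\mI}\,\nabla^2 f_1$, and since $\Diag{\mI,-\mI}$ is orthogonal the operator norm is unchanged, giving $\norm{J_Q} = \norm{\nabla^2 f_1} \le L$ by $L$-smoothness. Thus $Q$ satisfies \Cref{ass:L_qVI} with $L_q = L$.

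For step (ii), $P$ is the saddle operator of $f - f_1$, and its Jacobian is $\Diag{\mI,-\mI}\,\nabla^2(f-f_1)$, so the Lipschitz constant equals $\sup_x \norm{\nabla^2(f-f_1)(x)}$. Writing $\nabla^2(f-f_1) = \nabla^2 f - \nabla^2 f_1$ and invoking \Cref{ass:deltaspp} with index $i=1$ bounds each of the three Hessian blocks $\nabla^2_{yy}$, $\nabla^2_{yz}$, $\nabla^2_{zz}$ of $f - f_1$ in operator norm by $\delta$. Recombining the blocks into the full $(d_y+d_z)\times(d_y+d_z)$ symmetric matrix yields $\norm{\nabla^2(f-f_1)} \le \delta$ up to an absolute constant absorbed into the $\cO(\cdot)$, so $P$ satisfies \Cref{ass:L_pVI} with $L_p = \delta$. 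This block-to-operator-norm passage is the one genuinely saddle-specific point and, together with checking that $R = Q + P$ remains monotone when $\mu = 0$ (which holds since $R$ is the saddle operator of the convex-concave $f$), is where I expect the only real care to be needed.

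Finally, for step (iii) and the accounting: evaluating $Q$ requires only $\nabla f_1$ at the master and is therefore a local computation, whereas evaluating $P = \nabla f - \nabla f_1$ requires every worker to transmit its local gradient to the master and is therefore one communication round; crucially, the inner solver invoked through \Cref{aux:thmVI} only queries $Q$, so all of its iterations are local and do not inflate the communication count. Substituting $L_q = L$ and $L_p = \delta$ into \Cref{th10} then gives $\cO(\frac{L}{\varepsilon}\sqn{x^0-x^*})$ local computations and $\cO(\frac{\delta}{\varepsilon}\sqn{x^0-x^*})$ communication rounds. The $\varepsilon$-solution in \Cref{th10} is measured by the VI gap function, which for \eqref{eq:main2spp} coincides with the standard primal-dual saddle-point gap, so the resulting guarantee is exactly the claimed one.
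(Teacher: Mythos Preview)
Your proposal is correct and follows exactly the paper's approach: the paper does not give a separate proof of this theorem but simply states that, after casting \eqref{eq:main2spp} into the VI form with $Q$ and $P$ as you describe, one has $Q$ monotone and $L$-Lipschitz, $P$ $\delta$-Lipschitz, and $R$ monotone, and then applies \Cref{th10}. Your step-by-step verification (including the block-to-full Hessian norm passage and the communication/local-computation accounting) spells out precisely what the paper leaves implicit.
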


Our communication estimates are optimal, as in \cite{beznosikov2021distributed}, but our algorithm also achieve  optimal local complexity (see Table \ref{tab:comparison0}).

\section{Experiments}

\subsection{Minimization} \label{sec:exp_min}

We consider the  Ridge Regression problem
\begin{align}\label{eq:regr}
    \textstyle \min_{w} \dfrac{1}{2N} \sum\limits_{i=1}^N (w^T x_i  - y_i)^2 + \frac{\lambda}{2} \| w\|^2,
\end{align}
where $w$ is the vector of weights of the model, $\{x_i, y_i\}_{i=1}^N$ is the training dataset, and $\lambda>0$ is the regularization parameter. We consider a network with 25 workers (simulated on a single-CPU machine), and use   two types of datasets, namely: synthetic and real data. Synthetic data permit to control the similarity constant $\delta$. To do so,  we generate data on the server, say  $\{\hat x_i, \hat y_i\}_{i=1}^{n=100}$. Data   on the workers are generated by adding unbiased Gaussian noise to the server data. The lower the variance of this noise, the more similar the data, and thus the smaller  $\delta$. For simulations with real data, we considered the LIBSVM library \cite{chang2011libsvm}. The regularization parameters is set to $\lambda = 0,1$. We compare the proposed algorithm with state-of-the-art schemes, namely: DANE, DANE-HB, Accelerated gradient descent (AcGD), and SPAG. The settings of the methods are made as described in the original papers. For algorithms that assume an absolutely accurate solution of local problems (DANE, SPAG), we use AcGD with an accuracy of $10^{-12}$ as a subsolver.
Results are summarized in  Figure \ref{fig:min}--the first two figures from the left correspond to synthetic data while the other two on real data.   

\begin{figure}[h]
\begin{minipage}{0.24\textwidth}
  \centering
\includegraphics[width =  \textwidth ]{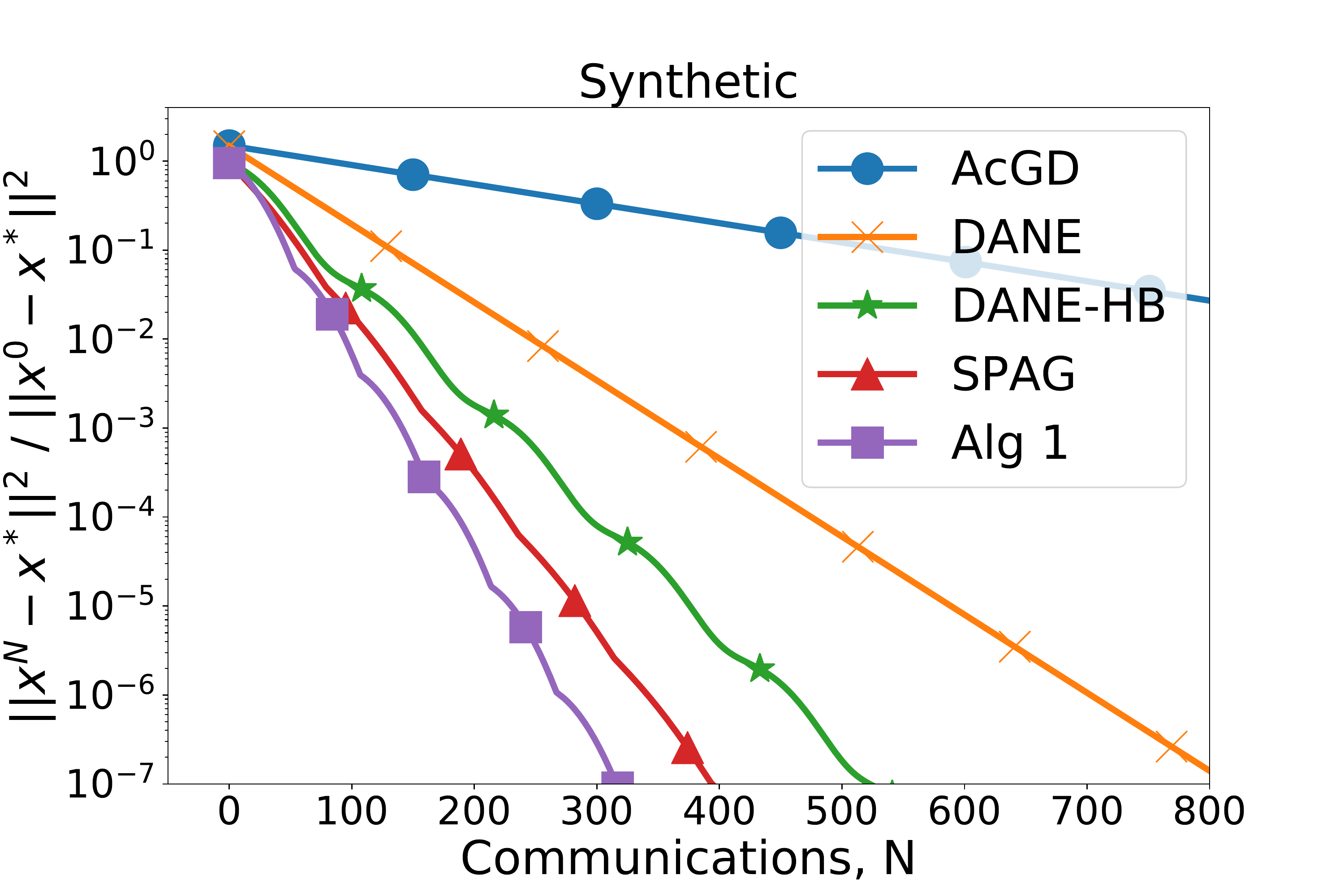}
\end{minipage}%
\begin{minipage}{0.24\textwidth}
  \centering
\includegraphics[width =  \textwidth ]{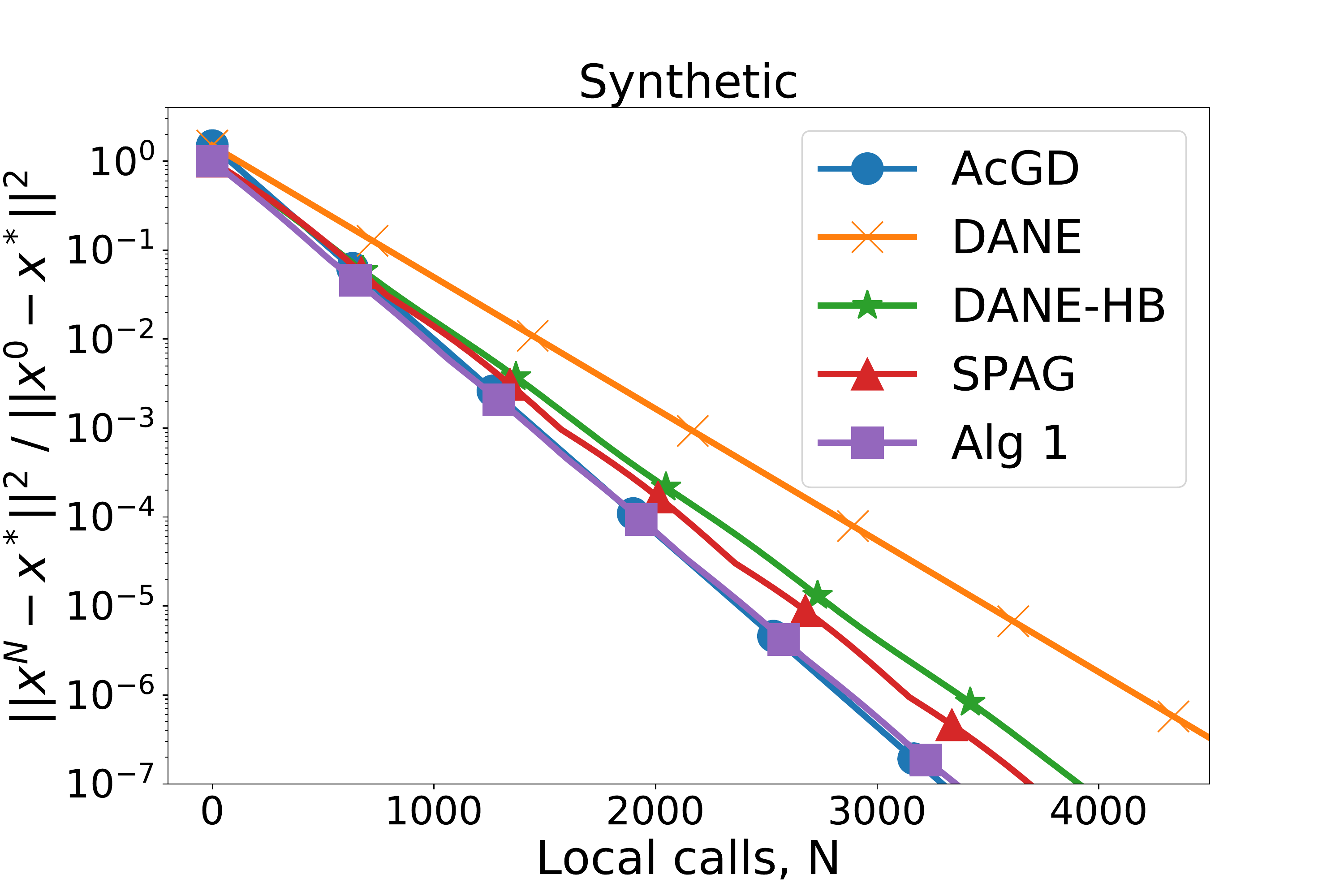}
\end{minipage}%
\begin{minipage}{0.24\textwidth}
  \centering
\includegraphics[width =  \textwidth ]{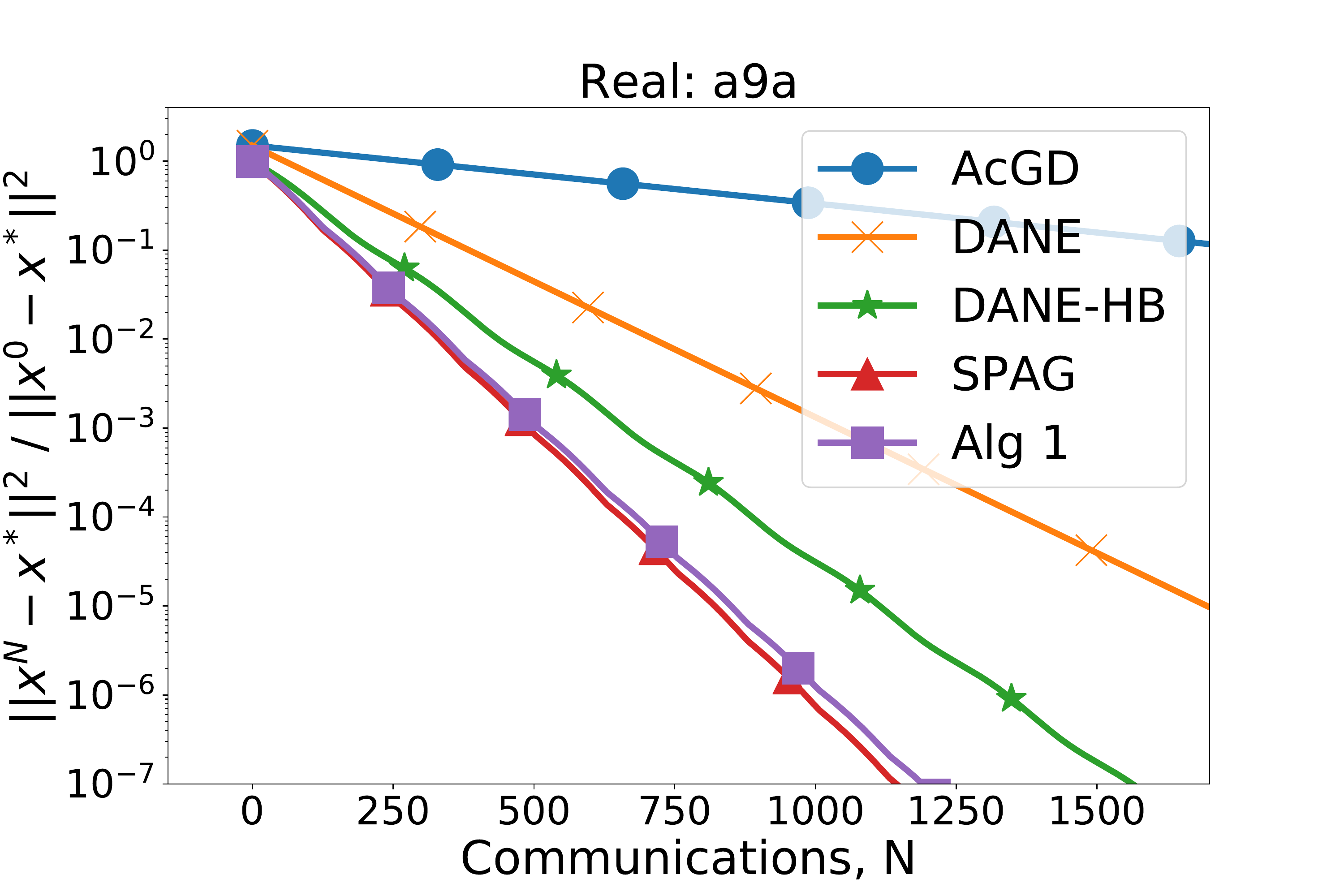}
\end{minipage}%
\begin{minipage}{0.24\textwidth}
  \centering
\includegraphics[width =  \textwidth ]{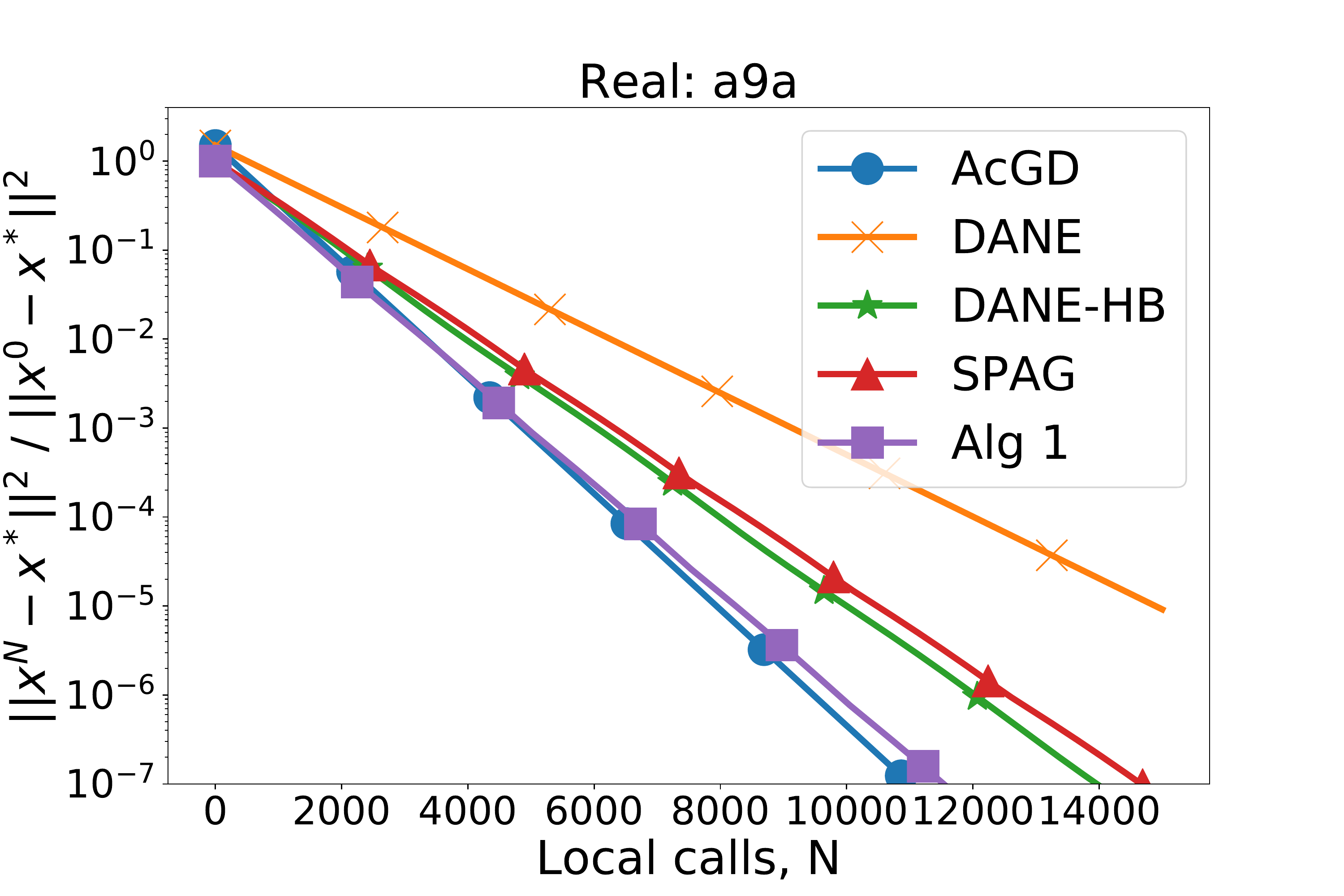}
\end{minipage}%
\caption{\small Ridge regression problem  \eqref{eq:regr}: Comparison of state-of-the-art methods,  under similarity; synthetic data (first two figures on the left) and real data (last to figures from the right).  Distance from optimality vs.    number of communications (first/third panel from the left) and vs.   number of local iterations (second/fourth panel from the left).}
    \label{fig:min}
\end{figure}
The figures show that   our method significantly outperforms   AcGD, and  DANE while compares favorably with   DANE-HB and SPAG  both on communication and local gradient  iterations.

\subsection{Saddle point problems}

Here we consider a modification of \eqref{eq:regr}, the Robust Linear Regression, which leads to the following saddle-point formulation:
\begin{align}\label{eq:rob-regr}
    \textstyle \min_{w} \max_{\norm{r_i}\leq R_r} \frac{1}{2N} \sum\limits_{i=1}^N \left[(w^T (x_i + r_i) - y_i)^2 - \beta \|r_i \|^2\right] + \frac{\lambda}{2} \| w\|^2,
\end{align}
where $r_i$ is the so-called adversarial noise and $\beta>0$ is the regularization associated with it; we set $\lambda = \beta = 0,1$ and $R_r = 0,05$. The network setting and data generation is the same as discussed   in Section \ref{sec:exp_min}.  We compare with the only existing method  for SPPs under similarity, as proposed in   \cite{beznosikov2021distributed}. Results are summarized in Figure \ref{fig:spp}, on synthetic and real data. 

\begin{figure}[h]
\begin{minipage}{0.24\textwidth}
  \centering
\includegraphics[width =  \textwidth ]{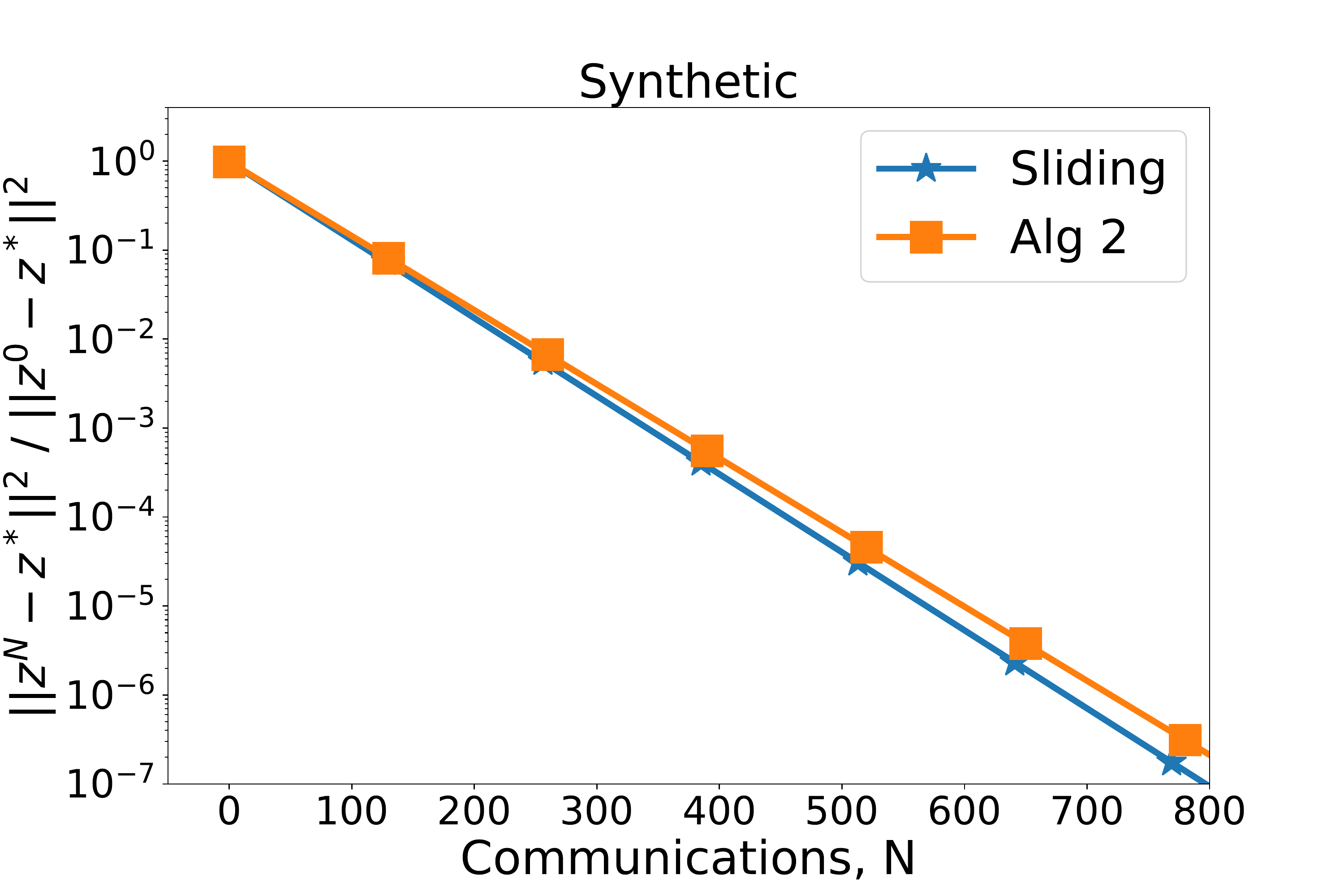}
\end{minipage}%
\begin{minipage}{0.24\textwidth}
  \centering
\includegraphics[width =  \textwidth ]{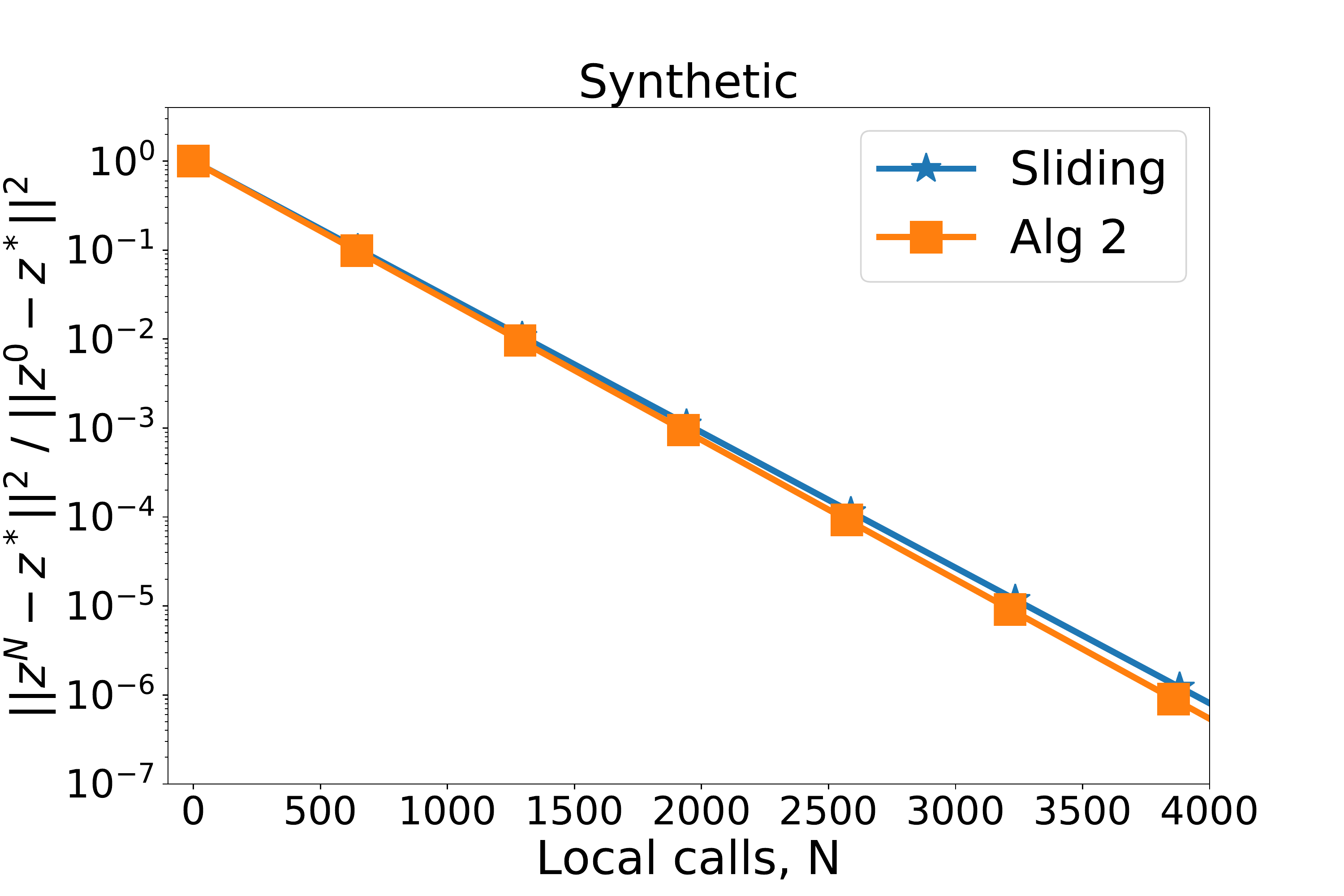}
\end{minipage}%
\begin{minipage}{0.24\textwidth}
  \centering
\includegraphics[width =  \textwidth ]{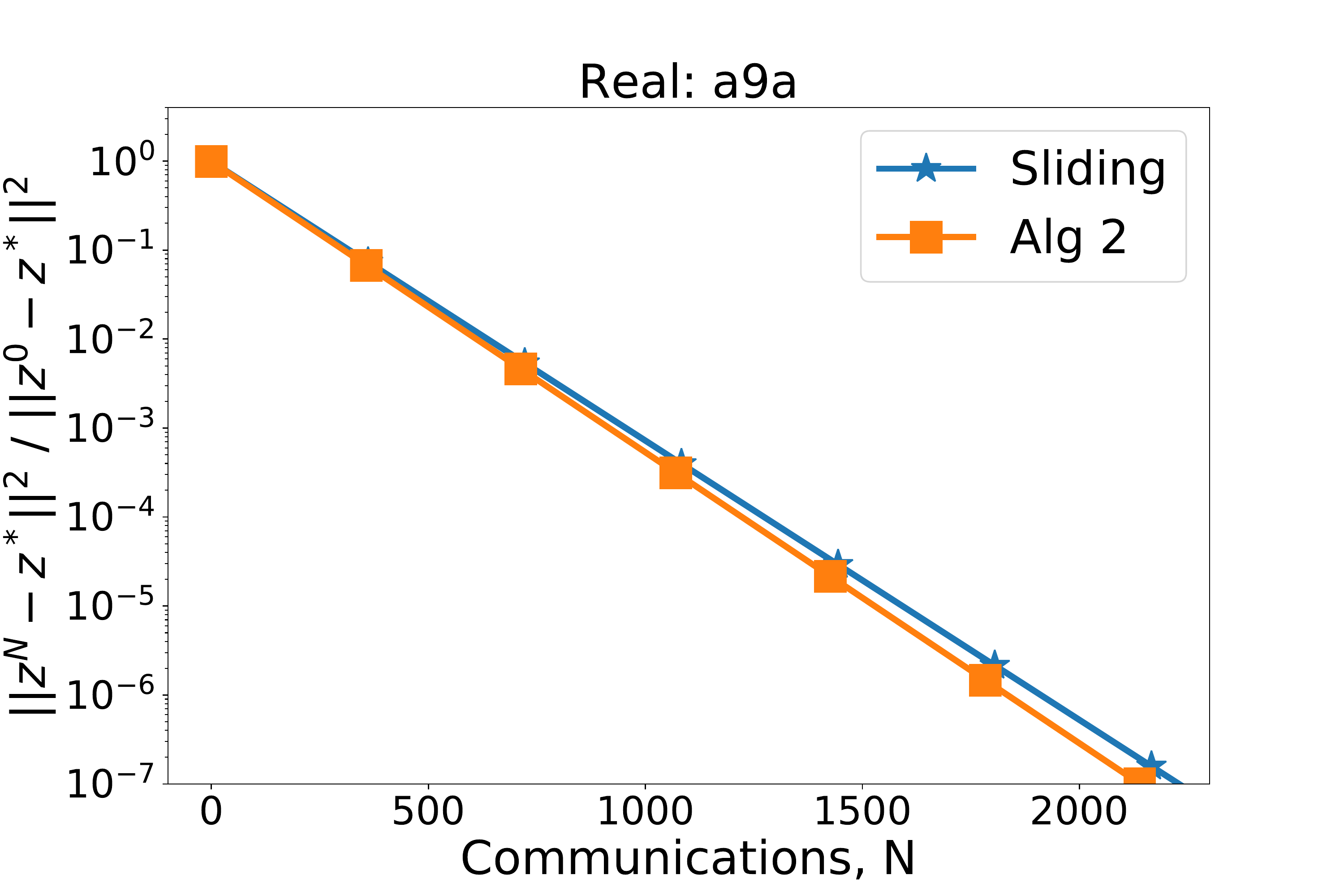}
\end{minipage}%
\begin{minipage}{0.24\textwidth}
  \centering
\includegraphics[width =  \textwidth ]{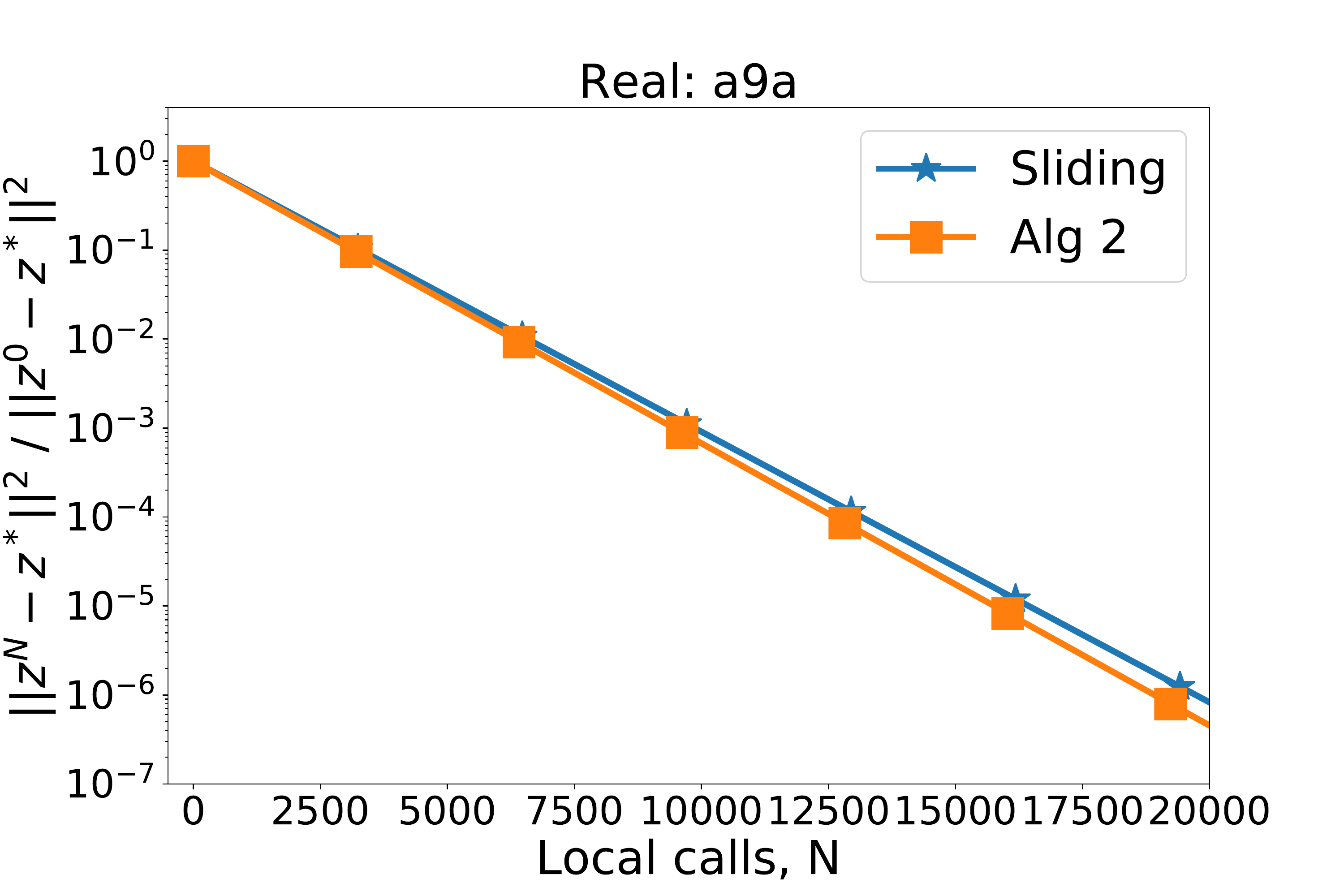}
\end{minipage}%
\caption{\small Robust Linear Regression \eqref{eq:rob-regr}, under similarity assumption: Proposed method vs. Gradient-Sliding;  synthetic data (first two figures on the left) and real data (last to figures from the right). Distance from optimality vs.    number of communications (first/third panel from the left) and vs.   number of local iterations (second/fourth panel from the left).}
    \label{fig:spp}
\end{figure}
It can be seen that our method compares favorably with   \cite{beznosikov2021distributed} both on   communication and gradient  iterations.


\bibliographystyle{plain}
\bibliography{reference.bib}


\newpage

\appendix

\part*{APPENDIX}

\tableofcontents

\newpage

\section{Proofs for Section \ref{sec:slid_min}}

In this section we present a proof of the convergence of Algorithm \ref{ae:alg} in the strongly convex case -- Section \ref{sec:slid_min_strcon}. We also present a modification of Algorithm \ref{ae:alg} for the convex case, as well as a proof of its convergence -- Section \ref{sec:slid_min_conv}.

\subsection{Strongly convex case} \label{sec:slid_min_strcon}
Here we prove Theorem \ref{ae:thm}. First, we need the following lemmas:
\begin{lemma}
	 Consider   \Cref{ae:alg}. Let $\theta$ be defined as in Theorem \ref{ae:thm}: $\theta = \frac{1}{2L_p}$. Then, under Assumptions \ref{ass:mu}-\ref{ass:L_p}, the following inequality holds for all $\bar{x} \in \R^d$
	\begin{equation}
	\label{ae:eq:1}
	\begin{split}
		2\<\bar{x} - x_g^k, \nabla r(x_f^{k+1})>
		\leq&
		2\left[r(\bar{x}) - r(x_f^{k+1})\right] - \mu \sqn{x_f^{k+1} - \bar{x}}
		- \theta\sqn{\nabla r(x_f^{k+1})}
		\\&
		+3\theta\left(\sqn{\nabla A_\theta^k(x_f^{k+1})} - \frac{L_p^2}{3}\sqn{x_g^k- \argmin_{x \in \R^d} A_\theta^k(x)}\right).
	\end{split}
	\end{equation}
\end{lemma}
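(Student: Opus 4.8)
The plan is to establish inequality \eqref{ae:eq:1} by combining three ingredients: the definition of the inexact subproblem solution $x_f^{k+1}$, strong convexity of $r$, and an identity relating $\nabla r$, $\nabla A_\theta^k$, and the gradient-mapping-type term $\frac{1}{\theta}(x_f^{k+1}-x_g^k)$. First I would write out $\nabla A_\theta^k(x_f^{k+1}) = \nabla p(x_g^k) + \frac{1}{\theta}(x_f^{k+1}-x_g^k) + \nabla q(x_f^{k+1})$ and compare it with $\nabla r(x_f^{k+1}) = \nabla p(x_f^{k+1}) + \nabla q(x_f^{k+1})$. Subtracting, $\nabla r(x_f^{k+1}) = \nabla A_\theta^k(x_f^{k+1}) + \left[\nabla p(x_f^{k+1}) - \nabla p(x_g^k)\right] - \frac{1}{\theta}(x_f^{k+1}-x_g^k)$. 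This is the key decomposition that lets us control $\nabla r$ at the (inexactly computed) point $x_f^{k+1}$ in terms of quantities we have handles on.

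Next I would bound $2\langle \bar{x}-x_g^k, \nabla r(x_f^{k+1})\rangle$. Split the inner product as $2\langle \bar{x}-x_f^{k+1}, \nabla r(x_f^{k+1})\rangle + 2\langle x_f^{k+1}-x_g^k, \nabla r(x_f^{k+1})\rangle$. For the first term, apply $\mu$-strong convexity of $r$ (Assumption \ref{ass:mu}): $2\langle \bar{x}-x_f^{k+1}, \nabla r(x_f^{k+1})\rangle \leq 2[r(\bar{x}) - r(x_f^{k+1})] - \mu\sqn{x_f^{k+1}-\bar{x}}$, which produces the first two terms on the right-hand side of \eqref{ae:eq:1}. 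For the second term, substitute the decomposition above for $\nabla r(x_f^{k+1})$ and use Young's/Cauchy--Schwarz inequalities together with $L_p$-smoothness of $p$ (Assumption \ref{ass:L_p}), which gives $\norm{\nabla p(x_f^{k+1})-\nabla p(x_g^k)} \leq L_p\norm{x_f^{k+1}-x_g^k}$, and the choice $\theta = \frac{1}{2L_p}$. The goal here is to absorb the cross terms so that what remains is $-\theta\sqn{\nabla r(x_f^{k+1})}$ plus a controlled multiple of $\sqn{\nabla A_\theta^k(x_f^{k+1})}$ and a controlled multiple of $\sqn{x_f^{k+1}-x_g^k}$.

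The delicate part — and where I expect the main obstacle — is converting the residual term $\sqn{x_f^{k+1}-x_g^k}$ into the form $\frac{L_p^2}{3}\sqn{x_g^k - \argmin_x A_\theta^k(x)}$ that appears in \eqref{ae:eq:1}. Let $x_*^k \eqdef \argmin_x A_\theta^k(x)$. Since $A_\theta^k$ is $\frac{1}{\theta} = 2L_p$-strongly convex, we have $\norm{x_f^{k+1} - x_*^k} \leq \theta\norm{\nabla A_\theta^k(x_f^{k+1})}$; hence by the triangle inequality $\norm{x_f^{k+1}-x_g^k} \leq \norm{x_g^k - x_*^k} + \theta\norm{\nabla A_\theta^k(x_f^{k+1})}$, so $\sqn{x_f^{k+1}-x_g^k} \leq 2\sqn{x_g^k-x_*^k} + 2\theta^2\sqn{\nabla A_\theta^k(x_f^{k+1})}$ (or a similar split with tunable constants). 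Feeding this back and collecting the $\sqn{\nabla A_\theta^k(x_f^{k+1})}$ coefficients against the $-\frac{L_p^2}{3}\theta^2$-type terms, one arrives at the bracketed expression $3\theta\left(\sqn{\nabla A_\theta^k(x_f^{k+1})} - \frac{L_p^2}{3}\sqn{x_g^k-x_*^k}\right)$ after using $\theta = \frac{1}{2L_p}$ to make the arithmetic close. The bookkeeping of constants (the factor $3\theta$, the $\frac{L_p^2}{3}$, and the exact cancellations) is the part that requires care; everything else is a routine application of strong convexity and smoothness. Note that this bracketed term is designed to be nonpositive precisely when the termination criterion \eqref{aux:grad} holds, which is how the Lemma will later be used in the proof of Theorem \ref{ae:thm}.
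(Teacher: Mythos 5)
Your overall route coincides with the paper's: split $2\langle \bar{x}-x_g^k,\nabla r(x_f^{k+1})\rangle$ at $x_f^{k+1}$, use $\mu$-strong convexity of $r$ on the first piece, and on the second piece produce $-\theta\sqn{\nabla r(x_f^{k+1})}$ plus $2\theta\sqn{\nabla A_\theta^k(x_f^{k+1})}+2\theta L_p^2\sqn{x_f^{k+1}-x_g^k}$ via the decomposition $\theta^{-1}(x_f^{k+1}-x_g^k)+\nabla r(x_f^{k+1})=\nabla A_\theta^k(x_f^{k+1})+\nabla p(x_f^{k+1})-\nabla p(x_g^k)$ together with $L_p$-Lipschitzness of $\nabla p$; with $\theta=\tfrac{1}{2L_p}$ this leaves a net term $-\tfrac{1}{2\theta}\sqn{x_f^{k+1}-x_g^k}$. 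Up to here your plan matches the paper's proof step for step.

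The step you yourself flag as delicate is, however, written in the wrong direction, and as stated it would fail. Writing $x_*^k \eqdef \argmin_{x\in\R^d}A_\theta^k(x)$, at that point $\sqn{x_f^{k+1}-x_g^k}$ carries a \emph{negative} coefficient, so your upper bound $\sqn{x_f^{k+1}-x_g^k}\le 2\sqn{x_g^k-x_*^k}+2\theta^2\sqn{\nabla A_\theta^k(x_f^{k+1})}$ cannot be ``fed back'': multiplying it by $-\tfrac{1}{2\theta}$ reverses the inequality, and no retuning of constants in a bound of that orientation fixes this (it would produce $+\sqn{x_g^k-x_*^k}$ with the wrong sign, not the needed $-\tfrac{L_p^2}{3}\sqn{x_g^k-x_*^k}$). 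What is needed, and what the paper does, is the opposite split applied to $\norm{x_g^k-x_*^k}$: from $\norm{x_g^k-x_*^k}\le\norm{x_f^{k+1}-x_g^k}+\norm{x_f^{k+1}-x_*^k}$ one gets
\begin{equation*}
-\sqn{x_f^{k+1}-x_g^k}\;\le\;-\tfrac{1}{2}\sqn{x_g^k-x_*^k}+\sqn{x_f^{k+1}-x_*^k},
\end{equation*}
and then the $\tfrac{1}{\theta}$-strong convexity of $A_\theta^k$ gives $\sqn{x_f^{k+1}-x_*^k}\le\theta^2\sqn{\nabla A_\theta^k(x_f^{k+1})}$. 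Collecting coefficients, $2\theta+\tfrac{\theta}{2}\le 3\theta$ on $\sqn{\nabla A_\theta^k(x_f^{k+1})}$ and $\tfrac{1}{4\theta}=3\theta\cdot\tfrac{1}{12\theta^2}=3\theta\cdot\tfrac{L_p^2}{3}$, which is exactly the bracket in \eqref{ae:eq:1}. So the repair is local and uses precisely the ingredients you listed (triangle inequality plus strong convexity of $A_\theta^k$), but the orientation of the split is essential, not a bookkeeping detail.
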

\begin{proof}
Using $\mu$-strong convexity of $r(x)$, we get
	\begin{align*}
		2\<\bar{x} - x_g^k, \nabla r(x_f^{k+1})>
		=&
		2\<\bar{x} - x_f^{k+1}, \nabla r(x_f^{k+1})>
		+2\<x_f^{k+1} - x_g^k, \nabla r(x_f^{k+1})>
		\\\leq&
		2\left[r(\bar{x}) - r(x_f^{k+1})\right] - \mu \sqn{x_f^{k+1} - \bar{x}}
		+2\<x_f^{k+1} - x_g^k, \nabla r(x_f^{k+1})>
		\\=&
		2\left[r(\bar{x}) - r(x_f^{k+1})\right] - \mu \sqn{x_f^{k+1} - \bar{x}}
		+2\theta\<\theta^{-1}(x_f^{k+1} - x_g^k), \nabla r(x_f^{k+1})>
		\\=&
		2\left[r(\bar{x}) - r(x_f^{k+1})\right] - \mu \sqn{x_f^{k+1} - \bar{x}}
		\\&
		-\frac{1}{\theta}\sqn{x_f^{k+1} - x_g^k} - \theta\sqn{\nabla r(x_f^{k+1})}
		\\&
		+\theta\sqn{\theta^{-1}(x_f^{k+1} - x_g^k) + \nabla r(x_f^{k+1})}.
	\end{align*}
The definition of $A_\theta^k(x)$ and $L_p$-Lipschitzness of $\nabla p$ (Assumption \ref{ass:L_p}) give
	\begin{align*}
		2\<\bar{x} - x_g^k, \nabla r(x_f^{k+1})>
		\leq&
		2\left[r(\bar{x}) - r(x_f^{k+1})\right] - \mu \sqn{x_f^{k+1} - \bar{x}}
		-\frac{1}{\theta}\sqn{x_f^{k+1} - x_g^k} - \theta\sqn{\nabla r(x_f^{k+1})}
		\\&
		+\theta\sqn{\nabla A_\theta^k(x_f^{k+1}) + \nabla p(x_f^{k+1}) - \nabla p (x_g^k)}
		\\\leq&
		2\left[r(\bar{x}) - r(x_f^{k+1})\right] - \mu \sqn{x_f^{k+1} - \bar{x}}
		-\frac{1}{\theta}\sqn{x_f^{k+1} - x_g^k} - \theta\sqn{\nabla r(x_f^{k+1})}
		\\&
		+2\theta\sqn{\nabla A_\theta^k(x_f^{k+1})}
		+2\theta L_p^2\sqn{x_f^{k+1} - x_g^k}
		\\=&
		2\left[r(\bar{x}) - r(x_f^{k+1})\right] - \mu \sqn{x_f^{k+1} - \bar{x}}
		-\frac{1}{\theta}\left(1 - 2\theta^2L_p^2\right) \sqn{x_f^{k+1} - x_g^k}
		\\&
		- \theta\sqn{\nabla r(x_f^{k+1})}
		+2\theta\sqn{\nabla A_\theta^k(x_f^{k+1})}.
	\end{align*}
With $\theta = \frac{1}{2L_p}$, we have
	\begin{align*}
		2\<\bar{x} - x_g^k, \nabla r(x_f^{k+1})>
		\leq&
		2\left[r(\bar{x}) - r(x_f^{k+1})\right] - \mu \sqn{x_f^{k+1} - \bar{x}}
		-\frac{1}{2\theta}\sqn{x_f^{k+1} - x_g^k}
		\\&
		- \theta\sqn{\nabla r(x_f^{k+1})}
		+2\theta\sqn{\nabla A_\theta^k(x_f^{k+1})}
		\\=&
		2\left[r(\bar{x}) - r(x_f^{k+1})\right] - \mu \sqn{x_f^{k+1} - \bar{x}}
		-\frac{1}{4\theta}\sqn{x_g^k- \argmin_{x \in \R^d} A_\theta^k(x)}
		\\&
		+\frac{1}{2\theta}\sqn{x_f^{k+1} - \argmin_{x \in \R^d} A_\theta^k(x)}
		- \theta\sqn{\nabla r(x_f^{k+1})}
		+2\theta\sqn{\nabla A_\theta^k(x_f^{k+1})}.
	\end{align*}
One can observe that $A_\theta^k(x)$ is $\frac{1}{\theta}$-strongly convex. Hence,
	\begin{align*}
		2\<\bar{x} - x_g^k, \nabla r(x_f^{k+1})>
		\leq&
		2\left[r(\bar{x}) - r(x_f^{k+1})\right] - \mu \sqn{x_f^{k+1} - \bar{x}}
		-\frac{1}{4\theta}\sqn{x_g^k- \argmin_{x \in \R^d} A_\theta^k(x)}
		\\&
		+\frac{\theta}{2}\sqn{\nabla A_\theta^k(x_f^{k+1})}
		- \theta\sqn{\nabla r(x_f^{k+1})}
		+2\theta\sqn{\nabla A_\theta^k(x_f^{k+1})}
		\\\leq&
		2\left[r(\bar{x}) - r(x_f^{k+1})\right] - \mu \sqn{x_f^{k+1} - \bar{x}}
		-\frac{1}{4\theta}\sqn{x_g^k- \argmin_{x \in \R^d} A_\theta^k(x)}
		\\&
		+3\theta\sqn{\nabla A_\theta^k(x_f^{k+1})}
		- \theta\sqn{\nabla r(x_f^{k+1})}
		\\=&
		2\left[r(\bar{x}) - r(x_f^{k+1})\right] - \mu \sqn{x_f^{k+1} - \bar{x}}
		- \theta\sqn{\nabla r(x_f^{k+1})}
		\\&
		+3\theta\left(\sqn{\nabla A_\theta^k(x_f^{k+1})} - \frac{1}{12\theta^2}\sqn{x_g^k- \argmin_{x \in \R^d} A_\theta^k(x)}\right)
		\\=&
		2\left[r(\bar{x}) - r(x_f^{k+1})\right] - \mu \sqn{x_f^{k+1} - \bar{x}}
		- \theta\sqn{\nabla r(x_f^{k+1})}
		\\&
		+3\theta\left(\sqn{\nabla A_\theta^k(x_f^{k+1})} - \frac{L_p^2}{3}\sqn{x_g^k- \argmin_{x \in \R^d} A_\theta^k(x)}\right).
	\end{align*}
This completes the proof of Lemma.
\end{proof}

\begin{lemma}
	Consider  \Cref{ae:alg} for Problem \ref{eq:main} under Assumption \ref{ass:mu}-\ref{ass:L_p}, with the following tuning: 
\begin{equation}\label{ae:choice}
    \tau = \min\left\{1,\frac{\sqrt{\mu}}{2\sqrt{L_p}}\right\}, \quad \theta = \frac{1}{2L_p}, \quad  \eta = \min \left\{\frac{1}{2\mu},\frac{1}{2\sqrt{\mu L_p}}\right\}, \quad \alpha = \mu,
	\end{equation}
and let $x_f^{k+1}$   in \cref{ae:line:2} satisfy    
	\begin{equation}\label{aux:grad_app}
	\sqn{\nabla A_\theta^k(x_f^{k+1})} \leq  \frac{L_p^2}{3}\sqn{x_g^k- \argmin_{x \in \R^d} A_\theta^k(x)}.
	\end{equation}
Then, the following inequality holds:
	\begin{equation}\label{ae:rec}
		\frac{1}{\eta}\sqn{x^{k+1} - x^*}
		+
		\frac{2}{\tau}\left[r(x_f^{k+1}) - r(x^*)\right]
		\leq
		\left(1 - \rho\right)
		\left[\frac{1}{\eta}\sqn{x^k - x^*}
		+\frac{2}{\tau}\left[r(x_f^k) - r(x^*)\right]\right],
	\end{equation}
	where  
	\begin{equation}\label{ae:rho}
		\rho := \frac{1}{2}\min \left\{1,\sqrt{\frac{\mu}{L_p}}\right\}.
	\end{equation}
\end{lemma}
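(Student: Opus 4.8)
The plan is to establish the one-step contraction \eqref{ae:rec} by building a Lyapunov function of the form $\Phi^k := \frac{1}{\eta}\sqn{x^k - x^*} + \frac{2}{\tau}[r(x_f^k) - r(x^*)]$ and showing $\Phi^{k+1} \leq (1-\rho)\Phi^k$ with $\rho$ as in \eqref{ae:rho}. First I would expand $\sqn{x^{k+1} - x^*}$ using the update rule in \cref{ae:line:3}, namely $x^{k+1} = x^k + \eta\alpha(x_f^{k+1} - x^k) - \eta\nabla r(x_f^{k+1})$, to obtain
\begin{equation*}
\textstyle \sqn{x^{k+1} - x^*} = \sqn{x^k - x^*} + 2\eta\alpha\<x_f^{k+1} - x^k, x^k - x^*> - 2\eta\<\nabla r(x_f^{k+1}), x^k - x^*> + \eta^2\sqn{\alpha(x_f^{k+1} - x^k) - \nabla r(x_f^{k+1})}.
\end{equation*}
The cross term $-2\eta\<\nabla r(x_f^{k+1}), x^k - x^*>$ is the crucial one; I would rewrite $x^k - x^* = (x^k - x_g^k) + (x_g^k - x^*)$ and handle the $x_g^k - x^*$ part using the previous Lemma (inequality \eqref{ae:eq:1} with $\bar x = x^*$), whose right-hand side contributes $2[r(x^*) - r(x_f^{k+1})] - \mu\sqn{x_f^{k+1} - x^*} - \theta\sqn{\nabla r(x_f^{k+1})}$ plus a nonpositive remainder term (nonpositive precisely because of the inexactness condition \eqref{aux:grad_app}).

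Next I would deal with the $x^k - x_g^k$ piece. Using \cref{ae:line:1}, $x_g^k = \tau x^k + (1-\tau)x_f^k$, so $x^k - x_g^k = \frac{1-\tau}{\tau}(x_f^k - x_g^k)$ and hence $\<\nabla r(x_f^{k+1}), x^k - x_g^k> = \frac{1-\tau}{\tau}\<\nabla r(x_f^{k+1}), x_f^k - x_g^k>$. By convexity of $r$ (really by the gradient inequality, using that $r$ is convex), $\<\nabla r(x_f^{k+1}), x_f^k - x_f^{k+1}> \leq r(x_f^k) - r(x_f^{k+1})$, and I would also need to relate $x_f^{k+1} - x_g^k$ to the quantities already bounded. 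This is where the telescoping structure emerges: the $r(x_f^k) - r(x_f^{k+1})$ difference, scaled by $\frac{2(1-\tau)}{\tau}$, is exactly what turns into the $\frac{2}{\tau}[r(x_f^k) - r(x^*)]$ term on the right of \eqref{ae:rec} after adding back $\frac{2}{\tau}[r(x_f^{k+1}) - r(x^*)]$ on the left. Similarly the $\alpha = \mu$ choice makes the term $-\mu\sqn{x_f^{k+1} - x^*}$ combine with the $2\eta\alpha\<x_f^{k+1} - x^k, x^k - x^*>$ term and the $\eta^2\alpha^2\sqn{x_f^{k+1} - x^k}$ term; by a completion-of-squares / Young's inequality argument these collapse into a clean $-\rho\cdot\frac{1}{\eta}\sqn{x^k - x^*}$ contribution.

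The main obstacle I anticipate is the bookkeeping that controls the $\eta^2\sqn{\alpha(x_f^{k+1}-x^k) - \nabla r(x_f^{k+1})}$ quadratic error term: one must show it is absorbed by the negative terms $-\theta\sqn{\nabla r(x_f^{k+1})}$ (coming from Lemma \eqref{ae:eq:1}) and the residual $-\frac{1}{4\theta}\sqn{x_g^k - \argmin A_\theta^k}$-type terms, and this is exactly where the specific tuning $\theta = \frac{1}{2L_p}$, $\eta = \min\{\frac{1}{2\mu}, \frac{1}{2\sqrt{\mu L_p}}\}$, $\tau = \min\{1, \frac{\sqrt\mu}{2\sqrt{L_p}}\}$ is forced. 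I would split into the two regimes $\mu \geq L_p$ (so $\tau = 1$, $\eta = \frac{1}{2\mu}$, $\rho = \frac12$) and $\mu < L_p$ (so $\tau = \frac{\sqrt\mu}{2\sqrt{L_p}}$, $\eta = \frac{1}{2\sqrt{\mu L_p}}$, $\rho = \frac12\sqrt{\mu/L_p}$), and in each regime verify the resulting scalar inequalities among the coefficients are satisfied — these are elementary but must be checked carefully, since the whole contraction rate hinges on them. Once the recursion \eqref{ae:rec} is in hand, iterating it $K$ times and comparing with \eqref{eq:K} will immediately give Theorem \ref{ae:thm}, but that final step is outside the scope of this lemma.
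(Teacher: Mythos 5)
Your overall plan coincides with the paper's: the same Lyapunov function, the same expansion of $\sqn{x^{k+1}-x^*}$ via \cref{ae:line:3}, the same use of \cref{ae:line:1} to trade $x^k-x^*$ for the pair $x^*-x_g^k$ and $x_f^k-x_g^k$, and the same roles for the inexactness condition \eqref{aux:grad_app} and the tuning. The one place where your proposal genuinely deviates, and where as written it would fail, is the treatment of the term $\tfrac{2(1-\tau)}{\tau}\<\nabla r(x_f^{k+1}), x_f^k - x_g^k>$. The paper handles it by applying the key inequality \eqref{ae:eq:1} a \emph{second} time, with $\bar{x}=x_f^k$; this produces not only the telescoping difference $\tfrac{2(1-\tau)}{\tau}[r(x_f^k)-r(x_f^{k+1})]$ but also an extra negative contribution $-\tfrac{(1-\tau)\theta}{\tau}\sqn{\nabla r(x_f^{k+1})}$, so that the total coefficient of $\sqn{\nabla r(x_f^{k+1})}$ after absorbing the update-step square $\tfrac{1}{\eta}\sqn{x^{k+1}-x^k}\le 2\eta\alpha^2\sqn{x_f^{k+1}-x^k}+2\eta\sqn{\nabla r(x_f^{k+1})}$ becomes $2\eta-\tfrac{\theta}{\tau}=2\eta-\tfrac{1}{2\tau L_p}\le 0$ under the stated tuning. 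Your substitute — plain convexity $\<\nabla r(x_f^{k+1}), x_f^k-x_f^{k+1}>\le r(x_f^k)-r(x_f^{k+1})$ plus an unspecified way to "relate $x_f^{k+1}-x_g^k$" — loses exactly this $1/\tau$-amplified negative gradient term. With only the single application of \eqref{ae:eq:1} at $\bar{x}=x^*$ you retain just $-\theta\sqn{\nabla r(x_f^{k+1})}$, and the resulting coefficient $2\eta-\theta=\tfrac{1}{\sqrt{\mu L_p}}-\tfrac{1}{2L_p}$ is strictly positive whenever $\mu<4L_p$, i.e., in the entire regime of interest, so the quadratic error term is not absorbed and the contraction \eqref{ae:rec} does not follow. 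Moreover, the leftover piece $\tfrac{2(1-\tau)}{\tau}\<\nabla r(x_f^{k+1}), x_f^{k+1}-x_g^k>$ has no sign a priori; making it negative requires the approximate prox-optimality $\theta^{-1}(x_f^{k+1}-x_g^k)\approx-\nabla r(x_f^{k+1})$ together with the $L_p$-Lipschitzness of $\nabla p$ and \eqref{aux:grad_app} — which is precisely the content of \eqref{ae:eq:1}. So the fix is simply to invoke that inequality once more with $\bar{x}=x_f^k$ rather than bare convexity.

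Two smaller points: your identity should read $x_g^k-x^k=\tfrac{1-\tau}{\tau}(x_f^k-x_g^k)$ (you wrote it with the opposite sign), and the final verification does not need a case split at $\mu\ge L_p$ versus $\mu<L_p$; it suffices to check the two scalar inequalities $\eta\alpha\le\tfrac12$ and $\eta\le\tfrac{1}{4\tau L_p}$, which hold directly from $\eta\le\tfrac{1}{2\mu}$, $\eta\le\tfrac{1}{2\sqrt{\mu L_p}}$ and $\tau\le\tfrac{\sqrt{\mu}}{2\sqrt{L_p}}$, after which $1-\eta\mu\le 1-\rho$ and $1-\tau\le 1-\rho$ give \eqref{ae:rec}.
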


\begin{proof}
Using \cref{ae:line:3} of \Cref{ae:alg}, we get
	\begin{align*}
		\frac{1}{\eta}\sqn{x^{k+1} - x^*}
		=&
		\frac{1}{\eta}\sqn{x^k - x^*}
		+\frac{2}{\eta}\<x^{k+1} - x^k, x^k - x^*>
		+\frac{1}{\eta}\sqn{x^{k+1} - x^k}
		\\=&
		\frac{1}{\eta}\sqn{x^k - x^*}
		+2\alpha\<x_f^{k+1} -x^k, x^k - x^*>
		\\&
		-2\<\nabla r(x_f^{k+1}), x^k - x^*>
		+\frac{1}{\eta}\sqn{x^{k+1} - x^k}
		\\=&
		\frac{1}{\eta}\sqn{x^k - x^*}
		+\alpha\sqn{x_f^{k+1} - x^*}
		-\alpha\sqn{x^k - x^*}
		-\alpha\sqn{x_f^{k+1} - x^k}
		\\&
		-2\<\nabla r(x_f^{k+1}), x^k - x^*>
		+\frac{1}{\eta}\sqn{x^{k+1} - x^k}.
	\end{align*}
\Cref{ae:line:1} of \Cref{ae:alg} gives
	\begin{align*}
		\frac{1}{\eta}\sqn{x^{k+1} - x^*}
		=&
		\left(\frac{1}{\eta}-\alpha\right)\sqn{x^k - x^*}
		+\alpha\sqn{x_f^{k+1} - x^*}
		+\frac{1}{\eta}\sqn{x^{k+1} - x^k}
		-\alpha\sqn{x_f^{k+1} - x^k}
		\\&
		+2\<\nabla r(x_f^{k+1}), x^* - x_g^k>
		+\frac{2(1-\tau)}{\tau}\<\nabla r(x_f^{k+1}), x_f^k - x_g^k>.
	\end{align*}
Using \eqref{ae:eq:1} with $\bar{x} = x^*$ and $\bar{x} = x_f^k$, we get
	\begin{align*}
		\frac{1}{\eta}\sqn{x^{k+1} - x^*}
		\leq&
		\left(\frac{1}{\eta}-\alpha\right)\sqn{x^k - x^*}
		+\alpha\sqn{x_f^{k+1} - x^*}
		+\frac{1}{\eta}\sqn{x^{k+1} - x^k}
		-\alpha\sqn{x_f^{k+1} - x^k}
		\\&
		+2\left[r(x^*) - r(x_f^{k+1})\right] - \mu\sqn{x_f^{k+1} - x^*}
		+\frac{2(1-\tau)}{\tau}\left[r(x_f^{k}) - r(x_f^{k+1})\right]
		\\&
		-\frac{1}{2\tau L_p}\sqn{\nabla r(x_f^{k+1})}
		+\frac{3\theta}{\tau}\left(\sqn{\nabla A_\theta^k(x_f^{k+1})} - \frac{L_p^2}{3}\sqn{x_g^k- \argmin_{x \in \R^d} A_\theta^k(x)}\right)
		\\=&
		\left(\frac{1}{\eta}-\alpha\right)\sqn{x^k - x^*}
		+(\alpha - \mu)\sqn{x_f^{k+1} - x^*}
		-\alpha\sqn{x_f^{k+1} - x^k}
		\\&
		+\frac{1}{\eta}\sqn{\eta\alpha(x_f^{k+1} - x^k) - \eta \nabla r(x_f^{k+1})}
		-\frac{1}{2\tau L_p}\sqn{\nabla r(x_f^{k+1})}
		\\&
		+\frac{2(1-\tau)}{\tau}\left[r(x_f^k) - r(x^*)\right]
		-\frac{2}{\tau}\left[r(x_f^{k+1}) - r(x^*)\right]
		\\&
		+\frac{3\theta}{\tau}\left(\sqn{\nabla A_\theta^k(x_f^{k+1})} - \frac{L_p^2}{3}\sqn{x_g^k- \argmin_{x \in \R^d} A_\theta^k(x)}\right)
		\\\leq&
		\left(\frac{1}{\eta}-\alpha\right)\sqn{x^k - x^*}
		+(\alpha - \mu)\sqn{x_f^{k+1} - x^*}
		\\&
		+\alpha(2\eta\alpha - 1)\sqn{x_f^{k+1} - x^k}
		+\left(2\eta - \frac{1}{2\tau L_p}\right)\sqn{\nabla r(x_f^{k+1})}
		\\&
		+\frac{2(1-\tau)}{\tau}\left[r(x_f^k) - r(x^*)\right]
		-\frac{2}{\tau}\left[r(x_f^{k+1}) - r(x^*)\right]
		\\&
		+\frac{3\theta}{\tau}\left(\sqn{\nabla A_\theta^k(x_f^{k+1})} - \frac{L_p^2}{3}\sqn{x_g^k- \argmin_{x \in \R^d} A_\theta^k(x)}\right).
	\end{align*}
The choice of $\alpha,\eta,\tau$ defined by \eqref{ae:choice} gives
	\begin{align*}
		\frac{1}{\eta}\sqn{x^{k+1} - x^*}
		\leq&
		\left(\frac{1}{\eta}-\alpha\right)\sqn{x^k - x^*}
		+\frac{2(1-\tau)}{\tau}\left[r(x_f^k) - r(x^*)\right]
		-\frac{2}{\tau}\left[r(x_f^{k+1}) - r(x^*)\right]
		\\&
		+\frac{3\theta}{\tau}\left(\sqn{\nabla A_\theta^k(x_f^{k+1})} - \frac{L_p^2}{3}\sqn{x_g^k- \argmin_{x \in \R^d} A_\theta^k(x)}\right).
	\end{align*}
With \eqref{aux:grad_app}, we have
	\begin{align*}
		\frac{1}{\eta}\sqn{x^{k+1} - x^*}
		+
		\frac{2}{\tau}\left[r(x_f^{k+1}) - r(x^*)\right]
		&\leq
		\left(\frac{1}{\eta}-\alpha\right)\sqn{x^k - x^*}
		+\frac{2(1-\tau)}{\tau}\left[r(x_f^k) - r(x^*)\right]
		\\&\leq
		\left(1 - \rho\right)
		\left[\frac{1}{\eta}\sqn{x^k - x^*}
		+\frac{2}{\tau}\left[r(x_f^k) - r(x^*)\right]\right],
	\end{align*}
	where $\rho$ is defined by \eqref{ae:rho}.
\end{proof}
To prove Theorem \ref{ae:thm}, it is sufficient to run the recursion \eqref{ae:rec}:
	\begin{equation*}
		\sqn{x^K - x^*} \leq (1-\rho)^K	\left[\sqn{x^0 - x^*}
		+\frac{2\eta}{\tau}\left[r(x^0) - r(x^*)\right]\right] = C(1-\rho)^K,
	\end{equation*}
	where $C$ is defined as
	\begin{equation*}
		C = \sqn{x^0 - x^*}
		+\frac{2\eta}{\tau}\left[r(x^0) - r(x^*)\right].
	\end{equation*}
	Hence, choosing number of iterations $K$ given by \eqref{eq:K} yields
	\begin{equation*}
		\sqn{x^K - x^*} \leq \epsilon.
	\end{equation*}

\subsection{Convex case} \label{sec:slid_min_conv}

The next Algorithm \ref{ae_conv:alg} is an adaptation of Algorithm \ref{ae:alg} for the convex case. In particular, time-varying $\tau_{k+1}$ and $\eta_{k+1}$ are used instead of the momentum $\alpha$. 
\begin{algorithm}[H]
	\caption{Accelerated Extragradient (modification for convex case)}
	\label{ae_conv:alg}
	\begin{algorithmic}[1]
		\State {\bf Input:} $x^0=x_f^0 \in \R^d$
		\State {\bf Parameters:} $K \in \{1,2,\ldots\}, \{\tau_k \}_{k=1}^{K} \subset (0,1]$, $\{\eta_k\}_{k=1}^{K} \subset \R_+,\theta>0$
		\For{$k=0,1,2,\ldots, K-1$}
			\State $x_g^k = \tau_{k+1} x^k + (1-\tau_{k+1}) x_f^k$\label{ae_conv:line:1}
			\State $x_f^{k+1} \approx \argmin_{x \in \R^d}\left[ A_\theta^k(x) \eqdef p(x_g^k) + \<\nabla p(x_g^k),x - x_g^k> + \frac{1}{2\theta}\sqn{x - x_g^k} + q(x)\right]$\label{ae_conv:line:2}
			\State $x^{k+1} = x^k - \eta_{k+1} \nabla r(x_f^{k+1})$\label{ae_conv:line:3}
		\EndFor
		\State {\bf Output:} $x^{K+1}_f$
	\end{algorithmic}
\end{algorithm}

\begin{lemma}
Consider  \Cref{ae_conv:alg} for Problem \ref{eq:main} under Assumption \ref{ass:mu}($\mu=0$)-\ref{ass:L_p}, with the following tuning: 
\begin{equation}\label{ae_conv:choice}
    \tau_k = \frac{2}{k+1}, \quad \theta = \frac{1}{2L_p}, \quad  \eta_k= \frac{1}{2\tau_k L_p},
	\end{equation}
and let $x_f^{k+1}$   in \cref{ae_conv:line:2} satisfy    
	\begin{equation}\label{aux:conv_grad_app}
	\sqn{\nabla A_\theta^k(x_f^{k+1})} \leq  \frac{L_p^2}{3}\sqn{x_g^k- \argmin_{x \in \R^d} A_\theta^k(x)}.
	\end{equation}
Then, the following inequality holds:
	\begin{equation}\label{ae_conv:convergence}
	r(x_f^{k}) - r(x^*) \leq  \frac{4 L_p}{(k+1)^2} \sqn{x^0 - x^*}.
	\end{equation}
\end{lemma}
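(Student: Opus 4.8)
The plan is to mirror the Lyapunov analysis of the strongly convex case [the argument behind \eqref{ae:rec}], but with the constant momentum $\alpha=\mu=0$ and with the constant step/averaging parameters replaced by the growing schedule $\tau_k=\tfrac{2}{k+1}$, $\eta_k=\tfrac{1}{2\tau_k L_p}$. The starting point is that inequality \eqref{ae:eq:1} has already been established for an arbitrary value of the strong convexity constant and for $\theta=\tfrac{1}{2L_p}$; specializing it to $\mu=0$ and invoking the inexactness condition \eqref{aux:conv_grad_app} — which makes the last parenthesized line of \eqref{ae:eq:1} nonpositive — yields, for every $\bar x\in\R^d$,
\[
2\<\bar x - x_g^k,\nabla r(x_f^{k+1})> \;\leq\; 2\big[r(\bar x) - r(x_f^{k+1})\big] \;-\; \theta\,\sqn{\nabla r(x_f^{k+1})}.
\]
This is the only place where the inner-loop accuracy enters the argument.

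Next I would derive the one-step recursion. Expand $\sqn{x^{k+1}-x^*}$ using line \ref{ae_conv:line:3} ($x^{k+1}=x^k-\eta_{k+1}\nabla r(x_f^{k+1})$), and decompose $x^k-x^*$ via line \ref{ae_conv:line:1}, giving $x^k-x^* = (x_g^k-x^*) + \tfrac{1-\tau_{k+1}}{\tau_{k+1}}(x_g^k-x_f^k)$. Then apply the displayed inequality twice: once with $\bar x=x^*$ and weight $\eta_{k+1}$, and once with $\bar x=x_f^k$ and weight $\eta_{k+1}\tfrac{1-\tau_{k+1}}{\tau_{k+1}}\geq 0$. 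The two resulting $-\theta\sqn{\nabla r(x_f^{k+1})}$ contributions add up with total coefficient $\eta_{k+1}\theta\big(1+\tfrac{1-\tau_{k+1}}{\tau_{k+1}}\big)=\eta_{k+1}\theta/\tau_{k+1}=\eta_{k+1}^2$, which exactly cancels the $+\eta_{k+1}^2\sqn{\nabla r(x_f^{k+1})}$ produced by the update — this is precisely what forces the choice $\eta_{k+1}=\theta/\tau_{k+1}$. After collecting the $r(\cdot)-r(x^*)$ terms, what remains is
\[
\sqn{x^{k+1}-x^*} + \frac{2\theta}{\tau_{k+1}^2}\big[r(x_f^{k+1}) - r(x^*)\big] \;\leq\; \sqn{x^k-x^*} + \frac{2\theta(1-\tau_{k+1})}{\tau_{k+1}^2}\big[r(x_f^{k}) - r(x^*)\big].
\]

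Finally I would close the recursion with the potential $\Psi_k \eqdef \sqn{x^k-x^*} + \tfrac{2\theta}{\tau_k^2}\big[r(x_f^k)-r(x^*)\big]$. The elementary inequality $\tfrac{1-\tau_{k+1}}{\tau_{k+1}^2}\leq\tfrac{1}{\tau_k^2}$ — which for $\tau_k=\tfrac{2}{k+1}$ is just $k(k+2)\leq(k+1)^2$ — together with $r(x_f^k)\geq r(x^*)$ turns the last display into $\Psi_{k+1}\leq\Psi_k$ for $k\geq1$; at $k=0$ one has $\tau_1=1$, so the coefficient in front of $r(x_f^0)-r(x^*)$ vanishes and the display reads $\Psi_1\leq\sqn{x^0-x^*}$. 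Telescoping gives $\Psi_K\leq\sqn{x^0-x^*}$; dropping $\sqn{x^K-x^*}\geq0$ and substituting $\theta=\tfrac1{2L_p}$, $\tau_K=\tfrac2{K+1}$ yields exactly $r(x_f^K)-r(x^*)\leq\tfrac{4L_p}{(K+1)^2}\sqn{x^0-x^*}$, i.e.\ \eqref{ae_conv:convergence}. The only genuinely delicate points are bookkeeping ones: one must retain the $-\theta\sqn{\nabla r}$ term from \emph{both} applications of \eqref{ae:eq:1} (discarding either one destroys the exact cancellation of the gradient-norm term), and one must verify both the weight-monotonicity inequality and the $k=0$ boundary behaviour; everything else is the same algebra as in the strongly convex proof with $\alpha=\mu=0$.
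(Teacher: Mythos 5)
Your proposal is correct and follows essentially the same route as the paper's proof: the same specialization of \eqref{ae:eq:1} to $\mu=0$, the same two applications with $\bar x=x^*$ and $\bar x=x_f^k$ weighted by $1$ and $\tfrac{1-\tau_{k+1}}{\tau_{k+1}}$, the same cancellation of $\sqn{\nabla r(x_f^{k+1})}$ forced by $\eta_{k+1}=\theta/\tau_{k+1}$, and the same potential $\Psi_k$ with $k(k+2)\leq(k+1)^2$ and the $\tau_1=1$ boundary case. Your closing step (keeping the weight $\tfrac{2\theta}{\tau_K^2}=\tfrac{1}{\tau_K^2 L_p}$ before dropping $\sqn{x^K-x^*}$) is in fact the correct reading of the paper's final display, which contains a small typo ($\tfrac{1}{\tau_{k+1}L_p}$ in place of $\tfrac{1}{\tau_{k+1}^2L_p}$) but is intended to give exactly the $\tfrac{4L_p}{(k+1)^2}$ rate you obtain.
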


\begin{proof}
We start from \cref{ae_conv:line:3} of \Cref{ae_conv:alg} and get
	\begin{align*}
		\frac{1}{\eta_{k+1}}\sqn{x^{k+1} - x^*}
		&=
		\frac{1}{\eta_{k+1}}\sqn{x^k - x^*}
		+\frac{2}{\eta_{k+1}}\<x^{k+1} - x^k, x^k - x^*>
		+\frac{1}{\eta_{k+1}}\sqn{x^{k+1} - x^k}
		\\&=
		\frac{1}{\eta_{k+1}}\sqn{x^k - x^*}
		-2\<\nabla r(x_f^{k+1}), x^k - x^*>
		+\frac{1}{\eta_{k+1}}\sqn{x^{k+1} - x^k}.
	\end{align*}
\Cref{ae_conv:line:1} of \Cref{ae_conv:alg} gives
	\begin{align*}
		\frac{1}{\eta_{k+1}}\sqn{x^{k+1} - x^*}
		=&
		\frac{1}{\eta_{k+1}}\sqn{x^k - x^*}
		+\frac{1}{\eta_{k+1}}\sqn{x^{k+1} - x^k}
		\\&
		+2\<\nabla r(x_f^{k+1}), x^* - x_g^k>
		+\frac{2(1-\tau_{k+1})}{\tau_{k+1}}\<\nabla r(x_f^{k+1}), x_f^k - x_g^k>.
	\end{align*}
Using \eqref{ae:eq:1} with $\mu = 0$, $\bar{x} = x^*$ and $\bar{x} = x_f^k$, we get
	\begin{align*}
		\frac{1}{\eta_{k+1}}\sqn{x^{k+1} - x^*}
		\leq&
		\frac{1}{\eta_{k+1}}\sqn{x^k - x^*}
		+\frac{1}{\eta_{k+1}}\sqn{x^{k+1} - x^k}
		+2\left[r(x^*) - r(x_f^{k+1})\right] \\&
		+\frac{2(1-\tau_{k+1})}{\tau_{k+1}}\left[r(x_f^{k}) - r(x_f^{k+1})\right]
		-\frac{1}{2\tau_{k+1} L_p}\sqn{\nabla r(x_f^{k+1})}
		\\&+\frac{3\theta}{\tau_{k+1}}\left(\sqn{\nabla A_\theta^k(x_f^{k+1})} - \frac{L_p^2}{3}\sqn{x_g^k- \argmin_{x \in \R^d} A_\theta^k(x)}\right)
		\\=&\frac{1}{\eta_{k+1}}\sqn{x^k - x^*}
		+\frac{1}{\eta_{k+1}}\sqn{ \eta_k \nabla r(x_f^{k+1})}-\frac{1}{2\tau_{k+1} L_p}\sqn{\nabla r(x_f^{k+1})}
		\\&
		+\frac{2(1-\tau_{k+1})}{\tau_{k+1}}\left[r(x_f^k) - r(x^*)\right]
		-\frac{2}{\tau_{k+1}}\left[r(x_f^{k+1}) - r(x^*)\right]
		\\&
		+\frac{3\theta}{\tau_{k+1}}\left(\sqn{\nabla A_\theta^k(x_f^{k+1})} - \frac{L_p^2}{3}\sqn{x_g^k- \argmin_{x \in \R^d} A_\theta^k(x)}\right)
		\\=&
		\frac{1}{\eta_{k+1}}\sqn{x^k - x^*}
		+\left(\eta_{k+1} - \frac{1}{2\tau_{k+1} L_p}\right)\sqn{\nabla r(x_f^{k+1})}
		\\&
		+\frac{2(1-\tau_{k+1})}{\tau_{k+1}}\left[r(x_f^k) - r(x^*)\right]
		-\frac{2}{\tau_{k+1}}\left[r(x_f^{k+1}) - r(x^*)\right]
		\\&
		+\frac{3\theta}{\tau_{k+1}}\left(\sqn{\nabla A_\theta^k(x_f^{k+1})} - \frac{L_p^2}{3}\sqn{x_g^k- \argmin_{x \in \R^d} A_\theta^k(x)}\right).
	\end{align*}
The choice of $\eta_k$ defined by \eqref{ae_conv:choice} gives
	\begin{align*}
		\sqn{x^{k+1} - x^*}
		\leq&
		\sqn{x^k - x^*}
		+\frac{1-\tau_{k+1}}{\tau_{k+1}^2 L_p}\left[r(x_f^k) - r(x^*)\right]
		-\frac{1}{\tau_{k+1}^2 L_p}\left[r(x_f^{k+1}) - r(x^*)\right]
		\\&
		+\frac{3\theta}{2\tau_{k+1}^2 L_p}\left(\sqn{\nabla A_\theta^k(x_f^{k+1})} - \frac{L_p^2}{3}\sqn{x_g^k- \argmin_{x \in \R^d} A_\theta^k(x)}\right).
	\end{align*}
With \eqref{aux:conv_grad_app}, we have
	\begin{align}\label{ae_conv:lf}
		\sqn{x^{k+1} - x^*}
		+
		\frac{1}{\tau_{k+1}^2 L_p}\left[r(x_f^{k+1}) - r(x^*)\right]
		&\leq
		\sqn{x^k - x^*}
		+\frac{1-\tau_{k+1}}{\tau_{k+1}^2 L_p}\left[r(x_f^k) - r(x^*)\right].
	\end{align}
Let us define $\Psi_k$:
    \begin{align*}
        \Psi_k := \sqn{x^{k} - x^*}
		+
		\frac{1}{\tau_{k}^2 L_p}\left[r(x_f^{k}) - r(x^*)\right].
    \end{align*}
Using \eqref{ae_conv:lf}, $\Psi_k$ defined above and $\tau_k$ defined by \eqref{ae_conv:choice} we get:
    \begin{align*}
        \frac{1}{\tau_{k+1}L_p}\left[r(x_f^{k+1}) - r(x^*)\right] &\leq \Psi_{k+1} \\&\leq \sqn{x^k - x^*}
		+\frac{1-\tau_{k+1}}{\tau_{k+1}^2 L_p}\left[r(x_f^k) - r(x^*)\right]
		\\&= \sqn{x^k - x^*}
		+\frac{(k+2)^2-2(k+2)}{4 L_p}\left[r(x_f^k) - r(x^*)\right]
		\\&= \sqn{x^k - x^*}
		+\frac{(k+2)k}{4 L_p}\left[r(x_f^k) - r(x^*)\right]
		\\&\leq \sqn{x^k - x^*}
		+\frac{(k+1)^2}{4 L_p}\left[r(x_f^k) - r(x^*)\right]
		\\&= \sqn{x^k - x^*}
		+\frac{1}{\tau_k^2 L_p}\left[r(x_f^k) - r(x^*)\right] = \Psi_k.
    \end{align*}
Next, we apply the previous inequality
    \begin{align*}
       \frac{1}{\tau_{k+1}L_p}\left[r(x_f^{k+1}) - r(x^*)\right]\leq \Psi_{k+1} \leq  \Psi_{k}	\leq \ldots \leq \Psi_{1} \leq \sqn{x^0 - x^*}
		+\frac{1-\tau_{1}}{\tau_{1}^2 L_p}\left[r(x_f^1) - r(x^*)\right].  
    \end{align*}
With $\tau_1 = 1$, we have 
    \begin{align*}
       \frac{1}{\tau_{k+1}L_p}\left[r(x_f^{k+1}) - r(x^*)\right]\leq \sqn{x^0 - x^*}.  
    \end{align*}
Finally, again with the choice of $\tau_k$ defined by \eqref{ae_conv:choice}, we get \eqref{ae_conv:convergence}.
\end{proof}

Using \eqref{ae_conv:convergence}, we get 
\begin{align*}
    r(x_f^{T}) - r(x^*) \leq \varepsilon
\end{align*}
after 
\begin{align*}
    T = \sqrt{\frac{4L_p }{\varepsilon}}\|x^0 - x^*\|
\end{align*}
iterations of \Cref{ae_conv:alg}. This is what Theorem \ref{th4} is about.

\section{Proofs for Section \ref{sec:slid_VI}} \label{sec:slid_VI_proof}
In this section we present a proof of the convergence of Algorithm \ref{ae:algVI} in the strongly monotone case -- Section \ref{sec:slid_VI_strmon}. We also present a modification of Algorithm \ref{ae:alg} for the monotone case, as well as a proof of its convergence -- Section \ref{sec:slid_VI_mon}.

\subsection{Strongly monotone case} \label{sec:slid_VI_strmon}
Here we prove Theorem \ref{ae:thmVI}. First, we need the following lemmas:
\begin{lemma}
	Consider   \Cref{ae:algVI}. Let $\theta$ be defined as in Theorem \ref{ae:thmVI}: $\theta = \frac{1}{2L_p}$. Then, under Assumptions \ref{ass:muVI}-\ref{ass:L_pVI}, the following inequality holds for all $\bar{x} \in \R^d$
	\begin{equation}
	\begin{split}\label{ae:eq:1VI}
		2\<x^* - x^k, R(u^k)>
		\leq&
		- 2\mu \sqn{u^k - x^*}
		- \theta\sqn{R(u^k)}
		\\&
		+3\theta\left(\sqn{B_\theta^k(u^k)} - \frac{L_p^2}{3}\sqn{x^k- \tilde u^k}\right).
	\end{split}
	\end{equation}
\end{lemma}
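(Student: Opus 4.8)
The plan is to follow the proof of the minimization counterpart \eqref{ae:eq:1}, replacing each use of convexity/smoothness of $p,q,r$ by the matching monotonicity/Lipschitz property of $P,Q,R$. First I would split $x^*-x^k = (x^*-u^k)+(u^k-x^k)$, so that
\[
2\<x^*-x^k,R(u^k)> = 2\<x^*-u^k,R(u^k)> + 2\<u^k-x^k,R(u^k)>,
\]
and treat the two terms separately. For the first term, $\mu$-strong monotonicity of $R$ (\Cref{ass:muVI}) together with $R(x^*)=0$ gives $\<R(u^k),u^k-x^*>\geq\mu\sqn{u^k-x^*}$, hence $2\<x^*-u^k,R(u^k)>\leq-2\mu\sqn{u^k-x^*}$, which is exactly the first term on the right-hand side of the claim.

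For the second term I would write $2\<u^k-x^k,R(u^k)> = 2\theta\<\theta^{-1}(u^k-x^k),R(u^k)>$ and use the elementary identity $2\theta\<a,b> = \theta\sqn{a+b}-\theta\sqn{a}-\theta\sqn{b}$ with $a=\theta^{-1}(u^k-x^k)$ and $b=R(u^k)$, which gives
\[
2\<u^k-x^k,R(u^k)> = \theta\sqn{\theta^{-1}(u^k-x^k)+R(u^k)} - \tfrac1\theta\sqn{u^k-x^k} - \theta\sqn{R(u^k)}.
\]
The crucial observation, analogous to rewriting $\theta^{-1}(x_f^{k+1}-x_g^k)+\nabla r(x_f^{k+1})$ as $\nabla A_\theta^k(x_f^{k+1})+\nabla p(x_f^{k+1})-\nabla p(x_g^k)$ in the minimization proof, is that by the definition of $B_\theta^k$ one has $\theta^{-1}(u^k-x^k)+R(u^k) = B_\theta^k(u^k) + \big(P(u^k)-P(x^k)\big)$. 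Then Young's inequality together with $L_p$-Lipschitzness of $P$ (\Cref{ass:L_pVI}) yields $\sqn{\theta^{-1}(u^k-x^k)+R(u^k)} \leq 2\sqn{B_\theta^k(u^k)} + 2L_p^2\sqn{u^k-x^k}$, and substituting $\theta=\tfrac1{2L_p}$ turns the coefficient of $\sqn{u^k-x^k}$ into $-\tfrac1{2\theta}$, so that
\[
2\<u^k-x^k,R(u^k)> \leq 2\theta\sqn{B_\theta^k(u^k)} - \tfrac1{2\theta}\sqn{u^k-x^k} - \theta\sqn{R(u^k)}.
\]

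It remains to trade $\sqn{u^k-x^k}$ for $\sqn{x^k-\tilde u^k}$. Using $\sqn{x^k-\tilde u^k}\leq 2\sqn{x^k-u^k}+2\sqn{u^k-\tilde u^k}$ we get $-\tfrac1{2\theta}\sqn{u^k-x^k}\leq -\tfrac1{4\theta}\sqn{x^k-\tilde u^k}+\tfrac1{2\theta}\sqn{u^k-\tilde u^k}$, and I would bound $\sqn{u^k-\tilde u^k}$ by noting that $B_\theta^k$ is $\tfrac1\theta$-strongly monotone (monotonicity of $Q$ plus the $\tfrac1\theta(x-x^k)$ term) with $B_\theta^k(\tilde u^k)=0$; after Cauchy--Schwarz this gives $\norm{u^k-\tilde u^k}\leq\theta\norm{B_\theta^k(u^k)}$, i.e. $\sqn{u^k-\tilde u^k}\leq\theta^2\sqn{B_\theta^k(u^k)}$. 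Collecting terms, the coefficient of $\sqn{B_\theta^k(u^k)}$ becomes $2\theta+\tfrac\theta2=\tfrac{5\theta}2\leq3\theta$, while $\tfrac1{4\theta}=\theta L_p^2=3\theta\cdot\tfrac{L_p^2}3$ for $\theta=\tfrac1{2L_p}$; combined with the bound on the first term, this reassembles precisely into $-2\mu\sqn{u^k-x^*}-\theta\sqn{R(u^k)}+3\theta\big(\sqn{B_\theta^k(u^k)}-\tfrac{L_p^2}3\sqn{x^k-\tilde u^k}\big)$, which is the claim. The only delicate point is the middle-step bookkeeping — establishing the identity $\theta^{-1}(u^k-x^k)+R(u^k)=B_\theta^k(u^k)+P(u^k)-P(x^k)$ and observing that $B_\theta^k$ inherits $\tfrac1\theta$-strong monotonicity (the analog of ``$A_\theta^k$ is $\tfrac1\theta$-strongly convex'' in the minimization proof); everything else is the same completion-of-squares algebra with the chosen $\theta$.
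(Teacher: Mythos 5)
Your proposal is correct and follows essentially the same route as the paper's proof: the same splitting $x^*-x^k=(x^*-u^k)+(u^k-x^k)$, strong monotonicity with $R(x^*)=0$, the completion-of-squares step with the identity $\theta^{-1}(u^k-x^k)+R(u^k)=B_\theta^k(u^k)+P(u^k)-P(x^k)$, Young plus $L_p$-Lipschitzness, and the $\tfrac1\theta$-strong monotonicity of $B_\theta^k$ with $B_\theta^k(\tilde u^k)=0$ to get $\sqn{u^k-\tilde u^k}\leq\theta^2\sqn{B_\theta^k(u^k)}$, ending with the same $\tfrac{5\theta}{2}\leq 3\theta$ and $\tfrac{1}{4\theta}=3\theta\cdot\tfrac{L_p^2}{3}$ bookkeeping. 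No gaps.
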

\begin{proof}
Using property of the solution: $R(x^*) = 0$ and $\mu$-strong monotonicity of $R(x)$, we get
	\begin{align*}
		2\<x^* - x^k, R(u^k)>
		=&
		2\<x^* - u^k, R(u^k)>
		+2\<u^k - x^k, R(u^k)>
		\\\leq&
		2\<x^* - u^k, R(u^k) - R(x^*)>
		+2\<u^k - x^k, R(u^k)>
		\\\leq&
		- 2\mu \sqn{u^k - x^*}
		+2\<u^k - x^k, R(u^k)>
		\\=&
		- 2\mu \sqn{u^k - x^*}
		+2\theta\<\theta^{-1}(u^k - x^k), R(u^k)>
		\\=&
	    - 2\mu \sqn{u^k - x^*}
		\\&
		-\frac{1}{\theta}\sqn{u^k - x^k} - \theta\sqn{R(u^k)}
		+\theta\sqn{\theta^{-1}(u^k - x^k) + R(u^k)}.
	\end{align*}
The definition of $B_\theta^k(x)$ and $L_p$-Lipschitzness of $P$ (Assumption \ref{ass:L_pVI}) give
	\begin{align*}
		2\<x^* - x^k, R(u^k)>
		\leq&
	    - 2\mu \sqn{u^k - x^*}
		-\frac{1}{\theta}\sqn{u^k - x^k} - \theta\sqn{R(u^k)}
		\\&
		+\theta\sqn{B_\theta^k(u^k) + P(u^k) - P (x^k)}
		\\\leq&
		- 2\mu \sqn{u^k - x^*}
		-\frac{1}{\theta}\sqn{u^k - x^k} - \theta\sqn{R(u^k)}
		\\&
		+2\theta\sqn{B_\theta^k(u^k)}
		+2\theta L_p^2\sqn{u^k - x^k}
		\\=&
		- 2\mu \sqn{u^k - x^*}
		-\frac{1}{\theta}\left(1 - 2\theta^2L_p^2\right) \sqn{u^k - x^k}
		\\&
		- \theta\sqn{R(u^k)}
		+2\theta\sqn{B_\theta^k(u^k)}.
	\end{align*}
With $\theta = \frac{1}{2L_p}$, we have
	\begin{align*}
		2\<x^* - x^k, R(u^k)>
		\leq&
		- 2\mu \sqn{u^k - x^*}
		-\frac{1}{2\theta}\sqn{u^k - x^k}
		- \theta\sqn{R(u^k)}
		+2\theta\sqn{B_\theta^k(u^k)}
		\\=&
		- 2\mu \sqn{u^k - \bar{x}}
		-\frac{1}{4\theta}\sqn{x^k- \tilde u^k}
		\\&
		+\frac{1}{2\theta}\sqn{u^k - \tilde u^k}
		- \theta\sqn{R(u^k)}
		+2\theta\sqn{B_\theta^k(u^k)}.
	\end{align*}
	One can observe that $B_\theta^k(x)$ is $\frac{1}{\theta}$-strongly monotone. It gives that
	$$
	\frac{1}{\theta} \|x - y \|^2 \leq \langle B_\theta^k(x) - B_\theta^k(y); x -y \rangle \leq \| B_\theta^k(x) - B_\theta^k(y) \| \cdot \| x -y  \|,
	$$
	and with $B_\theta^k(\tilde u^k) = 0$ (since $\tilde u^k$ is the solution of line \ref{ae:line:2VI}), we get
	$$
	\frac{1}{\theta^2} \|u^k - \tilde u^k \|^2 \leq \| B_\theta^k(u^k) - B_\theta^k(\tilde u^k) \|^2 = \| B_\theta^k(u^k) \|^2.
	$$
	Hence,
	\begin{align*}
		2\<x^* - x^k, R(u^k)>
		\leq&
		- 2\mu \sqn{u^k - x^*}
		-\frac{1}{4\theta}\sqn{x^k- \tilde u^k}
		\\&
		+\frac{\theta}{2}\sqn{B_\theta^k(u^k)}
		- \theta\sqn{R(u^k)}
		+2\theta\sqn{B_\theta^k(u^k)}
		\\\leq&
		- 2\mu \sqn{u^k - \bar{x}}
		-\frac{1}{4\theta}\sqn{x^k- \tilde u^k}
		\\&
		+3\theta\sqn{B_\theta^k(u^k)}
		- \theta\sqn{R(u^k)}
		\\=&
		- 2\mu \sqn{u^k - \bar{x}}
		- \theta\sqn{R(u^k)}
		\\&
		+3\theta\left(\sqn{B_\theta^k(u^k)} - \frac{1}{12\theta^2}\sqn{x^k-\tilde u^k}\right)
		\\=&
	    - 2\mu \sqn{u^k - \bar{x}}
		- \theta\sqn{R(u^k)}
		\\&
		+3\theta\left(\sqn{B_\theta^k(u^k)} - \frac{L_p^2}{3}\sqn{x^k- \tilde u^k}\right).
	\end{align*}
This completes the proof of Lemma.
\end{proof}

\begin{lemma}
Consider  \Cref{ae:algVI} for Problem \ref{eq:mainVI} under Assumption \ref{ass:muVI}-\ref{ass:L_pVI}, with the following tuning:
	\begin{equation}\label{ae:choiceVI}
    \theta = \frac{1}{2L_p}, \quad  \eta = \min \left\{\frac{1}{4\mu},\frac{1}{4 L_p}\right\}, \quad \alpha = 2\mu,
	\end{equation}
and let $u^k$ in \cref{ae:line:2VI} satisfies 
	\begin{equation}\label{aux:gradVI_app}
	\sqn{B_\theta^k(u^k)} \leq  \frac{L_p^2}{3}\sqn{x^k- \tilde u^k}.
	\end{equation}
Then, the following inequality holds:
	\begin{equation}\label{ae:recVI}
		\sqn{x^{k+1} - x^*}
		\leq
		\left(1 - 2 \mu \eta \right)^K \sqn{x^0 - x^*}.
	\end{equation}
\end{lemma}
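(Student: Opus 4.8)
The plan is to establish the one-step contraction $\sqn{x^{k+1}-x^*} \le (1-2\mu\eta)\sqn{x^k-x^*}$ and then unroll it to obtain \eqref{ae:recVI}; this mirrors the argument for \Cref{ae:alg} in the minimization setting, only simpler because there is no momentum sequence $x_f$ to track. The starting point is to expand the squared distance along \cref{ae:line:3VI}. Writing $x^{k+1}-x^k = \eta\alpha(u^k-x^k)-\eta R(u^k)$,
\begin{align*}
\sqn{x^{k+1}-x^*} = \sqn{x^k-x^*} + 2\eta\alpha\<u^k - x^k, x^k - x^*> - 2\eta\<R(u^k), x^k - x^*> + \sqn{x^{k+1}-x^k},
\end{align*}
and then I would apply the identity $2\<u^k-x^k,x^k-x^*> = \sqn{u^k-x^*}-\sqn{x^k-x^*}-\sqn{u^k-x^k}$ to the first inner product.

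Next I would invoke the preceding lemma, inequality \eqref{ae:eq:1VI} with $\bar x = x^*$, to bound $-2\eta\<R(u^k),x^k-x^*> = \eta\cdot 2\<x^*-x^k,R(u^k)>$ from above by $\eta\big(-2\mu\sqn{u^k-x^*} - \theta\sqn{R(u^k)} + 3\theta(\sqn{B_\theta^k(u^k)} - \tfrac{L_p^2}{3}\sqn{x^k-\tilde u^k})\big)$, and bound $\sqn{x^{k+1}-x^k} \le 2\eta^2\alpha^2\sqn{u^k-x^k} + 2\eta^2\sqn{R(u^k)}$ by Young's inequality. Collecting terms, the coefficient of $\sqn{u^k-x^*}$ becomes $\eta\alpha - 2\mu\eta$, which vanishes precisely because $\alpha = 2\mu$; what remains is
\begin{align*}
\sqn{x^{k+1}-x^*} \le (1-2\mu\eta)\sqn{x^k-x^*} + \eta\alpha(2\eta\alpha-1)\sqn{u^k-x^k} + \eta(2\eta-\theta)\sqn{R(u^k)} + 3\eta\theta\left(\sqn{B_\theta^k(u^k)} - \tfrac{L_p^2}{3}\sqn{x^k-\tilde u^k}\right).
\end{align*}

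It then suffices to check that the three residual terms are nonpositive under the tuning \eqref{ae:choiceVI}: $2\eta\alpha \le 1$ holds since $\eta \le \tfrac{1}{4\mu} = \tfrac{1}{2\alpha}$; $2\eta \le \theta$ holds since $\eta \le \tfrac{1}{4L_p} = \tfrac{\theta}{2}$; and the bracketed term is $\le 0$ by the inexactness condition \eqref{aux:gradVI_app}. This gives $\sqn{x^{k+1}-x^*}\le(1-2\mu\eta)\sqn{x^k-x^*}$, and iterating over $k=0,\dots,K-1$ yields \eqref{ae:recVI}. I do not expect a real obstacle here; the only delicate point is organizing the algebra so that the $\sqn{u^k-x^*}$ terms cancel exactly (this is what pins down $\alpha=2\mu$) and so that the bookkeeping leaves precisely the three residual terms above, each of which is annihilated either by the step-size constraints built into $\eta$ or by the stopping criterion on the inner solver.
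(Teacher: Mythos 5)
Your proposal is correct and follows essentially the same route as the paper's proof: expand $\sqn{x^{k+1}-x^*}$ along \cref{ae:line:3VI}, apply \eqref{ae:eq:1VI}, bound $\sqn{x^{k+1}-x^k}=\sqn{\eta\alpha(u^k-x^k)-\eta R(u^k)}$ by Young's inequality, cancel the $\sqn{u^k-x^*}$ term via $\alpha=2\mu$, and kill the remaining residuals with the step-size constraints and the stopping criterion \eqref{aux:gradVI_app} before unrolling the contraction.
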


\begin{proof}
Using \cref{ae:line:3VI} of \Cref{ae:algVI}, we get
	\begin{align*}
		\sqn{x^{k+1} - x^*}
		=&
		\sqn{x^k - x^*}
		+2\<x^{k+1} - x^k, x^k - x^*>
		+\sqn{x^{k+1} - x^k}
		\\=&
	    \sqn{x^k - x^*}
		+2\eta \alpha\<u^k -x^k, x^k - x^*>
		\\&
		-2\eta\<R(u^k), x^k - x^*>
		+\sqn{x^{k+1} - x^k}
		\\=&
		\sqn{x^k - x^*}
		+\eta\alpha\sqn{u^k - x^*}
		-\eta\alpha\sqn{x^k - x^*}
		-\eta\alpha\sqn{u^k - x^k}
		\\&
		-2\eta \<R(u^k), x^k - x^*>
		+\sqn{x^{k+1} - x^k}.
	\end{align*}
With \eqref{ae:eq:1VI}, we get
	\begin{align*}
		\sqn{x^{k+1} - x^*}
		\leq&
		\left(1-\eta\alpha\right)\sqn{x^k - x^*}
		+\eta\alpha\sqn{u^k - x^*}
		+\sqn{x^{k+1} - x^k}
		-\eta\alpha\sqn{u^k - x^k}
		\\&
		- 2\eta\mu \sqn{u^k - x^*}
		- \eta\theta\sqn{R(u^k)}
		\\&
		+3\eta\theta\left(\sqn{B_\theta^k(u^k)} - \frac{L_p^2}{3}\sqn{x^k- \tilde u^k}\right)
		\\=&
		\left(1-\eta\alpha\right)\sqn{x^k - x^*}
		+\sqn{\eta\alpha(u^k - x^k) - \eta R(u^k)}
		-\eta\alpha\sqn{u^k - x^k}
		\\&
		- \eta (2\mu - \alpha ) \sqn{u^k - x^*}
		- \eta\theta\sqn{R(u^k)}
		\\&
		+3\eta\theta\left(\sqn{B_\theta^k(u^k)} - \frac{L_p^2}{3}\sqn{x^k- \tilde u^k}\right)
		\\\leq&
		\left(1-\eta\alpha\right)\sqn{x^k - x^*}
		-\eta\alpha (1 - 2\eta\alpha)\sqn{u^k - x^k}
		\\&
		- \eta (2\mu - \alpha ) \sqn{u^k - x^*}
		- \eta (\theta - 2 \eta)\sqn{R(u^k)}
		\\&
		+3\eta\theta\left(\sqn{B_\theta^k(u^k)} - \frac{L_p^2}{3}\sqn{x^k- \tilde u^k}\right).
	\end{align*}
The choice of $\alpha,\eta, \theta$ defined by \eqref{ae:choiceVI} gives
	\begin{align*}
		\sqn{x^{k+1} - x^*}
		&\leq
		\left(1-2\eta\mu\right)\sqn{x^k - x^*}
		+3\eta\theta\left(\sqn{B_\theta^k(u^k)} - \frac{L_p^2}{3}\sqn{x^k- \tilde u^k}\right).
	\end{align*}
	Using \eqref{aux:gradVI} we get
	\begin{align*}
		\sqn{x^{k+1} - x^*}
		&\leq
		\left(1-2\eta\mu\right)\sqn{x^k - x^*}.
	\end{align*}
\end{proof}
To prove Theorem \ref{ae:thmVI}, it is sufficient to run the recursion \eqref{ae:recVI}:
\begin{equation*}
		\sqn{x^K - x^*} \leq (1-2\eta \mu)^K	\left[\sqn{x^0 - x^*}
		+\frac{2\eta}{\tau}\left[r(x^0) - r(x^*)\right]\right] = C(1-\rho)^K,
	\end{equation*}
Hence, choosing number of iterations $K$ given by \eqref{eq:KVI} yields
	\begin{equation*}
		\sqn{x^K - x^*} \leq \epsilon.
	\end{equation*}
	
\subsection{Monotone case}\label{sec:slid_VI_mon}

The next Algorithm \ref{ae:algVI} is an adaptation of Algorithm \ref{ae:algVIm} for the monotone case. In particular, we remove the momentum $\alpha$. 

\begin{algorithm}[H]
	\caption{Extragradient (modification for monotone case)}
	\label{ae:algVIm}
	\begin{algorithmic}[1]
		\State {\bf Input:} $x^0 \in \R^d$
		\State {\bf Parameters:} $\eta,\theta > 0, K \in \{1,2,\ldots\}$
		\For{$k=0,1,2,\ldots, K-1$}
			\State Find $u^k \approx \tilde u^k$ where $\tilde u^k$ is a solution for
			\vspace{-0.2cm}
			$$
			\text{Find } \tilde u^k \in \R^d ~:~ B_\theta^k(\tilde u^k)= 0 \text{ with } B_\theta^k(x) \eqdef P(x^k) + Q(x) + \frac{1}{\theta} (x - x^k)
			$$
			\label{ae:line:2VIm}
			\vspace{-0.5cm}
			\State $x^{k+1} = x^k - \eta R(u^k)$\label{ae:line:3VIm}
		\EndFor
		\State {\bf Output:} $x^K$
	\end{algorithmic}
\end{algorithm}

\begin{lemma}
Consider  \Cref{ae:algVIm} for Problem \ref{eq:mainVI} under Assumption \ref{ass:muVI}($\mu=0$)-\ref{ass:L_pVI}, with the following tuning:
\begin{equation}\label{ae_conv:choiceVIm}
    \theta = \frac{1}{2L_p}, \quad  \eta= \frac{1}{4 L_p},
	\end{equation}
and let $u^k$ in \cref{ae:line:2VIm} satisfies 
	\begin{equation}\label{aux:gradVIm_app}
	\sqn{B_\theta^k(u^k)} \leq  \frac{L_p^2}{3}\sqn{x^k- \tilde u^k}.
	\end{equation}
Then, the following inequality holds:
	\begin{equation}\label{ae:recVIm}
		\sup_{x \in \mathcal{C}}\<R(x), \left(\frac{1}{K}\sum\limits_{k=0}^{K-1} u^k \right) - x> 
		\leq \frac{2L_p \sqn{x^0 - x}}{K}.
	\end{equation}
\end{lemma}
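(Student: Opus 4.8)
The plan is to mirror the two‑lemma structure of the strongly monotone analysis in Section~\ref{sec:slid_VI_strmon}, with the two modifications forced by the statement: $\mu=0$ and no momentum ($\alpha=0$). The only genuinely new ingredient is that the analogue of \eqref{ae:eq:1VI} must hold for an \emph{arbitrary} test point $\bar x\in\mathcal{C}$, since in the monotone case there is no $x^*$ with $R(x^*)=0$ to exploit.

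First I would establish the estimate: for every $\bar x\in\R^d$,
\begin{equation*}
2\<\bar x - x^k, R(u^k)> \le 2\<\bar x - u^k, R(\bar x)> - \theta\sqn{R(u^k)} + 3\theta\left(\sqn{B_\theta^k(u^k)} - \tfrac{L_p^2}{3}\sqn{x^k - \tilde u^k}\right).
\end{equation*}
The derivation copies the lemma preceding Theorem~\ref{ae:thmVI} verbatim except at one spot: the step that used $R(x^*)=0$ together with strong monotonicity, namely $\<x^*-u^k,R(u^k)>\le -\mu\sqn{u^k-x^*}$, is replaced by plain monotonicity of $R$, $\<\bar x-u^k,R(u^k)-R(\bar x)>\le 0$, i.e. $\<\bar x-u^k,R(u^k)>\le\<\bar x-u^k,R(\bar x)>$. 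Everything downstream is unchanged: split $\<u^k-x^k,R(u^k)>=\theta\<\theta^{-1}(u^k-x^k),R(u^k)>$, expand by polarization, identify $\theta^{-1}(u^k-x^k)+R(u^k)=B_\theta^k(u^k)+P(u^k)-P(x^k)$, apply Young's inequality and $L_p$‑Lipschitzness of $P$, substitute $\theta=\tfrac1{2L_p}$ to obtain a clean $-\tfrac1{2\theta}\sqn{u^k-x^k}$, bound it by $-\tfrac1{4\theta}\sqn{x^k-\tilde u^k}+\tfrac1{2\theta}\sqn{u^k-\tilde u^k}$ via the triangle inequality, and use that $B_\theta^k$ is $\tfrac1\theta$‑strongly monotone with $B_\theta^k(\tilde u^k)=0$, which yields $\sqn{u^k-\tilde u^k}\le\theta^2\sqn{B_\theta^k(u^k)}$. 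Collecting the $\sqn{B_\theta^k(u^k)}$ terms ($2\theta+\tfrac\theta2\le 3\theta$) and using $\tfrac1{4\theta}=3\theta\cdot\tfrac{L_p^2}{3}$ gives the claim.

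Next I would invoke the inexactness condition \eqref{aux:gradVIm_app}, which makes the parenthesized term nonpositive, so that $2\<u^k-\bar x,R(\bar x)>\le 2\<x^k-\bar x,R(u^k)>-\theta\sqn{R(u^k)}$. Plugging in the update $x^{k+1}=x^k-\eta R(u^k)$ via $\sqn{x^{k+1}-\bar x}=\sqn{x^k-\bar x}-2\eta\<R(u^k),x^k-\bar x>+\eta^2\sqn{R(u^k)}$ gives
\begin{equation*}
2\<u^k-\bar x,R(\bar x)> \le \tfrac1\eta\left(\sqn{x^k-\bar x}-\sqn{x^{k+1}-\bar x}\right)+(\eta-\theta)\sqn{R(u^k)}.
\end{equation*}
With $\eta=\tfrac1{4L_p}$ and $\theta=\tfrac1{2L_p}$ we have $\eta-\theta=-\tfrac1{4L_p}\le0$, so the last term is discarded. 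Summing over $k=0,\dots,K-1$ telescopes to $\tfrac1\eta\sqn{x^0-\bar x}$ (dropping $-\tfrac1\eta\sqn{x^K-\bar x}\le0$); dividing by $2K$, using linearity to write $\tfrac1K\sum_k\<u^k-\bar x,R(\bar x)>=\<\tfrac1K\sum_k u^k-\bar x,R(\bar x)>$, and substituting $\tfrac1{2\eta}=2L_p$ yields $\<R(\bar x),\tfrac1K\sum_{k=0}^{K-1}u^k-\bar x>\le\tfrac{2L_p\sqn{x^0-\bar x}}{K}$ for every $\bar x\in\mathcal{C}$; taking the supremum over $\mathcal{C}$ gives \eqref{ae:recVIm}.

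The main obstacle is the first step: one must carefully re‑run the chain of inequalities from the strongly monotone lemma with $\bar x$ in place of $x^*$ and monotonicity in place of "$R(x^*)=0$ plus strong monotonicity", and verify that all constants—especially the coefficient in front of $\sqn{B_\theta^k(u^k)}$ and the $\tfrac{L_p^2}{3}$ threshold—still line up exactly so that \eqref{aux:gradVIm_app} annihilates the error term. The only other delicate point is the sign check $\eta-\theta\le0$, which is precisely what lets us discard $\sqn{R(u^k)}$ before telescoping; this is why $\eta$ is chosen strictly smaller than $\theta$ in \eqref{ae_conv:choiceVIm}.
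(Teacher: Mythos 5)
Your proposal is correct and follows essentially the same path as the paper's proof: the same per-iteration inequality with identical constants ($\theta=\tfrac1{2L_p}$, the $3\theta$ coefficient, the $\tfrac{L_p^2}{3}$ threshold), the same use of the $\tfrac1\theta$-strong monotonicity of $B_\theta^k$ with $B_\theta^k(\tilde u^k)=0$, the sign check $\eta\le\theta$, and telescoping before taking the supremum over $\mathcal{C}$. The only cosmetic difference is that you invoke monotonicity of $R$ at the test point $\bar x$ inside the per-iteration bound, whereas the paper keeps $\<R(u^k),u^k-x>$ throughout and applies monotonicity only after summing — the two orderings are equivalent.
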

\begin{remark}
Here we do not take the maximum over the entire set $\R^d$ (as in the classical version for VIs \cite{juditsky2011solving}), but over $\mathcal{C}$ -- a compact subset of $\R^d$. Thus, we can also consider unbounded sets $\R^d$. This is permissible, since such a version of the criterion is valid if the solution $z^{*}$ lies in $\mathcal{C}$; for details see the work of \cite{nesterov2007dual}.
\end{remark}

\begin{proof}
We start from \cref{ae:line:3VIm} of \Cref{ae:algVIm} and get
	\begin{align*}
		\sqn{x^{k+1} - x}
		&=
		\sqn{x^k - x}
		+2\<x^{k+1} - x^k, x^k - x>
		+\sqn{x^{k+1} - x^k}
		\\&=
	    \sqn{x^k - x}
		-2\eta\<R(u^k), x^k - x>
		+\sqn{x^{k+1} - x^k}.
	\end{align*}
Using \eqref{ae:eq:1VI} with $\mu = 0$, we get
	\begin{align*}
		\sqn{x^{k+1} - x}
		\leq&
		\sqn{x^k - x}
		+\sqn{x^{k+1} - x^k}
		\\&
		-2\eta\<R(u^k), u^k - x>
		- \eta\theta\sqn{R(u^k)}
		\\&
		+3\eta\theta\left(\sqn{B_\theta^k(u^k)} - \frac{L_p^2}{3}\sqn{x^k- \tilde u^k}\right)
		\\=&
		\sqn{x^k - x} -2\eta\<R(u^k), u^k - x>
		+\eta^2\sqn{ R(u^k)} - \eta\theta\sqn{R(u^k)}
		\\&
		+3\eta\theta\left(\sqn{B_\theta^k(u^k)} - \frac{L_p^2}{3}\sqn{x^k- \tilde u^k}\right)
		\\\leq&
		\sqn{x^k - x} -2\eta\<R(u^k), u^k - x>
		- \eta (\theta - \eta)\sqn{R(u^k)}
		\\&
		+3\eta\theta\left(\sqn{B_\theta^k(u^k)} - \frac{L_p^2}{3}\sqn{x^k- \tilde u^k}\right).
	\end{align*}
The choice of $\theta, \eta$ defined by \eqref{ae_conv:choice} give
	\begin{align*}
		\sqn{x^{k+1} - x}
		\leq&
		\sqn{x^k - x} -2\eta\<R(u^k), u^k - x> 
		\\&
		+3\eta\theta\left(\sqn{B_\theta^k(u^k)} - \frac{L_p^2}{3}\sqn{x^k- \tilde u^k}\right).
	\end{align*}
With \eqref{aux:gradVIm_app}, we have
	\begin{align*}
			\sqn{x^{k+1} - x}
		&\leq
		\sqn{x^k - x} -2\eta\<R(u^k), u^k - x> .
	\end{align*}
Summing from $k = 0$ to $K-1$, we obtain
    \begin{align*}
		\frac{1}{K}\sum\limits_{k=0}^{K-1}\<R(u^k), u^k - x> 
		&\leq
		\frac{\sqn{x^0 - x} - \sqn{x^{K} - x}}{2\eta K}
		\\&\leq
		\frac{\sqn{x^0 - x}}{2\eta K}.
	\end{align*}
Monotonicity of $R$ gives
    \begin{align*}
		\<R(x), \left(\frac{1}{K}\sum\limits_{k=0}^{K-1} u^k \right) - x> &= \frac{1}{K}\sum\limits_{k=0}^{K-1}\<R(x), u^k - x>  \\
		&\leq \frac{1}{K}\sum\limits_{k=0}^{K-1}\<R(u^k), u^k - x> \\ 
		&\leq 
		\frac{\sqn{x^0 - x}}{2\eta K} \leq \frac{2L_p \sqn{x^0 - x}}{K}.
	\end{align*}
By taking the supremum over the set $\mathcal{C}$, we get
    \begin{align*}
		\sup_{x \in \mathcal{C}}\<R(x), \left(\frac{1}{K}\sum\limits_{k=0}^{K-1} u^k \right) - x> 
		&\leq \frac{2L_p \sqn{x^0 - x}}{K}.
	\end{align*}
\end{proof}
Using \eqref{ae:recVIm}, we get 
\begin{align*}
    \sup_{x \in \mathcal{C}}\<R(x), \left(\frac{1}{K}\sum\limits_{k=0}^{K-1} u^k \right) - x>  \leq \varepsilon
\end{align*}
after 
\begin{align*}
    T = \frac{2L_p }{\varepsilon}\|x^0 - x^*\|^2
\end{align*}
iterations of \Cref{ae:algVIm}. This is what Theorem \ref{th10} is about.

\newpage 

\section{Experiment details}

The numerical experiments are run on a machine with 8 Intel Core(TM) i7-9700KF 3.60GHz CPU cores with 64GB RAM. The methods are implemented in Python 3.7 using NumPy and SciPy.


In this section, we estimate the smoothness, strong convexity as well as the similarity parameters for objective \eqref{eq:regr}. We denote the identity matrix as $I$ (with the sizes determined by the context). Given a set of data points $X = (x_1\ldots x_N)^\top\in\R^{N\times d}$ and an associated set of labels $y = (y_1\ldots y_N)^\top\in\R^N$, the Linear Regression problem \eqref{eq:regr} is
\begin{align*}
    \min_{\norm{w}}  g(w) := \frac{1}{2N} \sum\limits_{i=1}^N (w^T x_i - y_i)^2 + \frac{\lambda}{2} \| w\|^2.
\end{align*}
Equivalently, $g(w)$ can be expressed as
\begin{align*}
    g(w) = \frac{1}{2N} \norm{Xw - y}^2 + \frac{\lambda}{2}\sqn{w},
\end{align*}
and its gradient writes as
\begin{align*}
    \nabla g(w)
    &= \frac{1}{N} \cbraces{X^\top Xw - X^\top y } \lambda w.
\end{align*}
The Hessian of $g(w)$ is
\begin{align*}
    \nabla^2 g(w) &= \frac{1}{N} X^\top X + \lambda I.
\end{align*}
We are now ready to estimate the spectrum of the Hessian
\begin{align*}
    \norm{\nabla^2 g(w) v} 
    &\leq \frac{1}{N}\lambda_{\max}(X^\top X) \norm{v}+ \lambda\norm{v} \\
    &\leq \cbraces{\frac{1}{N} \lambda_{\max}(X^\top X) + \lambda} \cdot \norm{v} =: L_g \norm{v}.
\end{align*}
Therefore, we can estimate the Lipschitz constant of $\nabla g(w)$ as $L_g$. The same way we can estimate all $L_i$ and take final $L= \max(L_g, L_1, \ldots, L_n)$.

Let us discuss the bound on the similarity parameter. Given two datasets $\braces{X\in\R^{N\times d},~ y\in\R^N}$ and $\braces{\widetilde X\in\R^{\widetilde N\times d},~ \widetilde y\in\R^{\widetilde N}}$, we define
\begin{align*}
    \widetilde g(w) = \frac{1}{2\widetilde N} \norm{\widetilde Xw - \tilde y}^2 + \frac{\lambda}{2}\sqn{w}.
\end{align*}
And then the similarity coefficient $\delta^{g, \widetilde g}$ between functions $g$ and $\widetilde g$ is
\begin{align*}
    \delta^{g, \widetilde g}&= \lambda_{\max}\cbraces{\frac{1}{N} X^\top X - \frac{1}{\widetilde N} \widetilde X^\top \widetilde X}.
\end{align*}
Hence, we can take $\delta = \max(\delta^{g, g_1}, \ldots, \delta^{g, g_n})$.

Finally, we estimate the strong convexity parameter as $\mu = \lambda$.

A way of estimating the constants of strongly monotonicity, smoothness and similarity for \eqref{eq:rob-regr} is given in Section C of \cite{beznosikov2021distributed}.

\end{document}